\newtheorem{thm}{Theorem}[section]
\newtheorem{lemma}[thm]{Lemma}
\newtheorem{cor}[thm]{Corollary}
\newtheorem{prop}[thm]{Proposition}
\newtheorem*{thm*}{Theorem}
\newtheorem*{prop*}{Proposition}
\theoremstyle{remark}
\newtheorem{rk}[thm]{Remark}
\newtheorem{constr}[thm]{Construction}
\newtheorem{warn}[thm]{Warning}
\newtheorem{ex}[thm]{Example}
\newtheorem*{claim*}{Claim}
\renewcommand\qedsymbol{$\triangle$}}
\theoremstyle{definition}
\newtheorem{defi}[thm]{Definition}
\numberwithin{equation}{section}
\newcommand{\Ho}{\textup{Ho}}
\newcommand{\cat}[1]{\textbf{\textup{#1}}}
\newcommand{\blank}{{\textup{--}}}
\newcommand{\pr}{{\textup{pr}}}
\newcommand{\Hom}{{\textup{Hom}}}
\newcommand{\id}{{\textup{id}}}
\newcommand{\ev}{{\textup{ev}}}
\newcommand{\im}{\mathop{\textup{im}}}
\newcommand{\incl}{{\textup{incl}}}
\newcommand{\forget}{\mathop{\textup{forget}}\nolimits}
\newcommand{\Fun}{\textup{Fun}}
\newcommand{\Inj}{\textup{Inj}}
\newcommand{\supp}{\mathop{\textup{supp}}\nolimits}
\newcommand{\Ob}{\mathop{\textup{Ob}}}
\newcommand{\zigzag}{\mathord{\text{\raisebox{.8pt}{$\scriptstyle\bullet$}\kern-4pt$\to$\kern-1pt\raisebox{.8pt}{$\scriptstyle\bullet$}\kern-1pt$\gets$\kern-4pt\raisebox{.8pt}{$\scriptstyle\bullet$}}}}
\newcommand{\hq}{\mathord{\textup{/\kern-2.5pt/}}}
\newcommand{\ppo}{\mathbin{\raise.25pt\hbox{\scalebox{.67}{$\square$}}}}
\newcommand{\CMon}{{\textup{CMon}}}
\newcommand{\Cyl}{{\textup{Cyl}}}
\let\phi=\varphi
\let\setminus=\smallsetminus
\title[Parsummable categories vs.~symmetric monoidal Categories]{Parsummable categories as a strictification of symmetric monoidal categories}
\author{Tobias Lenz}
\address{Mathematisches Institut, Rheinische Friedrich-Wilhelms-Universit\"at Bonn, Endenicher Allee 60, 53115 Bonn, Germany \& Max-Planck-Institut für Mathematik, Vivatsgasse 7, 53111 Bonn, Germany}
\email{lenz@math.uni-bonn.de}
\subjclass[2010]{Primary 18D10, 
18D35,  
Secondary 19D23, 
18G55
}
\keywords{Symmetric monoidal categories, parsummable categories, strictification, global algebraic $K$-theory}
\begin{document}
\maketitle

\begin{abstract}
We prove that the homotopy theory of \emph{parsummable categories} (as defined by Schwede) with respect to the underlying equivalences of categories is equivalent to the usual homotopy theory of symmetric monoidal categories.
In particular, this yields a model of symmetric monoidal categories in terms of categories equipped with a strictly commutative, associative, and unital (but only partially defined) operation.
\end{abstract}

\section*{Introduction}
The study of \emph{monoidal categories} and \emph{symmetric monoidal categories} was pioneered by Bénabou \cite{cat-avec-mult} and Mac Lane \cite{nat-assoc-com}. While it is necessary in order to cover examples arising in nature that unitality, associativity (and commutativity where applicable) are only required up to specified and coherent isomorphism, keeping track of all these isomorphisms in calculations can be quite a hassle, and in practice one often suppresses the associativity and unitality isomorphisms. This is (at least partially) justified by Mac Lane's Coherence Theorems \cite[Theorem~3.1 and Theorem~4.2]{nat-assoc-com}, roughly saying that any `formal' diagram in any (symmetric) monoidal category only involving associativity, unitality (and symmetry) isomorphisms commutes.

A different interpretation of the Coherence Theorem for monoidal categories is Mac Lane's result \cite[Theorem~XI.3.1]{cat-working} that any monoidal category is strongly monoidally equivalent to a \emph{strict} monoidal category, i.e.~one in which the associativity and unitality isomorphisms are the respective identity transformations---needless to say, working with a strict monoidal category is often significantly easier than working with a general monoidal one.

It is however \emph{not} true that any symmetric monoidal category $\mathscr C$ can be replaced by one that has all associativity, unitality, and symmetry isomorphisms given by the respective identities. Instead, the appropriate strict notion turns out to be that of a \emph{permutative category}, i.e.~one in which associativity and unitality are strict, but the symmetry isomorphism is allowed to be non-trivial.

The goal of the present article is to prove that there is in fact still a model for symmetric monoidal categories in terms of categories with a strictly unital, associative \emph{and commutative} operation $+$. However, as the na\"ive strictification result for symmetric monoidal categories is not true, we of course have to pay a price for this: the toll of the present approach is that not all pairs of objects can be summed anymore, i.e.~that $+$ is only defined on a certain (full and essentially wide) subcategory of the cartesian product.

More precisely, we will consider \emph{parsummable categories} as defined by Schwede \cite{schwede-k-theory} as input for his construction of global algebraic $K$-theory. There is an explicit functor $\Phi\colon\cat{PermCat}\to\cat{ParSumCat}$ from permutative categories to parsummable categories, that is used to define the global algebraic $K$-theory of permutative and more generally symmetric monoidal categories. Our main result can then be stated as follows:

\begin{thm*}[see Theorem~\ref{thm:main-thm}]
The functor $\Phi$ is a homotopy equivalence with respect to the underlying equivalences of categories on both sides.
\end{thm*}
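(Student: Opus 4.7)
My plan is to construct a candidate homotopy inverse $\Psi\colon\cat{ParSumCat}\to\cat{PermCat}$ of $\Phi$ and to exhibit natural zigzags of underlying equivalences of categories witnessing $\Psi\Phi\simeq\id$ and $\Phi\Psi\simeq\id$. The first composite should land on the identity up to a canonical natural isomorphism, so the substantive content lies in analyzing $\Phi\Psi$.

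\textbf{The candidate inverse and the easy composite.} For $\Psi$ the natural candidate is to send a parsummable category $\mathscr C$ to the full subcategory $\mathscr C_{\{0\}}$ of objects supported at the singleton $\{0\}\subset\omega$. A permutative structure on $\mathscr C_{\{0\}}$ should be cooked up from the partially defined parsum as follows: given $x,y\in\mathscr C_{\{0\}}$, first shift $y$ along the injection $\{0\}\hookrightarrow\{1\}$ to disjoinify supports, form the (now well-defined) partial sum over $\{0,1\}$, and then transport back along a chosen bijection $\{0,1\}\cong\{0\}$. Strict associativity and unitality should be immediate from the parsummable axioms, while the symmetry isomorphism is picked up by comparing the two bijections $\{0,1\}\cong\{0\}$ (equivalently, by applying the action of the non-trivial swap on $\{0,1\}$). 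Unwinding the explicit description of $\Phi$ should then identify $\Psi\Phi(\mathscr P)$ with $\mathscr P$ on the nose, or at worst up to a canonical isomorphism of permutative categories.

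\textbf{The hard composite and main obstacle.} The crux is to produce, natural in a parsummable category $\mathscr C$, a (zigzag of) underlying equivalence(s) between $\Phi\Psi(\mathscr C)$ and $\mathscr C$. The natural candidate map $\Phi\Psi(\mathscr C)\to\mathscr C$ sends a tuple $(x_a)_{a\in A}$ of singleton-supported objects to the partial sum of their shifts to positions $a\in A$. Showing that this is an equivalence of underlying categories requires, in particular, the statement that every object $c\in\mathscr C$ is isomorphic (in the underlying category) to a partial sum of singleton-supported objects. I expect this essential surjectivity to be the main obstacle: arbitrary objects of $\mathscr C$ may carry unwieldy supports and their $\mathscr I$-action need not come from a tuple structure. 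The plan is to leverage the contractibility of configuration-like diagrams associated to injections (morally, that $E\symm_\infty$ is contractible and that the $\mathscr I$-action on any parsummable category is homotopically free) to rigidify $c$ to a normal form. Fully faithfulness should then follow by a direct morphism analysis once the decomposition result is in hand.

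\textbf{Tools and possible simplifications.} To avoid hand-crafting a zigzag, I would look for a clean homotopical framework: either compatible model/relative-category structures on both sides so the comparison becomes a Quillen equivalence, or a factorization of $\Phi$ through a well-understood intermediate model of symmetric monoidal categories (e.g.~$\Gamma$-categories or Segal categories) so that the classical Mac~Lane--Isbell strictification for permutative categories can be imported as a black box and only the parsummable side needs detailed attention. In either route, the essential surjectivity and decomposition argument sketched above remains the technical heart.
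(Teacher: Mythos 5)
Your proposed homotopy inverse $\Psi$ does not exist: the tensor product you describe on $\mathscr C_{\{0\}}$ cannot be defined. There is no bijection $\{0,1\}\cong\{0\}$ (the sets have different sizes), and this is not a repairable slip --- the $\mathcal M$-action on a parsummable category can never shrink supports, since $\supp(u_*X)=u(\supp X)$ with $u$ injective. So if $x,y$ are both genuinely supported on $\{0\}$, shifting $y$ to $\{1\}$ and summing produces an object with support $\{0,1\}$, and no element of $\mathcal M$ transports it back into $\mathscr C_{\{0\}}$. Concretely, in $\Phi(\mathscr P)$ the objects $(X,\bm 1,\bm 1,\dots)$ and $(\bm 1,Y,\bm 1,\dots)$ sum to $(X,Y,\bm 1,\dots)$, which is not $\mathcal M$-isomorphic to any singleton-supported object unless $X=\bm 1$ or $Y=\bm 1$. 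The construction you are reaching for --- a monoidal product $\mu_*\colon\mathcal C^{\times 2}\to\mathcal C$ from an injection $\mu\colon\bm 2\times\omega\rightarrowtail\omega$ --- does give a symmetric monoidal structure on the \emph{whole} underlying category of $\mathcal C$, but it is not strictly associative or unital and does not restrict to a permutative structure on any subcategory of bounded support.

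The paper sidesteps this obstruction by not trying to internalize the tensor product at fixed support. Its functor $\Sigma$ sends $\mathcal C$ to the category whose objects are finite tuples $(X_1,\dots,X_m)$ in $\mathcal C$ (a Mac Lane--style strictification, with concatenation as strictly associative tensor product), and whose morphisms are equivalence classes $[\psi,f,\phi]$ ranging over \emph{all} choices of injections $\phi,\psi$ at once. Your ``sum of shifts'' map $\Phi\Psi(\mathscr C)\to\mathscr C$ is morally the paper's functor $S_{\mathcal C}\colon\Theta(\mathcal C)\to\mathcal C$, but there is a second gap your plan glosses over: any such comparison depends on a choice of injection placing the summands, and is therefore only sum-preserving, \emph{not} $E\mathcal M$-equivariant. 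It is not a morphism of parsummable categories, and the paper needs a mapping-cylinder factorization (Construction~\ref{constr:cyl} and Corollary~\ref{cor:cylinder-factorization}) to replace it by a genuine zigzag in $\cat{ParSumCat}$. Even with a workable $\Psi$, you would have to confront this equivariance issue explicitly.
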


While this result is in line with previous work on strict models for symmetric monoidal categories (that we will recall below), our motivation mostly comes from global algebraic $K$-theory. Namely, while the underlying equivalences of categories in $\cat{ParSumCat}$ are usually not preserved by the global algebraic $K$-theory functor $\textbf{K}_{\textup{gl}}$, it does indeed send underlying equivalences between so-called \emph{saturated} parsummable categories to weak equivalences of global spectra. Using this, the above theorem is then one step (of many) in our proof that global algebraic $K$-theory expresses connective global stable homotopy theory as $\infty$-categorical localizations of both $\cat{PermCat}$ and $\cat{ParSumCat}$ \cite{g-global,sym-mon-global}, generalizing a classical non-equivariant result due to Thomason \cite{thomason}.

\subsection*{Related work}
Ours is by far not the first strict model for `coherently commutative' monoids, and while the proof of the above theorem will be almost entirely self-contained, the underlying construction is closely related to previous work.

In the topological setting (or, more precisely, in the context of the category $\cat{SSet}$ of simplicial sets), Sagave and Schlichtkrull \cite{sagave-schlichtkrull} have shown that commutative monoids in $\cat{$\bm I$-SSet}\mathrel{:=}\Fun(I,\cat{SSet})$ with respect to the \emph{box product} $\boxtimes$ model $E_\infty$-algebras in $\cat{SSet}$. Here $I$ is the category of finite sets and injections.

If we write $\mathcal M$ for the monoid of injective self-maps of $\omega=\{1,2,\dots\}$, then there is a natural functor $\ev_\omega\colon\cat{$\bm I$-SSet}\to\cat{$\bm{\mathcal M}$-SSet}$ to simplicial sets with an $\mathcal M$-action, given by `evaluating at infinity.' Sagave and Schwede show in \cite{I-vs-M-1-cat} that this is a homotopy equivalence (with respect to suitable notions of weak equivalence) when one restricts the target to the full subcategory $\cat{$\bm{\mathcal M}$-SSet}^\tau\subset\cat{$\bm{\mathcal M}$-SSet}$ spanned by those $\mathcal M$-simplicial sets that satisfy an additional condition called \emph{tameness}. On $\cat{$\bm{\mathcal M}$-SSet}^\tau$ they again construct a symmetric monoidal box product $\boxtimes$, and $\ev_\omega$ is in fact strong symmetric monoidal with respect to this. The proof of \cite[Theorem~5.13]{I-vs-M-1-cat} then shows that $\ev_\omega$ also induces an equivalence of the corresponding homotopy theories of commutative monoids. Parsummable categories, finally, can be viewed as a categorical version of the commutative $\boxtimes$-monoids in $\cat{$\bm{\mathcal M}$-SSet}^\tau$.

In the categorical context, the work of Kodjabachev and Sagave \cite{joyal-strict} shows that for any $E_\infty$-operad $\mathscr O$ in the Joyal model structure on \cat{SSet} (modelling quasi-categories), the inclusion of the strictly commutative monoids in $\cat{$\bm I$-SSet}$ into the $\mathscr O$-algebras is a homotopy equivalence (with respect to the weak equivalences of the Joyal model structure). This ought to give the corresponding result for ordinary categories via appropriate (Bousfield) localization on both sides.

Moreover, Solberg constructed an explicit functor
\begin{equation}\tag{\ensuremath{*}}\label{eq:Phi-solberg}
\Phi\colon\cat{PermCat}\to\cat{$\bm I$-Cat}.
\end{equation}
If we take the above $1$-categorical version of the Kodjabachev-Sagave result for granted, \cite[discussion before Theorem~7.1]{solberg-schlichtkrull} shows that $(\ref{eq:Phi-solberg})$ is a homotopy equivalence. On the other hand, the composition
\begin{equation*}
\cat{PermCat}\xrightarrow{\Phi}\CMon(\cat{$\bm I$-Cat})\xrightarrow{\ev_\omega}\CMon(\cat{$\bm{\mathcal M}$-Cat}^\tau)
\end{equation*}
is closely related to our functor $\Phi$, see \cite[Remark~11.3]{schwede-k-theory}.

Finally, \cite[Remark~4.20]{schwede-k-theory} sketches how parsummable categories can be identified with `tame' algebras over a specific $E_\infty$-operad $\mathcal I$ in $\cat{Cat}$. On the other hand, permutative categories can be identified with algebras over the `categorical Barratt-Eccles operad' $E\Sigma_*$, and the general theory then implies that the homotopy theory of not necessarily tame $\mathcal I$-algebras is equivalent to the one of permutative categories. However, this equivalence is very inexplicit, and it is moreover not clear how we can deduce the corresponding statement for \emph{tame} $\mathcal I$-algebras from this.

\subsection*{Outline}
So while all of the above results give strong evidence for our main theorem, they either work in a different context or there would still be a significant amount of work left to do in order to prove the theorem along the suggested route.

In the present article, we will therefore proceed by different and more elementary means. Namely, we will give an explicit functor $\Sigma\colon\cat{ParSumCat}\to\cat{PermCat}$ and then exhibit concrete homotopies between the two composites and the respective identities. This is organized as follows:

In Section~\ref{sec:background} we recall basic facts about parsummable categories, permutative categories, as well as the construction of the functor $\Phi$.

Section~\ref{sec:sigma-construction} is devoted to the construction of $\Sigma$ and the proof that it is left homotopy inverse to $\Phi$. In Section~\ref{sec:main-thm} we will then complete the proof of the main theorem by showing that $\Sigma$ is also right homotopy inverse to $\Phi$.

Finally, Section~\ref{sec:outlook} is concerned with some aspects of the behaviour of $\Phi$ with respect to a different notion of weak equivalence that appears in the study of global algebraic $K$-theory.

\subsection*{Acknowledgements}
The results in this article were obtained as part of my ongoing PhD thesis at the University of Bonn. I would like to thank my advisor Stefan Schwede for suggesting this project and for several helpful remarks on a previous version of this article. I would moreover like to thank the anonymous referee for a very careful reading and for their helpful questions that lead to a major simplification of the argument in Section~\ref{sec:main-thm}.

I am grateful to the Max Planck Institute for Mathematics in Bonn for their hospitality and support.

The author is an associate member of the Hausdorff Center for Mathematics, funded by the Deutsche Forschungsgemeinschaft (DFG, German Research Foundation) under Germany's Excellence Strategy (GZ 2047/1, project ID390685813).

\section{Background}\label{sec:background}
In this section we recall the basics about parsummable categories on the one hand and about permutative categories on the other hand.

\subsection{Parsummable categories}
For the reader's convenience, we briefly recall the notion of a \emph{parsummable category}; a detailed treatment can be found in \cite[Sections~2~and~4]{schwede-k-theory}, which we follow rather closely.

We write $\omega\mathrel{:=}\{1,2,\dots\}$ and we denote the monoid of injective self maps of $\omega$ by $\mathcal M$. We moreover recall the functor $E\colon\cat{Set}\to\cat{Cat}$ that is right adjoint to $\Ob\colon\cat{Cat}\to\cat{Set}$. Explicitly, $ES$ is the `chaotic category' with set of objects $S$, i.e.~there is a unique morphism $x\to y$ for any $x,y\in S$; we denote this morphism by $(y,x)$.

In particular, we can apply this to $\mathcal M$, yielding a category $E\mathcal M$. There is a unique way to extend the multiplication $\mathcal M\times\mathcal M\to\mathcal M$ to $E\mathcal M$ and this way $E\mathcal M$ becomes a monoid in the $1$-category $\cat{Cat}$, i.e.~a strict monoidal category.

\begin{warn}
In our main reference \cite{schwede-k-theory}, Schwede defines $\omega\mathrel{:=}\{0,1,2,\dots\}$, but the above definition, used for example in \cite{schwede-semistable} and \cite{I-vs-M-1-cat}, will be more convenient here. From a purely formal point of view, the distinction between the two conventions is of course irrelevant.

We also remark that the above monoid $\mathcal M$ is more commonly denoted by $M$ in the literature, whereas \cite{schwede-k-theory} uses $\mathcal M$ for what we call $E\mathcal M$. We think however that $\mathcal M$ is important enough that it deserves to be notationally distinguished from a generic monoid.
\end{warn}

An \emph{$E\mathcal M$-category} is simply a category with a \emph{strict} $E\mathcal M$-action; we denote $E\mathcal M$-categories by the calligraphic letters $\mathcal C,\mathcal D$, and so on. If $\mathcal C$ is any $E\mathcal M$-category, then we write $u_*\colon\mathcal C\to\mathcal C$ for the action of $u\in\mathcal M$, and if $v\in\mathcal M$ is any other injection, then we write $[v,u]\colon u_*\Rightarrow v_*$ for the natural isomorphism given by the action of the morphism $(v,u)$ of $E\mathcal M$. Thus, we in particular get an $\mathcal M$-action on $\Ob(\mathcal C)$ and for any $X\in\mathcal C$ and $u\in\mathcal M$ an isomorphism
\begin{equation*}
u^X_\circ\mathrel{:=}[u,1]_X\colon X\to u_*X
\end{equation*}
(here we follow the usual convention to denote the components of a natural transformation like $[u,1]$ by lower indices, whereas \cite{schwede-k-theory} uses upper indices for the components of the structure isomorphisms).
Functoriality together with the action property then implies the relation
\begin{equation}\label{eq:u-circ-relation}
(uv)^{X}_\circ = u^{v_*X}_\circ v^X_\circ\colon X\to u_*v_*X=(uv)_*X
\end{equation}
as morphisms in $\mathcal C$. The following convenient lemma tells us conversely, that this data is enough to specify an $E\mathcal M$-action:

\begin{lemma}\label{lemma:EM-action-concise}
Let $\mathcal C$ be a category together with an $\mathcal M$-action on $\Ob\mathcal C$ and for any $X\in\mathcal C$ and $u\in\mathcal M$ an isomorphism $u^X_\circ\colon X\to u_*X$ such that the relation $(\ref{eq:u-circ-relation})$ holds for any further injection $v\in\mathcal M$.

Then there exists a unique $E\mathcal M$-action on $\mathcal C$ such that the underlying $\mathcal M$-action on $\Ob(\mathcal C)$ is the given one and such that $[u,1]_X=u^X_\circ$ for all $u\in\mathcal M,X\in\mathcal C$.
\begin{proof}
This is \cite[Proposition~2.6]{schwede-k-theory}. For later reference we briefly recall how one can recover the remaining structure: if $u\in\mathcal M$ is any injection and $f\colon X\to Y$ is a morphism in $\mathcal C$, then $u_*(f)= u^Y_\circ\circ f\circ (u^X_\circ)^{-1}$. Moreover,
\begin{equation*}
[v,u]_X= v^X_\circ\circ (u^X_\circ)^{-1}\colon u_*X\to v_*X
\end{equation*}
for any further injection $v\in\mathcal M$.
\end{proof}
\end{lemma}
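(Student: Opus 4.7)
The plan is to handle uniqueness and existence separately, with the cocycle identity \eqref{eq:u-circ-relation} doing the bulk of the work in both directions.

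For uniqueness, I would observe that any morphism $(v,u)\colon u\to v$ in $E\mathcal M$ factors as $(v,1)\circ(1,u)$, with $(1,u)=(u,1)^{-1}$. So if an $E\mathcal M$-action extending the given data exists, the natural transformation $[v,u]$ is forced to be $[v,1]\circ[u,1]^{-1}$, giving the prescribed formula $[v,u]_X=v^X_\circ\circ (u^X_\circ)^{-1}$. Likewise, each $u_*$ must be a functor and $u^\bullet_\circ\colon\id\Rightarrow u_*$ must be natural, forcing $u_*(f)=u^Y_\circ\circ f\circ(u^X_\circ)^{-1}$ for any $f\colon X\to Y$. This pins down the entire action.

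For existence, I would \emph{define} $u_*$ on morphisms by the above conjugation formula and $[v,u]_X$ by the above composite, and then verify the axioms in order. First, $u_*$ is a functor, because $u^\bullet_\circ$ is a collection of isomorphisms and conjugation by these automatically preserves identities and composition. Second, one needs $(uv)_*=u_*\circ v_*$ as functors and $1_*=\id$; on objects this is true by assumption on the $\mathcal M$-action on $\Ob\mathcal C$, and on morphisms it comes directly from the cocycle relation \eqref{eq:u-circ-relation}, which rewrites $(uv)^X_\circ$ as $u^{v_*X}_\circ\circ v^X_\circ$ so that conjugating twice by the latter composes to conjugation by the former. Third, $[v,u]$ is natural from $u_*$ to $v_*$: naturality at $f\colon X\to Y$ amounts to the identity
\begin{equation*}
(v^Y_\circ(u^Y_\circ)^{-1})\circ(u^Y_\circ f(u^X_\circ)^{-1})=(v^Y_\circ f(v^X_\circ)^{-1})\circ(v^X_\circ(u^X_\circ)^{-1}),
\end{equation*}
which is just cancellation of inverses. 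Finally, I would check that the assignment $(v,u)\mapsto[v,u]$ is functorial in $E\mathcal M$, i.e.\ $[w,v]\circ[v,u]=[w,u]$ and $[u,u]=\id_{u_*}$, both of which are immediate from the definition. The compatibility with the monoidal structure of $E\mathcal M$ (strictness of the action with respect to composition in $\mathcal M$) again reduces to the cocycle relation.

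There is no real obstacle here beyond organized bookkeeping; the only mildly subtle point is checking the strict action property $(uv)_*=u_*v_*$ at the level of morphisms, where one must make sure to apply \eqref{eq:u-circ-relation} at both the source and the target of $f$ and verify that the resulting conjugations agree on the nose, not merely up to the canonical isomorphism. Once this step is in place, everything else is a short diagram chase.
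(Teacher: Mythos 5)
Your proposal is correct and follows essentially the same approach as the cited reference: the two formulas you derive for $u_*(f)$ and $[v,u]_X$ are exactly the ones the paper recalls from \cite[Proposition~2.6]{schwede-k-theory}, and both uniqueness (via $(v,u)=(v,1)\circ(1,u)$ and naturality) and existence (via conjugation and the cocycle relation $(\ref{eq:u-circ-relation})$, which also yields $1^X_\circ=\id_X$ by taking $u=v=1$) go through as you describe.
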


A \emph{morphism of $E\mathcal M$-categories} is simply a strictly $E\mathcal M$-equivariant functor, i.e.~a functor $F\colon\mathcal C\to\mathcal D$ such that the diagram
\begin{equation*}
\begin{tikzcd}[column sep=large]
E\mathcal M\times\mathcal C\arrow[d, "\text{act}"']\arrow[r, "E\mathcal M\times F"] & E\mathcal M\times\mathcal D\arrow[d, "\text{act}"]\\
\mathcal C\arrow[r, "F"'] &\mathcal D
\end{tikzcd}
\end{equation*}
commutes \emph{strictly}. We denote the category of small $E\mathcal M$-categories by $\cat{$\bm{E\mathcal M}$-Cat}$.

The following criterion will be used frequently to check that a given functor is $E\mathcal M$-equivariant:

\begin{cor}
Let $\mathcal C,\mathcal D$ be $E\mathcal M$-categories. Then a functor $F\colon\mathcal C\to\mathcal D$ of their underlying categories is $E\mathcal M$-equivariant if and only if $\Ob(F)\colon\Ob(\mathcal C)\to\Ob(\mathcal D)$ is $\mathcal M$-equivariant and the relation
\begin{equation*}
F(u^X_\circ)= u^{F(X)}_\circ\colon F(X)\to u_*(FX)=F(u_*X)
\end{equation*}
holds for all $X\in\mathcal C$ and $u\in\mathcal M$.
\begin{proof}
This is immediate from the description of the functors $u_*$ and the natural isomorphisms $[v,u]$ given in the proof of the previous lemma.
\end{proof}
\end{cor}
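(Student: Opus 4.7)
The plan is to unwind the definition of strict $E\mathcal M$-equivariance of a functor $F\colon\mathcal C\to\mathcal D$, which amounts to two conditions: (a) $F$ commutes with the action on objects, i.e.\ $F(u_*X)=u_*F(X)$ for all $u\in\mathcal M$, $X\in\mathcal C$; and (b) $F$ commutes with the action on morphisms, which itself splits into commuting with the endofunctors $u_*$ (applied to morphisms of $\mathcal C$) and commuting with the structure isomorphisms $[v,u]_X\colon u_*X\to v_*X$ for all $u,v\in\mathcal M$ and $X\in\mathcal C$.

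The forward direction is essentially formal. If $F$ is strictly $E\mathcal M$-equivariant, then condition (a) is precisely $\mathcal M$-equivariance of $\Ob(F)$, while applying equivariance to the morphism $(u,1)$ of $E\mathcal M$ yields $F(u^X_\circ)=F([u,1]_X)=[u,1]_{FX}=u^{FX}_\circ$, so the named relation holds.

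For the converse, I would plug the given hypotheses into the explicit formulas recorded in the proof of Lemma~\ref{lemma:EM-action-concise}. Condition (a) is the assumption that $\Ob(F)$ is $\mathcal M$-equivariant. For the action on morphisms, given $f\colon X\to Y$ in $\mathcal C$, functoriality of $F$ together with $F(u^X_\circ)=u^{FX}_\circ$ gives
\begin{equation*}
F(u_*f)=F\bigl(u^Y_\circ\circ f\circ(u^X_\circ)^{-1}\bigr)=u^{FY}_\circ\circ F(f)\circ(u^{FX}_\circ)^{-1}=u_*F(f),
\end{equation*}
and likewise for any second injection $v\in\mathcal M$,
\begin{equation*}
F([v,u]_X)=F\bigl(v^X_\circ\circ(u^X_\circ)^{-1}\bigr)=v^{FX}_\circ\circ(u^{FX}_\circ)^{-1}=[v,u]_{FX}.
\end{equation*}

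Since every morphism in $E\mathcal M\times\mathcal C$ factors as a product of one of these two types (and strict equivariance of a functor out of a product category reduces to compatibility with each factor), this verifies commutativity of the square defining strict equivariance. I do not anticipate any real obstacle here—the content is entirely encapsulated in the previous lemma, and the only care needed is to be explicit about which pieces of the $E\mathcal M$-action one has to check separately.
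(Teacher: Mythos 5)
Your proof is correct and follows exactly the route the paper intends: it unwinds the formulas $u_*(f)=u^Y_\circ\circ f\circ(u^X_\circ)^{-1}$ and $[v,u]_X=v^X_\circ\circ(u^X_\circ)^{-1}$ from the proof of Lemma~\ref{lemma:EM-action-concise} and checks equivariance on objects, on the endofunctors $u_*$, and on the structure isomorphisms $[v,u]$. The paper's proof is just a one-line reference to that description, so yours is the same argument spelled out.
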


\begin{rk}
It is a non-trivial fact (crucially depending on the structure of the monoid $\mathcal M$) that a functor is $E\mathcal M$-equivariant if and only if it is $\mathcal M$-equivariant. However, since we will construct all our $E\mathcal M$-actions via Lemma~\ref{lemma:EM-action-concise}, it will be more natural to check the condition from the corollary than to unravel the definition of $u_*$ on morphisms. Accordingly, we will have no use for this alternative criterion and we leave its proof to the curious reader.
\end{rk}

Now we can introduce the notion of the \emph{support} of an object $X$ in an $E\mathcal M$-category:

\begin{defi}
Let $\mathcal C$ be an $E\mathcal M$-category, let $X\in\mathcal C$, and let $A\subset\omega$ be any finite set. Then we say that \emph{$X$ is supported on $A$} if $u_*X=X$ for all $u\in\mathcal M$ fixing $A$ pointwise (i.e.~such that $u(a)=a$ for all $a\in A$).

The object $X$ is said to be \emph{finitely supported} if it is supported on some finite set $A$. In this case, its \emph{support} $\supp(X)$ is defined to be the intersection of all finite sets on which it is supported.

An $E\mathcal M$-category $\mathcal C$ is called \emph{tame} if all $X\in\mathcal C$ are finitely supported. We write $\cat{$\bm{E\mathcal M}$-Cat}^\tau\subset\cat{$\bm{E\mathcal M}$-Cat}$ for the full subcategory spanned by the tame (small) $E\mathcal M$-categories.
\end{defi}

\begin{ex}
An object $X\in\mathcal C$ is supported on the empty set if and only if it is an $\mathcal M$-fixed point.
\end{ex}

We now recall some basic properties of the support that we will use without further reference:

\begin{lemma}\label{lemma:support-change}
Let $\mathcal C$ be a tame $E\mathcal M$-category and let $X\in\mathcal C$.
\begin{enumerate}
\item $X$ is supported on $\supp(X)$.
\item If $u\in\mathcal M$ is any injection, then $\supp(u_*X)=u(\supp(X))$.\label{item:sc-action}
\item If $F\colon\mathcal C\to\mathcal D$ is a map of tame $E\mathcal M$-categories, then $\supp(F(X))\subset\supp(X)$.
\item Let $u,v,u',v'\in\mathcal M$ such that $u$ agrees with $u'$ on $\supp(X)$ and $v$ agrees with $v'$ on $\supp(X)$. Then $[v',u']_X=[v,u]_X$. In particular, $u^X_\circ=\id$ whenever $u$ fixes $\supp(X)$ pointwise.
\end{enumerate}
\begin{proof}
These are part of \cite[Proposition~2.13]{schwede-k-theory}.
\end{proof}
\end{lemma}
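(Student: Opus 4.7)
My plan is to treat the four parts in order of increasing difficulty, flagging where the main combinatorial and structural inputs enter.

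Part~(3) is essentially formal: if $u \in \mathcal M$ fixes $\supp(X)$ pointwise, then $u_*X=X$, so strict $E\mathcal M$-equivariance of $F$ gives $u_*F(X)=F(u_*X)=F(X)$; this makes $\supp(X)$ a supporting set for $F(X)$, forcing $\supp(F(X))\subset\supp(X)$. Part~(2) splits into two inclusions. For $\supp(u_*X)\subset u(\supp(X))$, given $v\in\mathcal M$ fixing $u(\supp(X))$ pointwise, I would produce $w\in\mathcal M$ fixing $\supp(X)$ pointwise with $vu=uw$, using that $u$ restricts to a bijection $\supp(X)\to u(\supp(X))$ and that the infinitude of $\omega$ lets us prescribe $w$ freely on $\omega\setminus\supp(X)$; then $v_*u_*X=u_*w_*X=u_*X$. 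The reverse inclusion $u(\supp(X))\subset\supp(u_*X)$ is obtained symmetrically, by applying the first inclusion to a partial left inverse of $u$ on the relevant finite set.

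Part~(1) reduces to the key combinatorial statement that the family of finite supporting sets of $X$ is closed under intersection. Given $A,B$ both supporting $X$ and $u\in\mathcal M$ fixing $A\cap B$ pointwise, one factors $u=u_1 u_2$ in $\mathcal M$ with $u_1$ fixing $A$ and $u_2$ fixing $B$. Such a factorisation exists because $\omega$ is infinite: one first chooses $u_2$ fixing $B$ pointwise and routing $\omega\setminus B$ into a region disjoint from $A$, and then sets $u_1$ to agree with $u\circ u_2^{-1}$ on $u_2(\omega)$ and to fix $A$ pointwise elsewhere. Granted this, $u_*X=(u_1)_*(u_2)_*X=(u_1)_*X=X$, so $X$ is supported on $A\cap B$ and, by finite iteration, on $\supp(X)$ itself.

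Part~(4) is structurally the deepest and I expect the main obstacle. By Lemma~\ref{lemma:EM-action-concise} we have $[v',u']_X=(v')^X_\circ\circ((u')^X_\circ)^{-1}$, so the claim reduces to $u^X_\circ=(u')^X_\circ$ whenever $u,u'$ agree on $\supp(X)$; using the action relation $(\ref{eq:u-circ-relation})$ one reduces further to showing $u^X_\circ=\id_X$ whenever $u$ fixes $\supp(X)$ pointwise. By (1) we already have $u_*X=X$ on objects, but promoting this equality to triviality of the structure isomorphism requires a genuine input beyond the underlying $\mathcal M$-action. My plan is to factor $u$ through bijections of $\omega$ that restrict to the identity on $\supp(X)$ and then exploit the naturality of the transformations $[v,u]$ together with the fact that the chaotic-category structure of $E\mathcal M$ makes these isomorphisms depend only on the relevant restrictions; this collapsing step is the only place where the full $E\mathcal M$-structure (as opposed to just the $\mathcal M$-action on objects) genuinely enters, and it is where I expect the proof to require the most care.
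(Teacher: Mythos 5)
Your plans for parts~(1) and~(2) both rest on factorisation claims that fail in general. For~(2), an injection $w$ fixing $\supp(X)$ pointwise with $vu=uw$ would require $v(u(x))\in\im(u)$ for every $x$, but $v$ is only constrained on the finite set $u(\supp(X))$ and can freely move other elements of $\im(u)$ out of $\im(u)$. Concretely, take $\supp(X)=\{1\}$, $u(n)=2n$, and $v=(1\;4)$: then $v$ fixes $u(\supp(X))=\{2\}$ pointwise, yet $v(u(2))=1$ is odd and cannot equal $u(w(2))$. For~(1), the factorisation $u=u_1u_2$ with $u_1|_A=\id$, $u_2|_B=\id$ simply does not exist: with $A=\{1\}$, $B=\{2\}$, $u=(1\;2)$, one would need $u_1(2)=u_1(u_2(2))=u(2)=1=u_1(1)$, contradicting injectivity, and your routing of $\omega\setminus B$ does not help because $u_1$ is already forced on $B\subset\im(u_2)$. (The true statement requires more factors, e.g.~$(1\;2)=(2\;3)(1\;3)(2\;3)$ alternates between fixing $A$ and fixing $B$ three times.) Both gaps are repaired by the same auxiliary claim, which you never isolate: if two injections agree on a finite supporting set $A$ of $X$, they act identically on $X$. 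To see it, extend the common restriction $u|_A$ to a bijection $\pi\in\mathcal M$ (possible since $\omega\setminus A$ and $\omega\setminus u(A)$ are both countably infinite); then $\pi^{-1}u$ and $\pi^{-1}u'$ both fix $A$, so $(\pi^{-1}u)_*X=X=(\pi^{-1}u')_*X$, and applying $\pi_*$ gives $u_*X=u'_*X$. Applied with $u'=vu$ (which agrees with $u$ on $A$ whenever $v$ fixes $u(A)$), this gives the first inclusion of~(2); a similar conjugation argument handles~(1).

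Two secondary points: your chosen order has~(3) and~(2) silently invoking~(1) --- the steps $u_*X=X$ and $w_*X=X$ for $u,w$ fixing $\supp(X)$ pointwise \emph{are} Part~(1) --- so~(1) must come first (part~(3) can also be rephrased to avoid this, by arguing with arbitrary supporting sets rather than $\supp(X)$ itself). For~(4), your two reductions are correct, and the remaining step (that $u^X_\circ=\id_X$ when $u$ fixes $\supp(X)$ pointwise) is indeed where the genuine content sits, but your sketch for it is too vague to evaluate. The paper itself only cites Schwede's Proposition~2.13 for this lemma and gives no proof, so there is no in-paper argument to compare against; the issues above are gaps regardless of route.
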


If $\mathcal C,\mathcal D$ are tame $E\mathcal M$-categories, then we define
\begin{equation*}
\mathcal C\boxtimes\mathcal D\subset\mathcal C\times\mathcal D
\end{equation*}
to be the full subcategory spanned by those $(X,Y)$ such that $\supp(X)\cap\supp(Y)=\varnothing$. It is not hard to show that this defines a subfunctor of $\blank\times\blank$ and that this is the monoidal product of a preferred symmetric monoidal structure on $\cat{$\bm{E\mathcal M}$-Cat}^\tau$, see \cite[Proposition~2.34]{schwede-k-theory}.

\begin{defi}
A \emph{parsummable category} is a commutative monoid in the symmetric monoidal category $(\cat{$\bm{E\mathcal M}$-Cat}^\tau,\boxtimes)$. We write $\cat{ParSumCat}$ for the category whose objects are the parsummable categories and whose morphisms are the monoid homomorphisms.
\end{defi}

Explicitly, this means that we are given a small tame $E\mathcal M$-category $\mathcal C$ together with a distinguished object $0$ having empty support and the following data: we are given for all $X,Y\in\mathcal C$ with $\supp(X)\cap\supp(Y)=\varnothing$ a `sum' $X+Y$ and for all $f\colon X\to X',g\colon Y\to Y'$ such that additionally $\supp(X')\cap\supp(Y')=\varnothing$ a `sum' $f+g$. The sum is required to be \emph{strictly} unital, associative, and commutative in the evident sense. Moreover, it is functorial---i.e.~$(f'+g')\circ(f+g)=(f'\circ f)+(g'\circ g)$ whenever this makes sense---and $E\mathcal M$-equivariant in the sense that $u^{X+Y}_\circ = u^X_\circ+u^Y_\circ$, and hence in particular $u_*(X+Y)=u_*(X)+u_*(Y)$.

\begin{rk}
As explained in \cite[discussion after Definition~4.1]{schwede-k-theory} the name `parsummable' is short for `partially summable,' alluding to the fact that the sum $+$ is not defined on all of $\mathcal C\times\mathcal C$. However, \cite[proof of Theorem~2.32]{schwede-k-theory} in particular tells us that the inclusion $\mathcal C\boxtimes\mathcal C\hookrightarrow\mathcal C\times\mathcal C$ is an equivalence of categories: namely, it is fully faithful by definition, and if $(X,Y)\in\mathcal C\times\mathcal C$ is arbitrary, then $(u_\circ^X,v_\circ^Y)$ defines an isomorphism $(X,Y)\cong(u_*X,v_*Y)$ in $\mathcal C\times\mathcal C$ for any $u,v\in\mathcal M$. If we now pick injections $u$ and $v$ with disjoint images, then $(u_*X,v_*Y)$ already lies in $\mathcal C\boxtimes\mathcal C$, so the inclusion is also essentially surjective, hence an equivalence.

Thus, while two given objects $X,Y\in\mathcal C$ might not be summable, we can always replace them by isomorphic $X',Y'\in\mathcal C$ whose sum is defined.
\end{rk}

On parsummable categories, there exists an interesting natural extension of the $E\mathcal M$-action:

\begin{constr}
Let $A$ be a finite set, and let $\phi\colon A\times\omega\rightarrowtail\omega$ be any injection. Then we define
\begin{equation*}
\phi_*\colon\mathcal C^{\times A}\to\mathcal C
\end{equation*}
via $\phi_*\big(X_\bullet)\mathrel{:=}\sum_{a\in A}\phi(a,\blank)_*(X_a)$ for any $X_\bullet=(X_a)_{a\in A}$, and analogously on morphisms. Here we have used that the $\phi(a,\blank)_*(X_a)$ have pairwise disjoint support by Lemma~\ref{lemma:support-change}-$(\ref{item:sc-action})$ and that the sum is strictly associative and commutative (so that we can sum over arbitrary finite index sets).

Moreover, if $\psi\colon A\times\omega\rightarrowtail\omega$ is any other such injection, then we define
\begin{equation*}
[\psi,\phi]_{X_\bullet}\mathrel{:=}\sum_{a\in A}[\psi(a,\blank),\phi(a,\blank)]_{X_a}\colon\phi_*(X_\bullet)\to \psi_*(Y_\bullet).
\end{equation*}
\end{constr}

This appears as \cite[Construction~5.1]{schwede-k-theory} in the special case that $A=\bm{n}\mathrel{:=}\{1,\dots,n\}$, which is not a real restriction because of commutativity. Note that for $A=\textbf1$ the above recovers the original structure isomorphisms in the following sense: if $X\in\mathcal C$ and $u,v\in\mathcal M$ are arbitrary, then $[v,u]_X=[v\circ\pr,u\circ\pr]_{(X)}$, where $\pr\colon\cat1\times\omega\mathrel{\smash{\xrightarrow{\lower3pt\hbox{$\scriptstyle\cong$}}}}\omega$ is the projection, and we write $(X)$ for $X$ considered as a $1$-tuple (i.e.~a $\bm1$-indexed family).

The following two lemmas follow easily from the definitions and we omit their proofs:

\begin{lemma}\label{lemma:generalized-action}
Let $\mathcal C$ be a parsummable category, let $A$ be a finite set, let $\phi\colon A\times\omega\rightarrowtail\omega$ be any injection, and let $X_\bullet\in\mathcal C^{\times A}$.
\begin{enumerate}
\item If $u\in\mathcal M$ is any injection, then $u_*(\phi_*(X_\bullet))=(u\phi)_*(X_\bullet)$.
\item If $u\in\mathcal M$ is any injection, then $\phi_*((u_*X_a)_{a\in A})=(\phi\circ(\id\times u))_*(X_\bullet)$.
\item If $A=A_1\sqcup A_2$ is any partition of $A$, then $\phi_*(X_\bullet)=(\phi|_{A_1})_*((X_a)_{a\in A_1})+(\phi|_{A_2})_*((X_a)_{a\in A_2})$; in particular, the sum on the right hand side is well-defined. Here we use the shorthand $\phi|_{A_i}$ for $\phi|_{A_i\times\omega}$ ($i=1,2$).\label{item:lga-partition}
\item If $\sigma\colon A'\to A$ is bijective, then $\phi_*\big(X_\bullet)=(\phi\circ(\sigma\times\id))_*(\sigma^*X_\bullet)$, where $\sigma^*X_\bullet$ is the $A'$-indexed family with $(\sigma^*X_\bullet)_{a'}=X_{\sigma(a')}$ for all $a'\in A'$.\label{item:lga-perm}
\qed
\end{enumerate}
\end{lemma}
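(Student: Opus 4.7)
The plan is to unwind the definition $\phi_*(X_\bullet)=\sum_{a\in A}\phi(a,\blank)_*(X_a)$ in each case and apply three structural principles of a parsummable category: (a) the sum $+$ is strictly associative, commutative, and unital, so in particular a sum indexed by an arbitrary finite set is well-defined; (b) the sum is $E\mathcal M$-equivariant, so $u_*(Y+Z)=u_*Y+u_*Z$ for any $u\in\mathcal M$; and (c) the underlying $\mathcal M$-action on objects is strict, i.e.\ $u_*v_*X=(uv)_*X$.

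For part~(1) I would commute $u_*$ past the finite sum using (b) to get $\sum_{a\in A}u_*\phi(a,\blank)_*(X_a)$, and then rewrite each summand via (c) as $(u\phi(a,\blank))_*(X_a)=(u\phi)(a,\blank)_*(X_a)$, which by definition equals $(u\phi)_*(X_\bullet)$. For part~(2) I would instead push $u$ inside each summand by (c), namely $\phi(a,\blank)_*(u_*X_a)=(\phi(a,\blank)\circ u)_*(X_a)=\bigl((\phi\circ(\id\times u))(a,\blank)\bigr)_*(X_a)$, and then reassemble the sum. Parts~(3) and (4) are pure consequences of (a): (3) is the statement that, after splitting $A=A_1\sqcup A_2$, the sum can be regrouped in two stages, and (4) is the invariance of a finite sum under a bijective relabeling of its indexing set.

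The one point requiring care in each step is that every rewriting yields a sum whose summands have pairwise disjoint supports, so that the expressions are actually defined. By Lemma~\ref{lemma:support-change}-(\ref{item:sc-action}), $\supp(\phi(a,\blank)_*X_a)\subseteq\phi(\{a\}\times\omega)$, and these sets are pairwise disjoint as $a$ varies because $\phi$ is an injection $A\times\omega\rightarrowtail\omega$. Analogous disjointness for the tuples appearing in (1) and (2) is immediate because $u\phi$ and $\phi\circ(\id\times u)$ are again such injections; for (3) the supports of $(\phi|_{A_1})_*((X_a)_{a\in A_1})$ and $(\phi|_{A_2})_*((X_a)_{a\in A_2})$ are contained in the disjoint sets $\phi(A_1\times\omega)$ and $\phi(A_2\times\omega)$, respectively. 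I do not anticipate any real obstacle here: the entire lemma is formal bookkeeping from the definitions, which is presumably why the author defers it.
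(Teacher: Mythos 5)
The paper deliberately omits the proof of this lemma, stating only that it ``follows easily from the definitions,'' so there is no detailed argument to compare against. Your proposal is correct and is precisely the routine unwinding the author had in mind: pull $u_*$ through the finite sum via $E\mathcal M$-equivariance of $+$ for part~(1), push $u_*$ inside each summand via strictness of the $\mathcal M$-action for part~(2), regroup the sum for part~(3), and reindex it for part~(4), with the required disjointness of supports in each step guaranteed by Lemma~\ref{lemma:support-change}-(\ref{item:sc-action}) and injectivity of $\phi$, $u\phi$, and $\phi\circ(\id\times u)$.
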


\begin{lemma}\label{lemma:generalized-structure-maps}
Let $\mathcal C$ be a parsummable category, let $A$ be a finite set, let $\phi,\psi\colon A\times\omega\rightarrowtail\omega$ be injections, and let $X_\bullet\in\mathcal C^{\times A}$.
\begin{enumerate}
\item If $u\in\mathcal M$ is any injection, then $u_*([\psi, \phi]_{X_\bullet})=[u\psi, u\phi]_{X_\bullet}$. Moreover, if $v\in\mathcal M$ is yet another injection, then $[v,u]_{\phi_*(X_\bullet)}=[v\phi,u\phi]_{X_\bullet}$.\label{item:gsm-EM-equivariant}
\item If $u\in\mathcal M$ is any injection, then $[\psi,\phi]_{((u_*X_a)_{a\in A})}=[\psi\circ(\id\times u),\phi \circ(\id\times u)]_{X_\bullet}$.\label{item:gsm-balanced}
\item If $A=A_1\sqcup A_2$ is any partition of $A$, then $[\psi,\phi]_{X_\bullet}=[\psi|_{A_1},\phi|_{A_1}]_{(X_a)_{a\in A_1}}+[\psi|_{A_2},\phi|_{A_2}]_{(X_a)_{a\in A_2}}$.
\item If $\sigma\colon A'\to A$ is bijective, then $[\psi,\phi]_{X_\bullet}=[\psi\circ(\sigma\times\id),\phi\circ(\sigma\times\id)]_{\sigma^*X_\bullet}$.
\item $[\phi,\phi]=\id_{\phi_*(\blank)}$
\item If $\theta\colon A\times\omega\rightarrowtail\omega$ is yet another injection, then $[\theta,\psi]\circ[\psi,\phi]=[\theta,\phi]$.\qed
\end{enumerate}
\end{lemma}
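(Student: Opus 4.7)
My plan is to unfold the defining formula
\[
[\psi,\phi]_{X_\bullet}=\sum_{a\in A}[\psi(a,\blank),\phi(a,\blank)]_{X_a}
\]
in each of the six cases and deduce the statement from the analogous assertion for the ordinary structure isomorphisms $[v,u]_X$ (i.e., the singleton-index special case), together with standard manipulations of the partial sum. The key preliminary step I would carry out is to record two distributivity principles: that $u_*\colon\mathcal C\to\mathcal C$ satisfies $u_*(f+g)=u_*(f)+u_*(g)$, and that the structure isomorphisms distribute as $[v,u]_{X+Y}=[v,u]_X+[v,u]_Y$. Both are immediate from the strict $E\mathcal M$-equivariance $u^{X+Y}_\circ=u^X_\circ+u^Y_\circ$ combined with the explicit formulas for $u_*$ and $[v,u]$ on morphisms recorded in the proof of Lemma~\ref{lemma:EM-action-concise}.

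With these distributivity principles in hand, items (3)--(6) become routine bookkeeping. For (3) and (4), the defining sum literally splits along the partition $A=A_1\sqcup A_2$, and reindexing by a bijection $\sigma$ merely permutes the summands, which is harmless by strict commutativity; the well-definedness of the sums involved follows from disjointness of supports via Lemma~\ref{lemma:support-change}. Item (5) is termwise trivial, and (6) follows termwise from the corresponding composition identity for the original natural isomorphisms, using functoriality of $+$ to commute composition past summation. Item (2) reduces, via the pointwise identity $\psi(a,\blank)\circ u=\bigl(\psi\circ(\id\times u)\bigr)(a,\blank)$ (and likewise for $\phi$), to the singleton version of itself.

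The one step where the distributivity principles really earn their keep is (1). For the first half, I would pull $u_*$ through the defining sum and then apply the pointwise singleton identity $u_*[\psi(a,\blank),\phi(a,\blank)]_{X_a}=[u\psi(a,\blank),u\phi(a,\blank)]_{X_a}$, which is itself a short consequence of the definition $[v,u']_X=v^X_\circ\circ((u')^X_\circ)^{-1}$ and of relation~(\ref{eq:u-circ-relation}). For the second half, the distributivity of $v^{(-)}_\circ$ and $(u^{(-)}_\circ)^{-1}$ over $+$ allows one to compute $[v,u]_{\phi_*(X_\bullet)}$ directly from its defining formula, reassembling the result as $\sum_a [v\phi(a,\blank),u\phi(a,\blank)]_{X_a}=[v\phi,u\phi]_{X_\bullet}$.

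I do not anticipate any real obstacle here: once the two distributivity principles are isolated as a lemma, each of the six items is essentially a one-line verification, and the only care required is to ensure that sums, compositions, and $E\mathcal M$-actions can be freely interchanged throughout.
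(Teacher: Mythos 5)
The paper itself omits the proof of this lemma, stating only that it ``follows easily from the definitions,'' and your proposal is a correct and complete filling-in of exactly that direct verification: isolate the two distributivity facts, then reduce each item to its singleton ($|A|=1$) instance via the defining sum. This is precisely the route the paper implicitly endorses (as opposed to the alternative mentioned in Remark~\ref{rk:universally-natural-trafo} via uniqueness of universally natural transformations), so there is nothing to flag.
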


In part $(\ref{item:lga-perm})$ of the above lemmas we will be particularly interested in the case $A=A'$; in this case, replacing $X$ by $(\sigma^{-1})^*X$ shows that $(\phi\circ(\sigma\times\id))_*(X_\bullet)=\phi_*(\sigma.X_\bullet)$ and similarly for the structure isomorphisms, where $\Sigma_A$ acts on $\mathcal C^{\times A}$ from the left as usual, i.e.~$(\sigma.X_\bullet)_a=((\sigma^{-1})^*X_\bullet)_a=X_{\sigma^{-1}(a)}$.

\begin{rk}\label{rk:universally-natural-trafo}
It is evident from the construction that the above functors and transformations are preserved by any morphism of parsummable categories, i.e.~the above natural transformations for varying $\mathcal C$ assemble into \emph{universally natural transformations} in the sense of \cite[Definition~5.4]{schwede-k-theory}. One can in fact show that there is for any pair $\phi,\psi$ of injections $A\times\omega\rightarrowtail\omega$ precisely one universally natural transformation from $\phi_*$ to $\psi_*$, see \cite[Proposition~5.5]{schwede-k-theory}. This provides an alternative way to prove the second lemma above.
\end{rk}

\begin{rk}
As mentioned without proof in \cite[Remark~4.20]{schwede-k-theory}, the above maps make any parsummable category $\mathcal C$ into an algebra over the so-called \emph{injection operad} $\mathcal I$, whose $n$-ary operations are given by $\mathcal I(n)\mathrel{:=}E\Inj(\bm n\times\omega,\omega)$ where $\Inj$ denotes the set of injections. While we will not need this perspective below, we remark that (for $A=\bm n$) several of the above statements are actually consequences of this structure: for example, parts $(\ref{item:gsm-EM-equivariant})$ and $(\ref{item:gsm-balanced})$ are special cases of the compatibility of the action maps $\mathcal I(n)\times\mathcal C^{\times n}\to\mathcal C$ with the structure maps of the operad $\mathcal I$.
\end{rk}

\subsection{Permutative categories}
We recall that a \emph{permutative category} is a symmetric monoidal category in which the associativity and unitality isomorphisms are the respective identities. Explicitly this means, cf.~\cite[Definition~4.1]{may-permutative}:

\begin{defi}
A permutative category is a triple of a category $\mathscr C$ equipped with a functor $\blank\otimes\blank\colon\mathscr C\times\mathscr C\to\mathscr C$, that is strictly associative in the sense that
\begin{equation*}
\begin{tikzcd}[column sep=large]
\mathscr C\times\mathscr C\times\mathscr C\arrow[d, "\id\times(\blank\otimes\blank)"']\arrow[r, "(\blank\otimes\blank)\times\id"] & \mathscr C\times\mathscr C\arrow[d, "\blank\otimes\blank"]\\
\mathscr C\times\mathscr C\arrow[r, "\blank\otimes\blank"'] & \mathscr C
\end{tikzcd}
\end{equation*}
commutes strictly, and strictly unital in the sense that there exists a (necessarily unique) $\textbf{1}\in\mathscr C$ satisfying $\blank\otimes\textbf{1}=\id=\textbf{1}\otimes\blank$, together with a natural transformation $\tau\colon (\blank\otimes\blank)\Rightarrow(\blank\otimes\blank)\circ\text{twist}$, where $\text{twist}$ is the automorphism of $\mathscr C\times\mathscr C$ exchanging the two factors, such that the following conditions are satisfied:
\begin{enumerate}
\item (\textit{$\tau$ is unital}) For all $X\in\mathscr C$, $\tau_{\textbf{1},X}=\id_X=\tau_{X,\textbf{1}}$.
\item (\textit{$\tau$ is self-inverse}) For all $X,Y\in\mathscr C$, $\tau_{Y,X}\tau_{X,Y}=\id_{X\otimes Y}$.
\item (\textit{$\tau$ is associative}) For all $X,Y,Z\in\mathscr C$ the following diagram commutes:
\begin{equation*}
\begin{tikzcd}[column sep=small]
X\otimes Y\otimes Z\arrow[rr, "\tau_{X\otimes Y,Z}"]\arrow[rd, "\id_X\otimes\tau_{Y,Z}"', bend right=5pt] && Z\otimes X\otimes Y.\\
& X\otimes Z\otimes Y\arrow[ur, bend right=5pt, "\tau_{X,Z}\otimes \id_Y"']
\end{tikzcd}
\end{equation*}
\end{enumerate}
\end{defi}

We will denote permutative categories by the script letters $\mathscr C$, $\mathscr D$, and so on.

In the statement of the third condition above, we have already used that strict associativity means that we do not have to care about in which way an expression like $X\otimes Y\otimes Z$ is bracketed. In the same way, we can therefore make sense of $\bigotimes_{i\in I}X_i$ for any finite totally ordered set $I$ and any family $(X_i)_{i\in I}$ of objects of $\mathscr C$. More generally, if $I$ is any (not necessarily finite) totally ordered set and $(X_i)_{i\in I}$ is a family of objects in $\mathscr C$, almost all of which are equal to $\textbf{1}$, then we can define $\bigotimes_{i\in I}X_i\mathrel{:=}\bigotimes_{i\in J}X_i$ where $J\subset I$ is any finite set containing all $i\in I$ with $X_i\not=\bm1$; by strict unitality, this is independent of the choice of $J$ and in particular it agrees with the previous definition whenever $I$ itself is already finite.

We also recall from \cite{may-permutative} the \emph{categorical Barratt-Eccles operad} $E\Sigma_*$; the operad structure appears implicitly in \cite[discussion after Proposition~4.2; Lemma~4.4]{may-permutative}, and it is then remarked as \cite[Lemma~4.8]{may-permutative} that taking classifying spaces levelwise recovers the classical Barratt-Eccles operad.

Explicitly, the $n$-ary operations are given by $(E\Sigma_*)_n=E\Sigma_n$ with right $\Sigma_n$-action via $(\pi_2,\pi_1).\sigma=(\pi_2\sigma,\pi_1\sigma)$, and the structure maps are induced from
\begin{align*}
\Sigma_k\times\Sigma_{n_1}\times\cdots\times\Sigma_{n_k}&\to\Sigma_{n_1+\cdots+n_k}\\
(\sigma,\pi_1,\dots,\pi_k)&\mapsto\sigma_{(n_1,\dots,n_k)}\circ (\pi_1\times\cdots\times\pi_k) \\&\qquad{}= (\pi_{\sigma^{-1}(1)}\times\cdots\times\pi_{\sigma^{-1}(k)})\circ\sigma_{(n_1,\dots,n_k)};
\end{align*}
here $\pi_1\times\cdots\times\pi_k$ denotes the image of $(\pi_1,\dots,\pi_k)$ under the usual `block sum' embedding $\Sigma_{n_1}\times\cdots\times\Sigma_{n_k}\hookrightarrow\Sigma_{n_1+\cdots+n_k}$ (and similarly for $\pi_{\sigma^{-1}(1)}\times\cdots\times\pi_{\sigma^{-1}(k)}$), and $\sigma_{(n_1,\dots,n_k)}\in\Sigma_{n_1+\cdots+n_k}$ is the permutation shuffling the blocks $\{1,\dots,n_1\},\{n_1+1,\dots,n_1+n_2\},\dots,\{n_1+\cdots+n_{k-1}+1,\dots,n_1+\cdots+n_k\}$ according to $\sigma$, i.e.
\begin{equation*}
\sigma_{(n_1,\dots,n_k)}(n_1+n_2+\cdots+n_{\ell-1}+m)=n_{\sigma^{-1}(1)}+n_{\sigma^{-1}(2)}+\cdots+n_{\sigma^{-1}(\sigma(\ell)-1)}+m
\end{equation*}
for all $1\le\ell\le k$ and $1\le m\le n_{\ell}$; beware that there is a typo in \cite[Lemma~4.4]{may-permutative} where the permutation $\sigma_{(n_1,\dots,n_k)}$ is missing, so that the action map is mistakenly written as $(\sigma,\pi_1,\dots,\pi_k)\mapsto \pi_{\sigma^{-1}(1)}\times\cdots\times\pi_{\sigma^{-1}(k)}$.

\begin{thm}[May]\label{thm:permutative-operadic-description}
If $\mathscr C$ is any permutative category, then there is a unique way to equip $\mathscr C$ with the structure of an algebra over $E\Sigma_*$ such that the following holds:
\begin{enumerate}
\item For any $n\ge 0$, the restriction of the action map $\alpha_n\colon E\Sigma_n\times\mathscr C^{\times n}\to\mathscr C$ to the object $\id\in E\Sigma_n$ is given by the functor $(X_1,\dots,X_n)\mapsto X_1\otimes\cdots\otimes X_n$ (and likewise on morphisms). In particular, $*\cong E\Sigma_0\times\mathscr C^{\times0}\to\mathscr C$ is given by the inclusion of $\textbf{\textup 1}$.
\item For any $1\le k<n$ and any $X_1,\dots,X_n\in\mathscr C$, the image of $\big(((k\; k+1),\id), (\id_{X_1},\dots,\id_{X_n})\big)$ under the action map is the morphism
\begin{align*}
\bigotimes_{i=1}^n X_i&=\bigotimes_{i=1}^{k-1}X_i\otimes X_k\otimes X_{k+1}\otimes\bigotimes_{i=k+2}^n X_i\\&\xrightarrow{\bigotimes_{i=1}^{k-1}\id_{X_i}\otimes\tau_{X_k,X_{k+1}}\otimes\bigotimes_{i=k+2}^n\id_{X_i}}
\bigotimes_{i=1}^{k-1}X_i\otimes X_{k+1}\otimes X_{k}\otimes\bigotimes_{i=k+2}^nX_i.
\end{align*}
\end{enumerate}
\begin{proof}
There is indeed such an algebra structure by \cite[Lemmas~4.3~and~4.4]{may-permutative}. For uniqueness we observe that the action map $\alpha_n$ is determined on maps of the form $\big((\sigma,\sigma),(f_1,\dots,f_n)\big)$ for all morphisms $f_1,\dots,f_n$ in $\mathscr C$ and permutations $\sigma\in\Sigma_n$ by the first condition together with the fact that $\alpha_n$ has to satisfy the relation $\alpha_n(\blank.\sigma,\blank)=\alpha_n(\blank,\sigma.\blank)$ as it is a part of an $E\Sigma_*$-algebra structure. Likewise, the second condition together with $\Sigma_n$-equivariance determines the image of $\big((\pi\sigma,\sigma), (\id_{X_1},\dots,\id_{X_n})\big)$ for any $\sigma\in\Sigma_n$, $X_1,\dots,X_n\in\mathscr C$ and any transposition $\pi$ of the form $(k\;k+1)$. By functoriality, the action map is then more generally determined on $\big((\pi_r\cdots\pi_1\sigma,\sigma),(\id_{X_1},\dots,\id_{X_n})\big)$ whenever $\pi_1,\dots,\pi_r$ are transpositions as above. Finally, if $\vartheta\in\Sigma_n$ is arbitrary, then we can write $\vartheta\sigma^{-1}=\pi_r\cdots\pi_1$ with $\pi_i$ as above (as these transpositions generate $\Sigma_n$), finishing the proof.
\end{proof}
\end{thm}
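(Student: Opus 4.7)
The plan is to split the proof into existence and uniqueness. For existence I would simply cite \cite[Lemmas~4.3 and 4.4]{may-permutative}, which construct an $E\Sigma_*$-algebra structure on any permutative category and verify the axioms; the two normalisation conditions (1) and (2) hold by inspection of that construction. The real work is the uniqueness statement, which I would prove by showing that conditions (1) and (2), together with the general $E\Sigma_*$-algebra axioms (functoriality of $\alpha_n$ and the right $\Sigma_n$-equivariance relation $\alpha_n(x.\sigma,y)=\alpha_n(x,\sigma.y)$), force the value of $\alpha_n$ on every morphism of $E\Sigma_n\times\mathscr{C}^{\times n}$.

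My step-by-step plan is as follows. First, use condition~(1) together with $\Sigma_n$-equivariance to fix $\alpha_n$ on \emph{diagonal} morphisms, i.e.~on pairs of the form $((\sigma,\sigma),(f_1,\dots,f_n))$: condition~(1) handles the case $\sigma=\id$, and acting on the right by $\sigma$ translates this to arbitrary $\sigma\in\Sigma_n$ (the action on the tuple $(f_1,\dots,f_n)$ merely permutes the factors, and the value is then determined by functoriality). Second, use condition~(2) to pin down $\alpha_n$ on morphisms $((\pi,\id),(\id_{X_1},\dots,\id_{X_n}))$ for $\pi$ an adjacent transposition $(k\;k+1)$, then propagate again by equivariance to obtain the value on $((\pi\sigma,\sigma),(\id_{X_1},\dots,\id_{X_n}))$ for every $\sigma\in\Sigma_n$. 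Third, given an arbitrary $\vartheta\in\Sigma_n$, write $\vartheta\sigma^{-1}=\pi_r\cdots\pi_1$ as a product of adjacent transpositions (possible since these generate $\Sigma_n$) and factor the unique morphism $(\vartheta,\sigma)\to(\sigma,\sigma)$ in the chaotic category $E\Sigma_n$ through the intermediate objects $(\pi_{i}\cdots\pi_1\sigma,\sigma)$; functoriality of $\alpha_n$ then determines its value on this morphism (paired with the identity tuple) as a composite of maps already pinned down. Finally, for arbitrary morphisms $f_i\colon X_i\to Y_i$, decompose $((\vartheta,\sigma),(f_1,\dots,f_n))$ as the composite of a diagonal morphism $((\sigma,\sigma),(f_1,\dots,f_n))$ and a morphism of the form $((\vartheta,\sigma),(\id,\dots,\id))$, reducing everything to the previous cases.

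The main subtlety I anticipate is well-definedness: the factorization of $\vartheta\sigma^{-1}$ into adjacent transpositions is highly non-unique, so one might worry that different factorizations could force different values on the same morphism. However, this is not a genuine obstacle for the uniqueness argument: we are merely \emph{computing} what $\alpha_n$ must equal on a morphism assuming an algebra structure exists, so any actual algebra structure automatically agrees across factorizations, and existence, guaranteed by \cite{may-permutative}, ensures that at least one such structure is there. What would be more delicate, and what is absorbed into May's existence result, is the coherence verification that the resulting formulas genuinely assemble into an operad action; I am content to outsource this to the cited references and concentrate the present proof on the forcing argument described above.
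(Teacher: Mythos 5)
Your proof follows essentially the same route as the paper's: cite May for existence, then force uniqueness by determining $\alpha_n$ first on diagonal morphisms $((\sigma,\sigma),(f_\bullet))$ via condition~(1) and $\Sigma_n$-equivariance, then on adjacent-transposition morphisms $((\pi\sigma,\sigma),(\id_\bullet))$ via condition~(2) and equivariance, and finally on arbitrary $((\vartheta,\sigma),(\id_\bullet))$ by functoriality and a factorization of $\vartheta\sigma^{-1}$ into adjacent transpositions. Your additional final step, factoring a general morphism $((\vartheta,\sigma),(f_\bullet))$ through a diagonal morphism, is the one reduction the paper leaves implicit; you are right to include it, and your remark that well-definedness across factorizations is not an issue in a pure uniqueness argument is also exactly the right thing to say.
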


\begin{rk}\label{rk:rk-before-coherence-isomorphisms}
In the above we have only used equivariance and functoriality of the individual action maps $E\Sigma_n\times\mathscr C^{\times n}\to\mathscr C$. In fact, using the compatibility of these with the structure maps of the operad $E\Sigma_*$, it would have been enough to assume the above conditions for $k\le2$.
\end{rk}

In particular,
\begin{align*}
\alpha_n\big(\sigma, (X_1,\dots,X_n)\big)&=\alpha_n\big(\id,\sigma.(X_1,\dots,X_n)\big)=\alpha_n\big(\id,(X_{\sigma^{-1}(1)},\dots,X_{\sigma^{-1}(n)})\big)\\
&=\bigotimes_{i=1}^n X_{\sigma^{-1}(i)}
\end{align*}
for all $X_1,\dots,X_n\in\mathscr C$, $\sigma\in\Sigma_n$, and similarly for morphisms.

Thus, evaluating at $(\sigma,1)$ gives us for each $X_1,\dots,X_n$ as above a specific natural isomorphism
$\bigotimes_{i=1}^n X_i\to \bigotimes_{i=1}^n X_{\sigma^{-1}(i)}$, usually referred to as the \emph{coherence isomorphism associated to $\sigma$}. More generally, if $A,B$ are finite totally ordered sets, and $\sigma\colon A\to B$ is a bijection of their underlying sets, then we get a coherence isomorphism $\bigotimes_{a\in A}X_a\to\bigotimes_{b\in B}X_{\sigma^{-1}(b)}$ associated to $\sigma$ for any $X_\bullet\in\mathcal C^{\times A}$ by applying the above to the permutation
\begin{equation*}
\{1,\dots,|A|\}\xrightarrow\pi A\xrightarrow\sigma B\xrightarrow\vartheta\{1,\dots,|B|\}=\{1,\dots,|A|\}
\end{equation*}
and the $|A|$-tuple $\pi^*X_\bullet=(X_{\pi(1)},\dots,X_{\pi(|A|)})$, where $\pi$ and $\vartheta$ are the unique order preserving bijections.

\begin{rk}
While May does not prove this, it is in fact easy to show (once the above theorem has been established) that the above construction provides an isomorphism of categories between the category $\cat{PermCat}$ of small permutative categories and strict symmetric monoidal functors and the category of $E\Sigma_*$-algebras in $\cat{Cat}$. While this is an important result, we will only need the above version of the theorem, and we will only use it as a convenient way to prove that several coherence isomorphisms agree.
\end{rk}

While general symmetric monoidal categories will play no role below, we still recall for motivational purposes:

\begin{thm}\label{thm:perm-vs-sym}
The inclusion $\cat{PermCat}\to\cat{SymMonCat}$ of small permutative categories (and \emph{strict} symmetric monoidal functors!) into the category of small symmetric monoidal categories and \emph{strong} symmetric monoidal functors is a homotopy equivalence with respect to the underlying equivalences of categories on both sides.
\end{thm}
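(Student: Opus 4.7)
I would prove this using the classical Isbell strictification. The plan is to construct a functor $L\colon\cat{SymMonCat}\to\cat{PermCat}$ together with a natural equivalence $\varepsilon_\mathscr C\colon L\mathscr C\to\mathscr C$, and to deduce that $\varepsilon$ provides the two required homotopies between $L\circ\incl$, $\incl\circ L$ and the respective identity functors.

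Explicitly, $L\mathscr C$ would have as objects the finite sequences $(X_1,\ldots,X_n)$ of objects of $\mathscr C$, tensor product given by concatenation (strictly associative, strictly unital with the empty sequence as unit), and a morphism $(X_1,\ldots,X_n)\to(Y_1,\ldots,Y_m)$ defined as a morphism of $\mathscr C$ between the left-normalized iterated tensor products $((\cdots(X_1\otimes X_2)\otimes\cdots)\otimes X_n)$ and $((\cdots(Y_1\otimes Y_2)\otimes\cdots)\otimes Y_m)$. The symmetry of $L\mathscr C$ is induced by that of $\mathscr C$ via coherence isomorphisms, with the permutative axioms reducing to Mac Lane's coherence theorem. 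The evaluation functor $\varepsilon_\mathscr C$ sends each tuple to its left-normalized iterated tensor product and is the identity on morphisms; it is fully faithful by construction and essentially surjective since $X=\varepsilon_\mathscr C((X))$, hence an underlying equivalence. It carries a canonical strong symmetric monoidal structure, which becomes \emph{strict} whenever $\mathscr C$ is already permutative.

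The two homotopies then follow. On a permutative $\mathscr C$ the functor $\varepsilon_\mathscr C$ is strict symmetric monoidal and natural in strict monoidal functors, so $\varepsilon$ provides a natural transformation $L\circ\incl\Rightarrow\id_{\cat{PermCat}}$ that is pointwise an underlying equivalence; on a general symmetric monoidal $\mathscr C$ the functor $\varepsilon_\mathscr C$ is still a strong monoidal equivalence, and its pseudo-naturality in strong monoidal functors, with the naturality squares commuting up to canonical coherence isomorphism, likewise yields $\incl\circ L\simeq\id_{\cat{SymMonCat}}$ after passing to the localization at underlying equivalences.

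The hardest part will be the careful bookkeeping needed to make $L$ functorial on \emph{strong} (rather than strict) symmetric monoidal functors: given a strong monoidal $F\colon\mathscr C\to\mathscr D$, the induced functor $L(F)$ must send $(X_1,\ldots,X_n)$ to $(F(X_1),\ldots,F(X_n))$ on objects, while on morphisms one must absorb the comparison isomorphisms $F(X)\otimes F(Y)\cong F(X\otimes Y)$ into the definition in a way that respects composition and symmetry. Both the functoriality of $L$ and the strictness of $\varepsilon_\mathscr C$ on permutative inputs rely repeatedly on Mac Lane's coherence theorem, which although standard remains the technical heart of the argument.
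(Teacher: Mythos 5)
Your construction of $L$ coincides with Mac Lane's $J$-construction, which is exactly what the paper uses, and your handling of the natural transformation $L\circ\incl\Rightarrow\id_{\cat{PermCat}}$ via the evaluation functor $\varepsilon_{\mathscr C}=\mu_{\mathscr C}$ matches the paper's argument precisely: $\mu$ is strict symmetric monoidal when $\mathscr C$ is permutative and strictly natural in \emph{strict} monoidal functors, so it directly furnishes one of the two required homotopies.

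However, your treatment of the other direction is where the argument has a genuine gap. You attempt to reuse $\varepsilon_{\mathscr C}$, appealing to its ``pseudo-naturality in strong monoidal functors'' to yield $\incl\circ L\simeq\id_{\cat{SymMonCat}}$ ``after passing to the localization.'' But the notion of homotopy equivalence used here (stated right after the theorem) demands an honest zig-zag of \emph{natural} transformations that are levelwise underlying equivalences; a pseudonatural transformation with non-identity coherence cells does not, by itself, supply such a zig-zag as a functor $\cat{SymMonCat}\to\cat{SymMonCat}$. Straightening a pseudonatural family into a strict one is possible with additional work, but it is not a ``likewise'' step, and you would still be left having to construct that replacement. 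The paper sidesteps this entirely by working with the \emph{other} comparison functor $\nu_{\mathscr C}\colon\mathscr C\to J\mathscr C$, $X\mapsto(X)$, which (although only \emph{strong} symmetric monoidal and hence not usable on the $\cat{PermCat}$ side) \emph{is} strictly natural in strong symmetric monoidal functors and so provides a genuine natural levelwise underlying equivalence $\id_{\cat{SymMonCat}}\Rightarrow\incl\circ J$. So the fix is simple --- use $\nu$ for the $\cat{SymMonCat}$ side and reserve $\mu$ for the $\cat{PermCat}$ side --- but as written the second homotopy is missing.
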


Here we call a functor $F\colon\mathscr C\to\mathscr D$ between two categories equipped with classes of weak equivalences containing all isomorphisms and closed under $2$-out-of-$3$ a \emph{homotopy equivalence} if it is homotopical (i.e.~it sends weak equivalences to weak equivalences) and it admits a \emph{homotopy inverse}, i.e.~a homotopical functor $G\colon\mathscr D\to\mathscr C$ such that the two composites $GF$ and $FG$ can be connected by zig-zags of natural transformations, that are levelwise given by weak equivalences, to the respective identities. A homotopy equivalence in particular induces an equivalence of (ordinary or $\infty$-categorical) localizations at the given classes of weak equivalences.

\begin{proof}[Sketch of proof]
This result is well-known, but I do not know of an explicit reference, so let me briefly explain how we can prove this using several results from the literature.

We first recall from \cite[Theorem XI.3.1]{cat-working} Mac Lane's construction that associates to any monoidal category $\mathscr C$ a strict monoidal category $J\mathscr C$ together with strong monoidal functors $\mu\colon J\mathscr C\to\mathscr C$ and $\nu\colon\mathscr C\to J\mathscr C$ that are underlying equivalences of categories:

The category $J\mathscr C$ has objects the finite sequences $(X_1,\dots,X_m)$; here $m=0$ is allowed, in which case we also denote the resulting object by $\epsilon$. We moreover define
\begin{equation*}
\Hom_{J\mathscr C}((X_1,\dots,X_m), (Y_1,\dots,Y_n))\mathrel{:=}\Hom_{\mathscr C}(\mu(X_1,\dots,X_m), \mu(Y_1,\dots,Y_n))
\end{equation*}
with
\begin{equation*}
\mu(Z_1,\dots,Z_o)\mathrel{:=}(\cdots(Z_1\otimes Z_2)\otimes\cdots)\otimes Z_o
\end{equation*}
where we have bracketed `from left to right.' The composition in $J\mathscr C$ is given by the composition in $\mathscr C$.

We make $J\mathscr C$ into a strict monoidal category as follows: the unit is given by $\epsilon$ and the tensor product is given on the level of objects by concatenation; for the definition on morphisms see \emph{loc.cit.}

The functor $\mu$ is defined on objects as above and it is given on hom sets by the respective identities. The strong monoidal structure on $\mu$ is as follows: the unitality isomorphism is the identity, and for all $(X_1,\dots,X_m),(Y_1,\dots,Y_n)\in J\mathscr C$ the structure isomorphism
\begin{align*}
\mu(X_1,\dots,X_m)\otimes\mu(Y_1,\dots,Y_n)&\to\mu((X_1,\dots,X_m)\otimes(Y_1,\dots,Y_n))\\
&=\mu(X_1,\dots,X_m,Y_1,\dots,Y_n)
\end{align*}
is given by the evident composition of the associativity isomorphisms.

On the other hand, we define $\nu\colon\mathscr C\to J\mathscr C$ on objects by $\nu(X)=(X)$; again, $\nu$ is the identity on hom-sets. We make $\nu$ into a strong symmetric monoidal functor by choosing both the unitality isomorphism $\epsilon\to(\textbf1)$ as well as the isomorphisms $(X,Y)\to (X\otimes Y)$ to be the corresponding identity morphisms in $\mathscr C$ (but note that these are \emph{not} identity morphisms in $J\mathscr C$ as source and target do not agree!).

As sketched in \cite[proof of Proposition~4.2]{may-permutative}, $J\mathscr C$ actually becomes a permutative category whenever $\mathscr C$ is symmetric monoidal by taking the symmetry isomorphism $(X_1,\dots,X_m)\otimes (Y_1,\dots,Y_n)\to (Y_1,\dots,Y_n)\otimes (X_1,\dots,X_m)$ to be the composition
\begin{align*}
&\mu(X_1,\dots,X_m,Y_1,\dots,Y_n)\cong\mu(X_1,\dots,X_m)\otimes\mu(Y_1,\dots,Y_n)\\
&\quad\xrightarrow{\tau_{\mu(X_1,\dots,X_m),\mu(Y_1,\dots,Y_n)}}\mu(Y_1,\dots,Y_n)\otimes\mu(X_1,\dots,X_m)\\
&\quad\cong\mu(Y_1,\dots,Y_n,X_1,\dots,X_m)
\end{align*}
where the outer isomorphisms are defined as before. With respect to this structure, $\mu$ and $\nu$ are actually strong \emph{symmetric} monoidal.

It is not hard to check that $J$ becomes a functor $\cat{SymMonCat}\to\cat{PermCat}$ if we define $JF\colon J\mathscr C\to J\mathscr D$ for any strong symmetric monoidal functor $F\colon\mathscr C\to\mathscr D$ via
$(JF)(X_1,\dots,X_m)=(FX_1,\dots,FX_m)$ and on morphisms as follows: if $f\colon (X_1,\dots,X_m)\to (Y_1,\dots,Y_n)$ is any morphism (i.e.~$f$ is a morphism $\bigotimes_{i=1}^m X_i\to\bigotimes_{i=1}^nY_i$ in $\mathscr C$), then $(JF)(f)\colon (FX_1,\dots,FX_m)\to (FY_1,\dots, FY_n)$ is given by the composition
\begin{equation*}
\bigotimes_{i=1}^m FX_i\xrightarrow{\cong} F\left(\bigotimes_{i=1}^m X_i\right)\xrightarrow{Ff}F\left(\bigotimes_{i=1}^nY_i\right)\xrightarrow{\cong} \bigotimes_{i=1}^n F(Y_i)
\end{equation*}
where the unlabeled isomorphism on the left is given by the unitality isomorphism of the strong symmetric monoidal functor $F$ if $m=0$ or by the composition
\begin{align*}
&\phantom{\to}\bigotimes_{i=1}^m FX_i \xrightarrow{\nabla_{X_1,X_2}\otimes\bigotimes_{i=3}^m\id_{F(X_i)}} F(X_1\otimes X_2)\otimes\bigotimes_{i=3}^n F(X_i)\to\cdots\\
&\to F\left(\bigotimes_{i=1}^{m-1}X_i\right)\otimes F(X_m)\xrightarrow{\nabla_{\bigotimes_{i=1}^{m-1}X_i,X_m}}F\left(\bigotimes_{i=1}^m X_i\right)
\end{align*}
of the structure isomorphisms $\nabla$ of $F$ if $m>0$, and the remaining isomorphism is defined analogously; here all the tensor products are bracketed from left to right again. It is moreover not hard to check that with respect to this the strong symmetric monoidal functor $\nu$ is natural, yielding a natural levelwise underlying equivalence $\id\Rightarrow\incl\circ J$.

However, as $\nu$ is not a \emph{strict} symmetric monoidal functor, it does not define a transformation $\id\Rightarrow J\circ\incl$. On the other hand, if the associativity isomorphisms in $\mathscr C$ are strict (for example, if $\mathscr C$ is permutative), then $\mu$ is actually strict symmetric monoidal, and while it is in general only pseudonatural in \emph{strong} symmetric monoidal functors, it is strictly natural in \emph{strict} monoidal functors. Thus, $\mu$ provides a natural underlying equivalence $J\circ\incl\Rightarrow\id$, which completes the proof of the theorem.
\end{proof}

\begin{warn}
The above theorem should not be mistaken for a statement about the corresponding $2$-categories. In particular, it is \emph{not} true that any strong symmetric monoidal functor between permutative categories is isomorphic to a strict symmetric monoidal functor.
\end{warn}

\begin{rk}\label{rk:hom-sets-not-disjoint}
As we will later have to be careful about a similar issue (see Warning~\ref{warn:Phi-hom-sets}), we want to emphasize again that $\nu$ is not a strict monoidal functor: although its structure isomorphisms are given by identity arrows in $\mathscr C$, they are not the identity arrows in $J\mathscr C$; in fact, the object $\nu(X)\otimes\nu(Y)$ is never equal to $\nu(X\otimes Y)$.

More generally, two morphisms between different pairs of objects in $J\mathscr C$ might be given by the same morphism of $\mathscr C$, so giving a morphism in $\mathscr C$ is not enough to describe a morphism in $J\mathscr C$ as long as we do not fix source and target.
\end{rk}

\subsection{From permutative categories to parsummable categories}
Finally, we recall the functor $\Phi\colon\cat{PermCat}\to\cat{ParSumCat}$ arising in Schwede's construction of the global algebraic $K$-theory of symmetric monoidal categories. We will be as brief as possible, and in particular we will omit almost all proofs; the curious reader can find the relevant details in \cite[Section~11]{schwede-k-theory}.

\begin{constr}
Let $\mathscr C$ be a small permutative category. We define a parsummable category $\Phi(\mathscr C)$ as follows: an object of $\Phi(\mathscr C)$ is a countably infinite sequence $X_\bullet\mathrel{:=}(X_1,X_2,\dots)$ of objects of $\mathscr C$ such that $X_i=\bm1$ for almost all $i\in\omega$. If $Y_\bullet\mathrel{:=}(Y_1,Y_2,\dots)$ is another such object, then we define
\begin{equation*}
\Hom_{\Phi(\mathscr C)}(X_\bullet,Y_\bullet)\mathrel{:=}\Hom_{\mathscr C}\left(\bigotimes_{i\in\omega} X_i,\bigotimes_{i\in\omega} Y_i\right)
\end{equation*}
with composition given by the composition in $\mathscr C$.

If now $u\in\mathcal M$ is any injection, then $u_*(X_\bullet)$ is defined via
\begin{equation*}
u_*(X_\bullet)_i=\begin{cases}
X_j & \text{if }i=u(j)\\
\textbf{1} & \text{if }i\notin\im(u).
\end{cases}
\end{equation*}
The structure isomorphism $u^{X_\bullet}_\circ$ is given as follows: we pick $K\gg0$ such that $X_i=\bm1=u_*(X_\bullet)_i$ for all $i>K$, and a permutation $\sigma\in\Sigma_K$ such that $\sigma(i)=u(i)$ for all $i\le K$ with $X_i\not=\bm1$. Then $u^{X_\bullet}_\circ$ is the coherence isomorphism
\begin{equation*}
\bigotimes_{i\in\omega} X_i=\bigotimes_{i=1}^KX_i\to\bigotimes_{i=1}^K X_{\sigma^{-1}(i)}=\bigotimes_{i=1}^K u_*(X_\bullet)_i=\bigotimes_{i\in\omega} u_*(X_\bullet)_i
\end{equation*}
associated to $\sigma$; we omit the verification that $u^{X_\bullet}_\circ$ is independent of choices and that the relation $(\ref{eq:u-circ-relation})$ holds, making $\Phi(\mathscr C)$ into an $E\mathcal M$-category.

One easily checks that $\supp(X_\bullet)=\{i\in\omega : X_i\not=\bm1\}$. Thus, we can define the sum $X_\bullet+Y_\bullet$ of two disjointly supported objects by
\begin{equation*}
(X_\bullet+Y_\bullet)_i=\begin{cases}
X_i & \text{if }i\in\supp(X_\bullet)\\
Y_i & \text{otherwise}.
\end{cases}
\end{equation*}
If $X'_\bullet,Y'_\bullet$ are two further disjointly supported objects, and $f\colon X_\bullet\to X'_\bullet$, $g\colon Y_\bullet\to Y'_\bullet$ are morphisms, then we define $f+g$ as the composition
\begin{equation*}
\bigotimes_{i\in\omega}(X_\bullet+Y_\bullet)_i\cong
\bigotimes_{i\in\omega} X_i\otimes \bigotimes_{i\in\omega} Y_i\xrightarrow{f\otimes g}\bigotimes_{i\in\omega} X_i'\otimes \bigotimes_{i\in\omega} Y_i'\cong\bigotimes_{i\in\omega}(X_\bullet'+Y_\bullet')_i.
\end{equation*}
Here the unlabeled isomorphism on the left is given by the coherence isomorphism
\begin{equation}\label{eq:Phi-sum-of-morphisms-coherence}
\begin{aligned}
&\bigotimes_{i\in\omega}(X_\bullet+Y_\bullet)_i=
\bigotimes_{i\in\supp(X_\bullet)\cup\supp(Y_\bullet)}(X_\bullet+Y_\bullet)_i
\\
&\quad\xrightarrow{\cong}\bigotimes_{i\in\supp(X_\bullet)} X_i\otimes\bigotimes_{i\in\supp(Y_\bullet)} Y_i=
\bigotimes_{i\in\omega} X_i\otimes\bigotimes_{i\in\omega} Y_i
\end{aligned}
\end{equation}
corresponding to the evident permutation, and the remaining isomorphism is defined analogously. More precisely, we can make the disjoint union $\supp(X_\bullet)\amalg\supp(Y_\bullet)$ into a totally ordered set by demanding that the ordering on each individual summand be as before but that in addition any element of the first summand should be smaller than any element of the second summand. By disjointness, there is a tautological bijection $\supp(X_\bullet)\cup\supp(Y_\bullet)\to\supp(X_\bullet)\amalg\supp(Y_\bullet)$ from the internal union to the disjoint union, and if we view the left hand side as a totally ordered set via the total ordering on $\omega$, then $(\ref{eq:Phi-sum-of-morphisms-coherence})$ is the coherence isomorphism (in the sense of the discussion after Remark~\ref{rk:rk-before-coherence-isomorphisms}) associated to this tautological bijection.

Finally, if $F\colon\mathscr C\to\mathscr D$ is a strict symmetric monoidal functor of permutative categories, then we define $\Phi(F)\colon\Phi(\mathscr C)\to\Phi(\mathscr D)$ as follows: $\Phi(F)(X_\bullet)_i=F(X_i)$ for all $i\in\omega$ and $X_\bullet\in\Phi(\mathscr C)$, and if $f\colon X_\bullet\to Y_\bullet$ is any morphism, then
\begin{equation*}
\Phi(F)(f):\bigotimes_{i\in\omega} F(X_i)=F\left(\bigotimes_{i\in\omega}X_i\right) \xrightarrow{Ff}
F\left(\bigotimes_{i\in\omega}Y_i\right) = \bigotimes_{i\in\omega} F(Y_i).
\end{equation*}
We omit the verification that this defines a functor $\Phi\colon\cat{PermCat}\to\cat{ParSumCat}$.
\end{constr}

\begin{warn}\label{warn:Phi-hom-sets}
Similarly to Remark~\ref{rk:hom-sets-not-disjoint}, we have to remember that writing down a morphism in $\mathscr C$ is \emph{not} enough to pin down a morphism in $\Phi\mathscr C$ as this does not record the source and target of the map. This means that we will at several points below first need to verify that two given morphisms, which we want to prove to be equal, actually share the same source as well as the same target. The same caveat applies to several of the constructions below.
\end{warn}

\begin{rk}
We can define $\Phi(F)$ more generally for strong symmetric monoidal functors $F$ that are strictly unital in the sense that the structure isomorphism $\textbf1\to F(\textbf1)$ is the identity. However, in this generality the definition of $\Phi(F)$ on morphisms becomes more complicated, see \cite[Construction~11.6]{schwede-k-theory}.
\end{rk}

\begin{lemma}\label{lemma:Phi-reflect-preserve}
$\Phi$ preserves and reflects underlying equivalences.
\begin{proof}
If $\mathscr C$ is any permutative category, then the underlying category of $\Phi(\mathscr C)$ is naturally equivalent to the underlying category of $\mathscr C$ by \cite[Remark~11.4]{schwede-k-theory}. Thus, the claim follows from $2$-out-of-$3$.
\end{proof}
\end{lemma}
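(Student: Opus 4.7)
The plan is to explicitly exhibit the natural equivalence between the underlying categories of $\mathscr C$ and $\Phi(\mathscr C)$ and then use $2$-out-of-$3$, following the hint in the referenced remark.

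First, I would define a functor $\iota_{\mathscr C}\colon\mathscr C\to\Phi(\mathscr C)$ sending an object $X$ to the sequence $(X,\bm1,\bm1,\dots)$, and sending a morphism $f\colon X\to Y$ to the morphism with the same underlying arrow in $\mathscr C$ (using that $\bigotimes_{i\in\omega}\iota_{\mathscr C}(X)_i=X$ and $\bigotimes_{i\in\omega}\iota_{\mathscr C}(Y)_i=Y$ by strict unitality of the permutative structure). Functoriality is immediate, as is the fact that $\iota$ is fully faithful: by strict unitality, the hom set $\Hom_{\Phi(\mathscr C)}(\iota X,\iota Y)$ is by definition $\Hom_{\mathscr C}(X,Y)$.

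Next, I would check essential surjectivity. Given any $X_\bullet\in\Phi(\mathscr C)$, pick $K$ large enough that $X_i=\bm1$ for $i>K$ and set $X\mathrel{:=}X_1\otimes\cdots\otimes X_K$. Then $\bigotimes_{i\in\omega}X_i=X=\bigotimes_{i\in\omega}\iota_{\mathscr C}(X)_i$, so the identity of $X$ in $\mathscr C$ defines an isomorphism $X_\bullet\cong\iota_{\mathscr C}(X)$ in $\Phi(\mathscr C)$. Hence $\iota_{\mathscr C}$ is an equivalence of underlying categories.

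Then I would verify that $\iota$ is natural in $\mathscr C$: for any strict symmetric monoidal $F\colon\mathscr C\to\mathscr D$ the equality $F(\bm1)=\bm1$ gives $\Phi(F)\iota_{\mathscr C}(X)=\iota_{\mathscr D}F(X)$ on objects, and the construction of $\Phi(F)$ on morphisms trivially matches $F$ under the identifications above. Thus we have a commutative square of functors whose horizontal arrows $\iota_{\mathscr C}$, $\iota_{\mathscr D}$ are equivalences of underlying categories for every $\mathscr C$, $\mathscr D$. Applying $2$-out-of-$3$ to the underlying equivalences in this square shows that a strict symmetric monoidal functor $F$ is an underlying equivalence if and only if $\Phi(F)$ is, proving both preservation and reflection. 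There is no real obstacle here; the only point to be careful about is the distinction flagged in Warning~\ref{warn:Phi-hom-sets}, namely that morphisms in $\Phi(\mathscr C)$ carry their source and target as extra data, but this is precisely handled by fixing the tensor products above before identifying morphisms.
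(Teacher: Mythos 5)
Your proposal is correct and follows essentially the same route as the paper: construct a natural equivalence between the underlying categories of $\mathscr C$ and $\Phi(\mathscr C)$ and then conclude by $2$-out-of-$3$. The only difference is that you spell out the functor $\iota_{\mathscr C}\colon X\mapsto(X,\bm1,\bm1,\dots)$ and check fullness, faithfulness, essential surjectivity, and naturality explicitly, where the paper simply cites \cite[Remark~11.4]{schwede-k-theory} for this equivalence.
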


\section{From parsummable categories to permutative categories}\label{sec:sigma-construction}
In this section we will construct the functor $\Sigma\colon\cat{ParSumCat}\to\cat{PermCat}$ that we will later prove to be the desired homotopy inverse of $\Phi$.

Let us begin by giving some intuition: if $\mathcal C$ is any parsummable category, then the tensor product on $\Sigma\mathcal C$ should somehow be obtained from the sum in $\mathcal C$. We have seen that while a given pair $(X,Y)\in\mathcal C^{\times 2}$ might not be summable, the sum $u_*X+v_*Y$ exists for all $u,v\in\mathcal M$ with disjoint images. Such a choice of $u,v$ is equivalent to a choice of an injection $\mu\colon\cat2\times\omega\rightarrowtail\omega$ via $u=\mu(1,\blank)$ and $v=\mu(2,\blank)$, in which case $u_*X+v_*Y=\mu_*(X,Y)$ by definition. This motivates thinking of $\mu_*\colon\mathcal C^{\times 2}\to\mathcal C$ for any $\mu\colon\cat2\times\omega\rightarrowtail\omega$ as a `generalized sum' and the tensor product we are looking after, and more generally of $\phi_*\colon\mathcal C^{\times m}\to\mathcal C$ for any injection $\phi\colon\bm m\times\omega\rightarrowtail\omega$ as an `$m$-ary tensor product.'

While each choice of $\mu$ as above indeed gives rise to a symmetric monoidal structure on the underlying category of $\mathcal C$ \cite[Proposition~5.7]{schwede-k-theory} (also see Construction~\ref{constr:comparison-schwede} below), it will usually not be strictly associative or unital on objects. Of course, we can appeal to Theorem~\ref{thm:perm-vs-sym} to strictify this into a permutative category, but the result is rather hard to deal with.

Instead, we will give a direct construction. Following the explicit description of the strictification in Theorem~\ref{thm:perm-vs-sym}, its objects should be finite sequences of objects in $\mathcal C$, with morphisms given by maps in $\mathcal C$ between iterated tensor products. By the above intuition, any injection $\phi\colon\bm m\times\omega\rightarrowtail\omega$ yields an `$m$-ary tensor product' $\phi_*$, and none of these is preferred over the others. This motivates allowing all such injections at the same time:

\begin{constr}
Let $\mathcal C$ be a parsummable category, let $m,n\ge 0$ be integers, and let $X_\bullet=(X_1,\dots, X_m)\in\mathcal C^{\times m}, Y_\bullet\mathrel=(Y_1,\dots,Y_n)\in\mathcal C^{\times n}$. We define
\begin{equation*}
\Sigma_{\mathcal C}(X_\bullet,Y_\bullet)\mathrel{:=}\{(\psi,f,\phi) : \phi\colon \bm{m}\times\omega\rightarrowtail\omega, \psi\colon\bm{n}\times\omega\rightarrowtail\omega,
f\colon \phi_*(X_\bullet)\to \psi_*(Y_\bullet)\}/\sim
\end{equation*}
where $(\psi,f,\phi)\sim(\psi',f',\phi')$ if and only if $f'=[\psi',\psi]\circ f\circ[\phi,\phi']$; here and in what follows we will usually omit the indices of the natural transformations from Lemma~\ref{lemma:generalized-structure-maps} (and the structure isomorphisms of the $E\mathcal M$-action) as long as no confusion can arise from this.

We omit the easy verification that $\sim$ is an equivalence relation (which uses Lemma~\ref{lemma:generalized-structure-maps}). In particular, the above condition is (by symmetry of the relation) equivalent to $f=[\psi,\psi']\circ f'\circ[\phi',\phi]$.
\end{constr}

\begin{lemma}\label{lemma:free-to-choose-phi-psi}
In the situation above let $\phi\colon\bm{m}\times\omega\rightarrowtail\omega$ and $\psi\colon\bm{n}\times\omega\rightarrowtail\omega$ be arbitrary injections. Then
\begin{align*}
\Hom_{\mathcal C}(\phi_*(X_\bullet),\psi_*(Y_\bullet))&\to\Sigma_{\mathcal C}(X_\bullet,Y_\bullet)\\
f&\mapsto [\psi,f,\phi]
\end{align*}
is bijective.
\begin{proof}
It is immediate from the definition of the equivalence relation that this is injective. To see that it is also surjective, we observe that a general element on the right hand side is of the form $[\psi',f',\phi']$ with $\phi'\colon\bm{m}\times\omega\rightarrowtail\omega$, $\psi'\colon\bm{n}\times\omega\rightarrowtail\omega$ and $f'\colon\phi_*'(X_\bullet)\to\psi_*'(Y_\bullet)$. Then $[\psi',f',\phi']=[\psi,[\psi,\psi']\circ f'\circ[\phi',\phi],\phi]$
by the above reformulation of the equivalence relation, so $[\psi',f',\phi']$ is in particular contained in the image.
\end{proof}
\end{lemma}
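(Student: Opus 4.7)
The plan is to verify injectivity and surjectivity directly from the defining equivalence relation, with the main tools being the identity $[\phi,\phi]=\id$ and the composition law $[\theta,\psi]\circ[\psi,\phi]=[\theta,\phi]$ from Lemma~\ref{lemma:generalized-structure-maps}, parts~(5) and~(6).

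For injectivity, I would start with two morphisms $f,g\colon\phi_*(X_\bullet)\to\psi_*(Y_\bullet)$ such that $[\psi,f,\phi]=[\psi,g,\phi]$. Unwinding the definition of $\sim$ gives $g=[\psi,\psi]\circ f\circ[\phi,\phi]$, and since $[\psi,\psi]=\id_{\psi_*(Y_\bullet)}$ and $[\phi,\phi]=\id_{\phi_*(X_\bullet)}$ this immediately collapses to $g=f$.

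For surjectivity, given an arbitrary element represented by a triple $(\psi',f',\phi')$ with $f'\colon\phi'_*(X_\bullet)\to\psi'_*(Y_\bullet)$, the natural candidate preimage is
\begin{equation*}
f\mathrel{:=}[\psi,\psi']\circ f'\circ[\phi',\phi]\colon\phi_*(X_\bullet)\to\psi_*(Y_\bullet),
\end{equation*}
which is well-defined because $[\phi',\phi]\colon\phi_*(X_\bullet)\to\phi'_*(X_\bullet)$ and $[\psi,\psi']\colon\psi'_*(Y_\bullet)\to\psi_*(Y_\bullet)$ have matching sources and targets with $f'$. To see $[\psi,f,\phi]=[\psi',f',\phi']$, I would verify that $f'=[\psi',\psi]\circ f\circ[\phi,\phi']$ holds by direct computation: substituting the definition of $f$ and applying the composition law twice yields $[\psi',\psi]\circ[\psi,\psi']\circ f'\circ[\phi',\phi]\circ[\phi,\phi']=[\psi',\psi']\circ f'\circ[\phi',\phi']=f'$.

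There is no real obstacle here beyond careful bookkeeping: one has to be attentive to sources and targets, since in the spirit of Warning~\ref{warn:Phi-hom-sets} a morphism in $\mathcal C$ does not by itself determine a representative of a class in $\Sigma_{\mathcal C}(X_\bullet,Y_\bullet)$ without also fixing the associated injections $\phi$ and $\psi$. Once this is tracked, the argument amounts to a two-line application of Lemma~\ref{lemma:generalized-structure-maps}.
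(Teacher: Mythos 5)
Your proof is correct and follows essentially the same route as the paper: injectivity falls out of the equivalence relation together with $[\phi,\phi]=\id$ and $[\psi,\psi]=\id$, and surjectivity is established by exhibiting the same canonical preimage $[\psi,\psi']\circ f'\circ[\phi',\phi]$ and checking the defining relation via the composition law from Lemma~\ref{lemma:generalized-structure-maps}. The only difference is cosmetic, in that you spell out the two lines the paper labels ``immediate.''
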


\begin{constr}
Let $\mathcal C$ be a parsummable category. We define $\Sigma(\mathcal C)$ to be the following (small) category: an object of $\Sigma(\mathcal C)$ is a finite sequence $X_\bullet=(X_1,\dots,X_m)$ of objects of $\mathcal C$; here $m=0$ is allowed, in which case we also denote the resulting object by $\epsilon$. If $Y_\bullet=(Y_1,\dots,Y_n)$ is another object, then
\begin{equation*}
\Hom_{\Sigma\mathcal C}(X_\bullet, Y_\bullet)\mathrel{:=}\Sigma_{\mathcal C}(X_\bullet,Y_\bullet).
\end{equation*}
If $Z_\bullet$ is yet another object, then the composition of $[\psi,f,\phi]\colon X_\bullet\to Y_\bullet$ and $[\rho,g,\theta]\colon\allowbreak Y_\bullet\to Z_\bullet$ is defined to be
\begin{equation*}
[\rho,g,\theta]\circ[\psi,f,\phi]\mathrel{:=}  [\rho,g\circ[\theta,\psi]\circ f,\phi].
\end{equation*}
If $F\colon\mathcal C\to\mathcal D$ is a morphism of parsummable categories, then we define the functor $\Sigma(F)\colon\Sigma(\mathcal C)\to\Sigma(\mathcal D)$ on objects via $\Sigma(F)(X_1,\dots,X_m)=(FX_1,\dots, FX_m)$ and on morphisms via $\Sigma(F)[\psi,f,\phi]=[\psi, Ff,\phi]$.
\end{constr}

\begin{prop}\label{prop:sigma-cat}
$\Sigma$ is a well-defined functor $\cat{ParSumCat}\to\cat{Cat}$.
\begin{proof}
Let us first show that $\Sigma(\mathcal C)$ is a well-defined category for any parsummable category $\mathcal C$, i.e.~the above composition is independent of the choice of representatives, and moreover associative and unital.

To this end, we let $(\psi,f,\phi)$ and $(\psi',f',\phi')$ represent the same morphism $X_\bullet\to Y_\bullet$, and we let $(\rho,g,\theta)$ and $(\rho',g',\theta')$ represent the same morphism $Y_\bullet\to Z_\bullet$. We have to show that
\begin{equation*}
(\rho, g\circ [\theta,\psi] \circ f ,\phi)\sim (\rho', g'\circ [\theta',\psi'] \circ f',\phi'),
\end{equation*}
i.e.~$g'\circ [\theta',\psi'] \circ f' =[\rho',\rho]\circ g\circ [\theta,\psi] \circ f\circ[\phi,\phi']$. But indeed,
\begin{equation*}
f\circ[\phi,\phi']=[\psi,\psi']\circ[\psi',\psi]\circ f\circ[\phi,\phi']=
[\psi,\psi']\circ f'
\end{equation*}
(where we have used Lemma~\ref{lemma:generalized-structure-maps} and that $(\psi,f,\phi)\sim(\psi',f',\phi')$, respectively), and similarly
\begin{equation*}
[\rho',\rho]\circ g = g'\circ [\theta',\theta].
\end{equation*}
Plugging this in yields
\begin{equation*}
[\rho',\rho]\circ g\circ [\theta,\psi] \circ f\circ[\phi,\phi']= g'\circ[\theta',\theta]\circ [\theta,\psi] \circ [\psi,\psi']\circ f'= g'\circ[\theta',\psi']\circ f'
\end{equation*}
as desired, where the final equation uses Lemma~\ref{lemma:generalized-structure-maps} again. This completes the proof that the composition is independent of the choice of representatives.

Next, we claim that for any $(X_1,\dots,X_m)$ the element $[\phi,\id,\phi]$ is a unit of $X_\bullet$ in $\Sigma(\mathcal C)$, where $\phi\colon\bm{m}\times\omega\rightarrowtail\omega$ is any injection. But indeed, if $(Y_1,\dots,Y_n)$ is any other object, then Lemma~\ref{lemma:free-to-choose-phi-psi} implies that any element of $\Hom_{\Sigma(\mathcal C)}(X_\bullet,Y_\bullet)$ can be written as $[\psi,f,\phi]$ for some $\psi\colon\bm{n}\times\omega\rightarrowtail\omega$ and $f\colon \phi_*(X_1,\dots,X_m)\to\psi_*(Y_1,\dots,Y_n)$. But then obviously, $[\psi,f,\phi][\phi,\id,\phi]=[\psi,f\circ [\phi,\phi]\circ\id,\phi]=[\psi,f,\phi]$ as desired, i.e.~$[\phi,\id,\phi]$ is a right unit. Analogously one proves that it is also a left unit. Moreover, a similar argument shows that the composition is associative, finishing the proof that $\Sigma(\mathcal C)$ is a well-defined category.

Now let $F\colon\mathcal C\to\mathcal D$ be any map of parsummable categories. We claim that $\Sigma(F)$ is a well-defined functor, i.e.~its action on morphisms is independent of the choice of representatives and compatible with compositions and the unit. But indeed, if we again let $(\psi,f,\phi),(\psi',f',\phi')$ be representatives of the same morphism $X_\bullet\to Y_\bullet$, then $f'=[\psi',\psi]\circ f \circ [\phi,\phi']$ and hence
\begin{equation*}
F(f')=F([\psi',\psi]_{Y_\bullet}\circ f \circ [\phi,\phi']_{X_\bullet})=[\psi',\psi]_{\Sigma(F)(Y_\bullet)}\circ (Ff)\circ [\phi,\phi']_{\Sigma(F)(X_\bullet)}
\end{equation*}
by universality of the natural transformations $[\psi',\psi]$ and $[\phi,\phi']$ (see Remark~\ref{rk:universally-natural-trafo}). Plugging this in, we immediately see that $[\psi, Ff,\phi]=[\psi',Ff',\phi']$ as desired, i.e.~$\Sigma(F)$ is independent of the choices of representatives. With this established, we consider any further morphism $[\rho,g,\theta]\colon Y_\bullet\to Z_\bullet$; by Lemma~\ref{lemma:free-to-choose-phi-psi} we may again assume that $\theta=\psi$, in which case
\begin{align*}
(\Sigma(F)[\rho,g,\psi])(\Sigma(F)[\psi,f,\phi])&=[\rho, Fg,\psi][\psi, Ff,\phi]=[\rho, (Fg)(Ff),\phi]\\
&=[\rho, F(gf),\phi]=\Sigma(F)[\rho,gf,\phi]\\
&=\Sigma(F)([\rho,g,\psi][\psi,f,\phi])
\end{align*}
as desired. Finally, $\Sigma(F)[\phi,\id,\phi]=[\phi, F(\id),\phi]=[\phi,\id,\phi]$ by definition, completing the argument that $\Sigma(F)$ is a functor.

It only remains to show that $\Sigma$ itself is a functor, which is immediate from the definitions.
\end{proof}
\end{prop}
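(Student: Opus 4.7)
The plan is to check, in order: (i) the composition law descends to equivalence classes, (ii) for any $\phi\colon\bm m\times\omega\rightarrowtail\omega$ the class $[\phi,\id,\phi]$ is a two-sided identity on $(X_1,\dots,X_m)$, (iii) composition is associative, (iv) $\Sigma(F)$ is well-defined on morphisms and preserves composition and identities, and (v) the assignment $\mathcal C\mapsto\Sigma(\mathcal C)$, $F\mapsto\Sigma(F)$ is functorial. The main organizational tool throughout is Lemma~\ref{lemma:free-to-choose-phi-psi}: whenever I need to compute a composite or to compare two morphisms, I first normalize the representatives so that the relevant injections agree, which reduces questions in $\Sigma(\mathcal C)$ to questions in $\mathcal C$.

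For (i), suppose $(\psi,f,\phi)\sim(\psi',f',\phi')$ and $(\rho,g,\theta)\sim(\rho',g',\theta')$. The defining relation gives $[\psi,\psi']\circ f'=f\circ[\phi,\phi']$ and $g'\circ[\theta',\theta]=[\rho',\rho]\circ g$. I would then compute
\begin{equation*}
[\rho',\rho]\circ\bigl(g\circ[\theta,\psi]\circ f\bigr)\circ[\phi,\phi']
= g'\circ[\theta',\theta]\circ[\theta,\psi]\circ[\psi,\psi']\circ f',
\end{equation*}
and collapse the central string using parts (5) and (6) of Lemma~\ref{lemma:generalized-structure-maps} (so that $[\theta',\theta]\circ[\theta,\psi]\circ[\psi,\psi']=[\theta',\psi']$), which is exactly the relation needed for the two composites to be equivalent.

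For (ii) and (iii), I would use Lemma~\ref{lemma:free-to-choose-phi-psi} to represent any morphism out of $(X_1,\dots,X_m)$ by a triple whose left injection is a prescribed $\phi$; the relation $[\phi,\phi]=\id$ then gives the unit axiom directly, and the same trick reduces the associativity identity to associativity of composition in $\mathcal C$ (after lining up middle injections in two steps). For (iv), well-definedness on morphisms uses universality of the transformations $[\psi,\phi]$ (Remark~\ref{rk:universally-natural-trafo}): any morphism $F$ of parsummable categories commutes with these transformations, so applying $F$ to $f'=[\psi',\psi]\circ f\circ[\phi,\phi']$ gives the analogous relation in $\mathcal D$. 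Preservation of composition and identities then again reduces, after normalizing middle injections, to the corresponding statements for $F$ as a functor of underlying categories. Functoriality of $\Sigma$ itself in (v) is immediate from the formulas.

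The only step that requires genuine care is (i): the bookkeeping of the four structure isomorphisms $[\psi',\psi]$, $[\phi,\phi']$, $[\rho',\rho]$, $[\theta',\theta]$ and their interaction with the connecting $[\theta,\psi]$ is the one place where a misplaced primed injection would silently derail the argument. Everything else is essentially formal once Lemma~\ref{lemma:free-to-choose-phi-psi} is used to avoid carrying around three independent injections per composable pair.
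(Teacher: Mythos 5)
Your proposal is correct and follows essentially the same route as the paper's own proof: the same central computation collapsing $[\theta',\theta]\circ[\theta,\psi]\circ[\psi,\psi']$ to $[\theta',\psi']$ for well-definedness of composition, the same normalization trick via Lemma~\ref{lemma:free-to-choose-phi-psi} for the unit, associativity, and functoriality of $\Sigma(F)$, and the same appeal to universality of the transformations $[\psi,\phi]$ for well-definedness of $\Sigma(F)$ on morphisms.
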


\begin{rk}\label{rk:middle-term-identity}
Let $X_\bullet=(X_1,\dots,X_m),Y_\bullet=(Y_1,\dots,Y_n)\in\Sigma(\mathcal C)$, let $\phi\colon\bm m\times\omega\rightarrowtail\omega,\psi\colon\bm n\times\omega\rightarrowtail\omega$ be any injections, and let $f\colon\phi_*(X_\bullet)\to\psi_*(Y_\bullet)$ be an isomorphism in $\mathcal C$. We claim that $[\psi,f,\phi]\colon X_\bullet\to Y_\bullet$ is an isomorphism in $\Sigma(\mathcal C)$. Indeed, $[\phi,f^{-1},\psi]$ is a morphism $Y_\bullet\to X_\bullet$, and $[\phi,f^{-1},\psi][\psi,f,\phi]=[\phi,f\circ f^{-1},\phi]=[\phi,\id,\phi]$ is the identity of $X_\bullet$, while $[\psi,f,\phi][\phi,f^{-1},\psi]$ is the identity of $Y_\bullet$ by an analogous computation.
\end{rk}

\begin{constr}
Let $\mathcal C$ be a parsummable category. We define $\blank\otimes\blank\colon \Sigma(\mathcal C)\times\Sigma(\mathcal C)\to\Sigma(\mathcal C)$ as follows: if $X_\bullet=(X_1,\dots,X_m)$, $Y_\bullet=(Y_1,\dots,Y_n)$ are objects of $\Sigma(\mathcal C)$, then $X_\bullet\otimes Y_\bullet\mathrel{:=}(X_1,\dots,X_m,Y_1,\dots,Y_n)$. If
\begin{equation*}
\alpha\colon (X_1,\dots,X_m)\to (Y_1,\dots,Y_n) \quad\text{and}\quad \beta\colon (X'_1,\dots,X'_{m'})\to (Y'_1,\dots,Y'_{n'})
\end{equation*}
are morphisms in $\Sigma(\mathcal C)$, then we define $\alpha\otimes\beta\colon X_\bullet\otimes X'_\bullet\to Y_\bullet\otimes Y'_\bullet$ as follows: we pick representatives $(\psi,f,\phi)$ of $\alpha$ and $(\rho,g,\theta)$ of $\beta$ such that $\im(\phi)\cap\im(\theta)=\varnothing$ and $\im(\psi)\cap\im(\rho)=\varnothing$, and we set
\begin{equation*}
[\psi,f,\phi]\otimes[\rho,g,\theta]\mathrel{:=}[\psi+\rho,f+g,\phi+\theta].
\end{equation*}
Here $f+g$ is the sum in the parsummable category, $\phi+\theta$ is the injection $\bm{(m+m')}\times\omega\to\omega$ with
\begin{equation*}
(\phi+\theta)(i,x)=\begin{cases}
\phi(i,x) & \text{if }i\le m\\
\theta(i-m,x) & \text{otherwise},
\end{cases}
\end{equation*}
and $\psi+\rho$ is defined analogously.

Finally, if $(X_1,\dots,X_m), (Y_1,\dots,Y_n)\in\Sigma(\mathcal C)$, then we define $\tau_{X_\bullet,Y_\bullet}\colon X_\bullet\otimes Y_\bullet \to Y_\bullet \otimes X_\bullet$ as $[\bar\phi,\id,\phi]$, where $\phi$ is any injection $\bm{(m+n)}\times\omega\rightarrowtail
\omega$ and $\bar\phi\colon\bm{(n+m)}\times\omega\rightarrowtail\omega$ is defined via
\begin{equation*}
\bar\phi(i,x)=\begin{cases}
\phi(i+m,x) & \text{if $i\le n$}\\
\phi(i-n,x) & \text{otherwise}.
\end{cases}
\end{equation*}
\end{constr}

\begin{prop}
The above is well-defined and makes $\Sigma(\mathcal C)$ into a permutative category. If $F\colon\mathcal C\to\mathcal D$ is a morphism of parsummable categories, then $\Sigma(F)$ is strict symmetric monoidal with respect to the above permutative structures. This way, $\Sigma$ lifts to a functor $\cat{ParSumCat}\to\cat{PermCat}$.
\begin{proof}
Let us first show that $\otimes$ is well-defined on morphisms. To this end we first observe that $[\psi+\rho,f+g,\phi+\theta]$ indeed defines a map $(X_1,\dots, X_m,X'_1,\dots,X'_{m'})\to (Y_1,\dots, Y_n,Y'_1,\dots,Y'_{n'})$, i.e.~$f+g$ is a map $(\phi+\theta)_*(X_1,\dots, X_m,X'_1,\dots,X'_{m'})\to(\psi+\rho)_* (Y_1,\dots, Y_n,Y'_1,\dots,Y'_{n'})$: namely, by Lemma~\ref{lemma:generalized-action}
\begin{equation*}
(\phi+\theta)_*(X_1,\dots, X_m,X'_1,\dots,X'_{m'})=\phi_*(X_1,\dots,X_m)+\theta_*(X'_1,\dots,X'_{m'})
\end{equation*}
and
\begin{equation*}
(\psi+\rho)_*(Y_1,\dots,Y_n,Y'_1,\dots,Y'_{n'})=\psi_*(Y_1,\dots,Y_n)+\rho_*(Y'_1,\dots, Y'_{n'}).
\end{equation*}
Now let us prove that it is independent of the choices of representatives: to this end, we let $(\psi',f',\phi')$ be another representative of $[\psi,f,\phi]$ and we let $(\rho',g',\theta')$ be another representative of $[\rho,g,\theta]$ such that $\im(\phi')\cap\im(\theta')=\varnothing$ and $\im(\psi')\cap\im(\rho')=\varnothing$. We have to show that $(\psi+\rho,f+g,\phi+\theta)\sim(\psi'+\rho',f'+g',\phi'+\theta')$, i.e.~$f'+g'= [\psi'+\rho',\psi+\rho] \circ(f+g)\circ[\phi+\theta,\phi'+\theta']$. To this end, we observe that by Lemma~\ref{lemma:generalized-structure-maps}
\begin{equation*}
[\phi+\theta,\phi'+\theta']_{(X_1,\dots,X_m,X'_1,\dots,X'_{m'})}=[\phi,\phi']_{(X_1,\dots,X_m)}+[\theta,\theta']_{(X'_1,\dots,X'_{m'})}
\end{equation*}
and $[\psi'+\rho',\psi+\rho]=[\psi',\psi]+[\rho',\rho]$. Thus,
\begin{equation*}
[\psi'+\rho',\psi+\rho] \circ(f+g)\circ[\phi+\theta,\phi'+\theta']=([\psi',\psi]\circ f\circ[\phi,\phi'])+([\rho',\rho]\circ g\circ[\theta,\theta'])=f' + g',
\end{equation*}
where we have used that $+$ is a functor and that $(\psi,f,\phi)\sim(\psi',f',\phi')$ and $(\rho,g,\theta)\sim(\rho',g',\theta')$. This completes the proof that $\otimes$ is well-defined. With this established, functoriality is once again obvious.

Next, let us show that $\otimes$ is strictly unital with unit the empty sequence $\epsilon$. This is obvious on objects. On morphisms, we will only show that it is a left unit, the other case being analogous. To this end, we let $[\psi,f,\phi]$ be any morphism $X_\bullet\to Y_\bullet$. Then the identity of $\epsilon$ is given by $[\varnothing,\id_0,\varnothing]$, where we write $\varnothing$ for the unique injection $\bm{0}\times\omega=\varnothing\rightarrowtail\omega$. Obviously $\im(\varnothing)\cap\im(\phi)=\varnothing=\im(\varnothing)\cap\im(\psi)$, and hence $[\varnothing,\id_0,\varnothing]\otimes[\psi,f,\phi]=[\varnothing+\psi,\id_0+f,\varnothing+\phi]$. But by definition $\varnothing+\psi=\psi,\varnothing+\phi=\phi$, and moreover $\id_0+f=f$ since $0$ is a strict unit for the addition in $\mathcal C$, so that $[\varnothing,\id_0,\varnothing]\otimes[\psi,f,\phi]=[\psi,f,\phi]$ as desired.

The associativity of $\otimes$ is again obvious on objects. To see that
\begin{equation}\label{eq:associativity-tensor}
[\psi,f,\phi]\otimes([\rho,g,\theta]\otimes[\zeta,h,\eta])=([\psi,f,\phi]\otimes[\rho,g,\theta])\otimes[\zeta,h,\eta]
\end{equation}
we may assume without loss of generality that $\im(\phi)$,$\im(\theta)$, and $\im(\eta)$ are pairwise disjoint, and that $\im(\psi)$, $\im(\rho)$, and $\im(\zeta)$ are pairwise disjoint. In this case one immediately checks from the definition that the left hand side of $(\ref{eq:associativity-tensor})$ is given by $[\psi+(\rho+\zeta),f+(g+h), \phi+(\theta+\eta)]$ while the right hand side is given by $[(\psi+\rho)+\zeta,(f+g)+h,(\phi+\theta)+\eta]$. But $\phi+(\theta+\eta)=(\phi+\theta)+\eta$ and $\psi+(\rho+\zeta)=(\psi+\rho)+\zeta$ by direct inspection, and moreover $f+(g+h)=(f+g)+h$ by strict associativity of the sum in $\mathcal C$. This completes the proof that $\otimes$ is associative.

Now we will show that $\tau$ is well-defined. To this end, we first observe that $[\bar\phi,\id,\phi]$ indeed defines a map $(X_1,\dots,X_m,Y_1,\dots,Y_n)\to (Y_1,\dots,Y_n,X_1,\dots,X_m)$ as
\begin{align*}
&\phi_*(X_1,\dots,X_m,Y_1,\dots,Y_n)\\
\quad&=\phi(1,\blank)_*(X_1)+\cdots+\phi(m,\blank)_*(X_m)+\phi(m+1,\blank)_*(Y_1)+\cdots+\phi(m+n,\blank)_*(Y_n)\\
&=\bar\phi(n+1,\blank)_*(X_1)+\cdots+\bar\phi(n+m,\blank)_*(X_m)+\bar\phi(1,\blank)_*(Y_1)+\cdots+\bar\phi(n,\blank)_*(Y_n)\\
&=\bar\phi(1,\blank)_*(Y_1)+\cdots+\bar\phi(n,\blank)_*(Y_n)+\bar\phi(n+1,\blank)_*(X_1)+\cdots+\bar\phi(n+m,\blank)_*(X_m)\\
&=\bar\phi_*(Y_1,\dots,Y_n,X_1,\dots,X_m).
\end{align*}
To see that $\tau_{X_\bullet,Y_\bullet}$ is independent of the choice of $\phi$, we let $\phi'\colon\bm{(m+n)}\times\omega\rightarrowtail\omega$ be another injection. An analogous calculation to the above then shows that
\begin{equation*}
[\phi',\phi]_{(X_1,\dots,X_m,Y_1,\dots,Y_n)}=[\bar\phi',\bar\phi]_{(Y_1,\dots,Y_n,X_1,\dots,X_m)}
\end{equation*}
and hence $[\bar\phi',\id,\phi']=[\bar\phi, [\bar\phi,\bar\phi']\circ[\phi',\phi] ,\phi]=[\bar\phi,\id,\phi]$ as desired.

Next, let us check that $\tau$ is natural. To this end, we let $[\psi,f,\phi]$ and $[\rho,g,\theta]$ represent morphisms $(X_1,\dots,X_m)\to (Y_1,\dots,Y_n)$ and $(X'_1,\dots,X'_{m'})\to (Y'_1,\dots,Y'_{n'})$, respectively, where $\im(\phi)\cap\im(\theta)=\varnothing$ and $\im(\psi)\cap\im(\rho)=\varnothing$. We may then choose the injection $\bm{(m+m')}\times\omega\rightarrowtail\omega$ from the definition of $\tau_{X_\bullet,X'_\bullet}$ to be $\phi+\theta$, and thus one immediately checks that $\tau_{X_\bullet,X'_\bullet}=[\theta+\phi,\id,\phi+\theta]$; analogously, we get $\tau_{Y_\bullet,Y'_\bullet}=[\rho+\psi,\id,\psi+\rho]$. The commutativity of the naturality square
\begin{equation*}
\begin{tikzcd}
X_\bullet \otimes X'_\bullet \arrow[r, "\tau"]\arrow[d, "{[\psi,f,\phi]\otimes[\rho,g,\theta]}"'] & X'_\bullet\otimes X_\bullet\arrow[d, "{[\rho,g,\theta]\otimes[\psi,f,\phi]}"]\\
X'_\bullet\otimes X_\bullet\arrow[r, "\tau"'] & X_\bullet \otimes X'_\bullet
\end{tikzcd}
\end{equation*}
is therefore equivalent to the assertion $[\rho+\psi,g+f,\theta+\phi][\theta+\phi,\id,\phi+\theta]=[\rho+\psi,\id,\psi+\rho][\psi+\rho,f+g,\phi+\theta]$, which itself is immediate from the definition of the composition together with the strict commutativity of the sum in $\mathcal C$. This completes the argument that $\tau$ is a natural transformation.

To finish the proof that this makes $\Sigma(\mathcal C)$ into a permutative category, it remains to check the usual coherence conditions on $\tau$:

\emph{$\tau$ is self-inverse:} let $(X_1,\dots,X_m),(Y_1,\dots,Y_n)$ be objects of $\Sigma(\mathcal C)$ and let $\phi\colon\bm{(m+n)}\times\omega\to\omega$ be any injection, so that $\tau_{X_\bullet, Y_\bullet}=[\bar\phi,\id,\phi]$. But $\bar\phi$ is an injection $\bm{(n+m)}\times\omega\to\omega$ and $\bar{\bar\phi}=\phi$, so that $\tau_{Y_\bullet,X_\bullet}=[\phi,\id,\bar\phi]$ and hence indeed $\tau_{Y_\bullet,X_\bullet}\tau_{X_\bullet,Y_\bullet}=[\phi,\id,\phi]$ which is the identity of $X_\bullet\otimes Y_\bullet$.

\emph{$\tau$ is unital:} let $(X_1,\dots,X_m)\in\Sigma(\mathcal C)$. We have to show that $\tau_{\epsilon,X_\bullet}=\id$. But indeed, if $\phi\colon\bm{(0+m)}\times\omega\to\omega$ is any injection, then $\bar\phi=\phi$ and hence $\tau_{\epsilon,X_\bullet}=[\phi,\id,\phi]$ is the identity. Analogously, one shows that also $\tau_{X_\bullet,\epsilon}=\id$.

\emph{$\tau$ is associative:} let $(X_1,\dots, X_m), (Y_1,\dots,Y_n), (Z_1,\dots, Z_o)\in\Sigma(\mathcal C)$. We have to show that the diagram
\begin{equation}\label{diag:tau-associative}
\begin{tikzcd}[column sep=small]
X_\bullet\otimes Y_\bullet\otimes Z_\bullet\arrow[rd, bend right=10pt, "\id_{X_\bullet}\otimes\tau_{Y_\bullet,Z_\bullet}"']\arrow[rr, "\tau_{X_\bullet\otimes Y_\bullet, Z_\bullet}"] && Z_\bullet\otimes X_\bullet\otimes Y_\bullet\\
& X_\bullet\otimes Z_\bullet\otimes Y_\bullet\arrow[ru, bend right=10pt, "\tau_{X_\bullet,Z_\bullet}\otimes \id_{Y_\bullet}"']
\end{tikzcd}
\end{equation}
commutes. To this end, we pick an injection $\phi\colon\bm{(m+n+o)}\times\omega\rightarrowtail\omega$. Then
\begin{align*}
\tau_{X_\bullet\otimes Y_\bullet, Z_\bullet}&= [\phi|_{\{m+n+1,\dots,m+n+o\}}+\phi|_{\{1,\dots,m+n\}},\id,\phi]\\
\tau_{Y_\bullet, Z_\bullet} &= [\phi|_{\{m+n+1,\dots,m+n+o\}}+\phi|_{\{m+1,\dots,m+n\}},\id,\phi|_{\{m+1,\dots,m+n+o\}}]\\
\tau_{X_\bullet, Z_\bullet} &=
[\phi|_{\{m+n+1,\dots,m+n+o\}}+\phi|_{\{1,\dots,m\}},\id, \phi|_{\{1,\dots,m\}}+\phi|_{\{m+n+1,\dots,m+n+o\}}]\\
\id_{X_\bullet}&=[\phi|_{\{1,\dots,m\}},\id,\phi|_{\{1,\dots,m\}}]\\
\id_{Y_\bullet}&=[\phi|_{\{m+1,\dots,m+n\}},\id, \phi|_{\{m+1,\dots,m+n\}}]
\end{align*}
(by slight abuse of notation, we write $\phi|_{\{m+n+1,\dots,m+n+o\}}$ for the injection $\bm o\times\omega\rightarrowtail\omega$ sending $(i,x)$ to $\phi(m+n+i,x)$ for all $1\le i\le o$, $x\in\omega$, and similarly for the other cases).
Plugging this in, the commutativity of $(\ref{diag:tau-associative})$ is equivalent to the assertion
\begin{align*}
&[\phi|_{\{m+n+1,\dots,m+n+o\}}+\phi|_{\{1,\dots,m+n\}},\id,\phi]\\
&\quad=[\phi|_{\{m+n+1,\dots,m+n+o\}}+\phi|_{\{1,\dots,m\}}+\phi|_{\{m+1,\dots,m+n\}},(\id+\id)(\id+\id),\\
&\quad\phantom{{}=[}\phi|_{\{1,\dots,m\}}+\phi|_{\{m+1,\dots,m+n+o\}}]
\end{align*}
which in turn follows immediately from the fact that the sum in $\mathcal C$ is functorial together with the evident identities $\phi|_{\{m+n+1,\dots,m+n+o\}}+\phi|_{\{1,\dots,m\}}+\phi|_{\{m+1,\dots,m+n\}}=\phi|_{\{m+n+1,\dots,m+n+o\}}+\phi|_{\{1,\dots,m+n\}}$ and $\phi|_{\{1,\dots,m\}}+\phi|_{\{m+1,\dots,m+n+o\}}=\phi$.
Altogether, we have shown that $\Sigma(\mathcal C)$ is indeed a well-defined permutative category.

Finally, we have to show that $\Sigma(F)$ is a strict symmetric monoidal functor, i.e. it strictly commutes with $\otimes$ and with $\tau$, and it strictly preserves the tensor unit. It is obvious from the definition that $\Sigma(F)$ commutes with $\otimes$ on objects and that $\Sigma(F)(\epsilon)=\epsilon$. If now $[\psi,f,\phi]\colon X_\bullet\to Y_\bullet$ and $[\psi',f',\phi']\colon X'_\bullet\to Y'_\bullet$ are any two morphisms, then we have to prove that
\begin{equation*}
\Sigma(F)([\psi,f,\phi]\otimes[\psi',f',\phi'])=(\Sigma(F)[\psi,f,\phi])\otimes(\Sigma(F)[\psi',f',\phi']).
\end{equation*}
For this we may assume without loss of generality that $\im(\phi)\cap\im(\phi')=\varnothing$ and $\im(\psi)\cap\im(\psi')=\varnothing$. In this case, the left hand side evaluates to $\Sigma(F)[\psi+\psi',f+g,\phi+\phi']=[\psi+\psi', F(f+g),\phi+\phi']$ whereas the right hand side evaluates to $[\psi+\psi', Ff+Fg,\phi+\phi']$. These are in fact equal as $F$ strictly preserves the sum in $\mathcal C$ by assumption.

In order to check the compatibility of $\Sigma(F)$ with $\tau$ we have to show that for any $(X_1,\dots,X_m), (Y_1,\dots,Y_n)\in\Sigma(\mathcal C)$ the equality $\Sigma(F)(\tau_{X_\bullet,Y_\bullet})=\tau_{\Sigma(F)(X_\bullet), \Sigma(F)(Y_\bullet)}$ holds in $\Sigma(\mathcal D)$. But if $\phi\colon\bm{(m+n)}\times\omega\to\omega$ is any injection, then the left hand side is given by $\Sigma(F)[\bar\phi,\id,\phi]=[\bar\phi,F(\id),\phi]$ while the right hand side is given by $[\bar\phi,\id,\phi]$. Thus, this is simply an instance of functoriality of $F$. This completes the argument that $\Sigma(F)$ is strict symmetric monoidal. Since the forgetful functor from permutative categories and \emph{strict} symmetric monoidal functors to ordinary categories is faithful, $\Sigma$ remains a functor when viewed as mapping into $\cat{PermCat}$. This completes the proof of the proposition.
\end{proof}
\end{prop}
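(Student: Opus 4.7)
The plan is to check each piece of the permutative structure in turn, using as the central technical device Lemma~\ref{lemma:free-to-choose-phi-psi}, which lets me replace the representing injections of a morphism by any chosen pair. This makes it harmless to assume disjointness conditions on injections whenever convenient and reduces most verifications to the partition formulas of Lemmas~\ref{lemma:generalized-action} and~\ref{lemma:generalized-structure-maps}.

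First I would show that $\otimes$ is well-defined on morphisms. Given representatives $(\psi,f,\phi)$ and $(\rho,g,\theta)$ with $\im(\phi)\cap\im(\theta) = \varnothing$ and $\im(\psi)\cap\im(\rho)=\varnothing$, the correct source and target of $f+g$ follow from the partition formula of Lemma~\ref{lemma:generalized-action}-$(\ref{item:lga-partition})$; independence from the choice of representatives follows from the analogous partition identity for the structure isomorphisms in Lemma~\ref{lemma:generalized-structure-maps}, together with bifunctoriality of $+$. Strict unitality and associativity of $\otimes$ are then immediate on objects by concatenation of tuples, and on morphisms reduce to the identities $\varnothing+\phi=\phi$ and $(\phi+\theta)+\eta=\phi+(\theta+\eta)$ for injections, combined with the corresponding strict properties of $+$ in $\mathcal C$.

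Next, to handle the symmetry $\tau_{X_\bullet,Y_\bullet}=[\bar\phi,\id,\phi]$, I would first check $\phi_*(X_\bullet,Y_\bullet)=\bar\phi_*(Y_\bullet,X_\bullet)$ by rewriting each side as a sum indexed by $\bm{(m+n)}$ and invoking strict commutativity of $+$, then verify independence of $\phi$ via Lemma~\ref{lemma:generalized-structure-maps} after noting that $[\bar\phi',\bar\phi]$ equals $[\phi',\phi]$ as a sum of pointwise restrictions, up to re-indexing. For naturality, I would choose the ambient injection in the definition of $\tau_{X_\bullet,X'_\bullet}$ to coincide with the $\phi+\theta$ coming from the chosen representatives of the two morphisms being tensored; both sides of the naturality square then simplify to triples whose middle entries are $f+g$ and $g+f$ respectively, which agree by strict commutativity of $+$.

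The coherence axioms fall out in the same style: self-inverse follows from $\bar{\bar\phi}=\phi$; unitality from $\bar\phi=\phi$ when $m=0$ or $n=0$; and the hexagon reduces, after choosing a single injection $\phi\colon\bm{(m+n+o)}\times\omega\rightarrowtail\omega$ and expressing every map in the diagram through the three restrictions of $\phi$ to the blocks $\{1,\dots,m\}$, $\{m+1,\dots,m+n\}$, $\{m+n+1,\dots,m+n+o\}$, to a combinatorial identity of injections combined with bifunctoriality of $+$. This three-way restriction bookkeeping in the hexagon is the main obstacle, but it is a purely formal manipulation once the setup is in place. The concluding claim that $\Sigma(F)$ is strict symmetric monoidal is then routine: strict preservation of $\otimes$ on morphisms uses only that $F$ preserves $+$ on the nose, while compatibility with $\tau$ and with the unit $\epsilon$ is automatic because their defining representatives involve only identity morphisms of $\mathcal C$.
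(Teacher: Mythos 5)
Your outline follows the paper's own proof essentially step by step: both use Lemma~\ref{lemma:free-to-choose-phi-psi} to reduce to representatives with disjoint images, deduce well-definedness and strictness of $\otimes$ from the partition formulas of Lemmas~\ref{lemma:generalized-action} and~\ref{lemma:generalized-structure-maps}, define $\tau_{X_\bullet,Y_\bullet}$ via a single ambient injection and its twist $\bar\phi$, check naturality by choosing the ambient injection to be $\phi+\theta$, handle the coherence axioms through $\bar{\bar\phi}=\phi$ and block restrictions of one injection on $\bm{(m+n+o)}\times\omega$, and note that $\Sigma(F)$ is strict because $F$ strictly preserves $+$ and $F(\id)=\id$. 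This is the same proof, just told in condensed form.
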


Next, we will give a description of certain coherence isomorphisms in $\Sigma(\mathcal C)$, that will become useful at several points later:

\begin{lemma}\label{lemma:sigma-coherence}
Let $\mathcal C$ be a parsummable category, let $X_1,\dots,X_m\in\mathcal C$ and let $\sigma\in\Sigma_m$. Then the coherence isomorphism
\begin{equation*}
(X_1,\dots,X_m)=\bigotimes_{i=1}^m (X_i)\to\bigotimes_{i=1}^m (X_{\sigma^{-1}(i)})= (X_{\sigma^{-1}(1)},\dots,X_{\sigma^{-1}(m)})
\end{equation*}
associated to the permutation $\sigma$ and the objects $(X_1),\dots,(X_m)$ of $\Sigma(\mathcal C)$ is given by $[\phi,\id,\phi\circ(\sigma\times\id)]$ where $\phi\colon\bm{m}\times\omega\rightarrowtail\omega$ is any injection.
\begin{proof}
We first observe that by Lemma~\ref{lemma:generalized-action}
\begin{equation*}
\big(\phi\circ(\sigma\times\id)\big)_*(X_1,\dots,X_m)
= \phi_*(X_{\sigma^{-1}(1)},\dots,X_{\sigma^{-1}(m)}),
\end{equation*}
so $[\phi,\id,\phi\circ(\sigma\times\id)]$ indeed defines a map $(X_1,\dots,X_m)\to (X_{\sigma^{-1}(1)},\dots, X_{\sigma^{-1}(m)})$.

Next, we will prove the claim in the special case that $\sigma=(k\;k+1)$ for some $1\le k<m$. Then the coherence isomorphism associated to $\sigma$ is by definition (see Theorem~\ref{thm:permutative-operadic-description})
\begin{equation}\label{eq:coherence-transposition}
\id_{(X_1,\dots,X_{k-1})}\otimes \tau_{(X_k),(X_{k+1})}\otimes \id_{(X_{k+2},\dots,X_m)}
\end{equation}
Now $\phi|_{\{k,k+1\}}$ is an injection $\bm{(1+
1)}\times\omega\rightarrowtail\omega$, thus
\begin{equation*}
\tau_{(X_k),(X_{k+1})}=[\overline{\phi|_{\{k,k+1\}}},\id,\phi|_{\{k,k+1\}}]=[\phi|_{\{k,k+1\}}\circ\big((1\;2)\times\id\big),\id,\phi|_{\{k,k+1\}}].
\end{equation*}
On the other hand
\begin{align*}
\id_{(X_1,\dots,X_{k-1})}&=[\phi|_{\{1,\dots,k-1\}},\id,\phi|_{\{1,\dots,k-1\}}]\\
\id_{(X_{k+2},\dots,X_{m})}&=[\phi|_{\{k+2,\dots,m\}},\id,\phi|_{\{k+2,\dots,m\}}],
\end{align*}
and hence $(\ref{eq:coherence-transposition})$ agrees with
\begin{align*}
&[\phi|_{\{1,\dots,k-1\}}+\big(\phi|_{\{k,k+1\}}\circ\big((1\;2)\times\id\big)\big)+\phi|_{\{k+2,\dots,m\}},\id,\\
&\qquad\qquad\phi|_{\{1,\dots,k-1\}}+\phi|_{\{k,k+1\}}+\phi|_{\{k+2,\dots,m\}}]\\
&\quad=[\phi\circ\big((k\;k+1)\times\id\big),\id,\phi]
\end{align*}
by definition of the monoidal structure, where we have used that the images of the restrictions of $\phi$ are suitably disjoint. The claim now follows by replacing $\phi$ with $\phi\circ\big((k\;k+1)\times\id\big)$.

If $\sigma$ is now a general permutation, then we write $\sigma=\pi_r\cdots\pi_1$ such that each $\pi_i$ is a transposition exchanging two adjacent elements of $\bm m$, and we define $\sigma_i\mathrel{:=}\pi_i\cdots\pi_1$. Then the coherence isomorphism $\bigotimes_{i=1}^m(X_i)\to\bigotimes_{i=1}^m(X_{\sigma^{-1}(i)})$ associated to $\sigma$ is (by construction) the composition
\begin{equation*}
\bigotimes_{i=1}^m (X_i)\to
\bigotimes_{i=1}^m (X_{\sigma_1^{-1}(i)})\to
\bigotimes_{i=1}^m (X_{\sigma_2^{-1}(i)})\to
\cdots\to
\bigotimes_{i=1}^m (X_{\sigma_r^{-1}(i)})=
\bigotimes_{i=1}^m (X_{\sigma^{-1}(i)})
\end{equation*}
where the $i$-th arrow is the coherence isomorphism associated to the permutation $\pi_i$. Applying the above special case for the injection $\phi\circ\big((\pi_r\cdots\pi_{i+1})\times\id\big)$ we therefore see that the $i$-th arrow is given by $[\phi\circ\big((\pi_r\cdots\pi_{i+1})\times\id\big),\id,\phi\circ\big((\pi_r\cdots\pi_i)\times\id\big)]$. The composition of these is just $[\phi,\id,\phi\circ\big((\pi_r\cdots\pi_1)\times\id\big)]=[\phi,\id,\phi\circ(\sigma\times\id)]$, which completes the proof of the lemma.
\end{proof}
\end{lemma}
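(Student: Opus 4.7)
The plan is to reduce to the case of an adjacent transposition $\sigma=(k\ k+1)$ via Theorem~\ref{thm:permutative-operadic-description}, verify that case by unwinding the definitions of $\tau$ and $\otimes$ in $\Sigma(\mathcal C)$, and then leverage the freedom in choosing representatives of morphisms in $\Sigma(\mathcal C)$ to handle the inductive step for a general permutation.

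Before anything else I would quickly make two preliminary checks. First, applying Lemma~\ref{lemma:generalized-action}(\ref{item:lga-perm}) to the bijection $\sigma^{-1}\colon\bm m\to\bm m$ gives $(\phi\circ(\sigma\times\id))_*(X_\bullet)=\phi_*(X_{\sigma^{-1}(\bullet)})$, so that $[\phi,\id,\phi\circ(\sigma\times\id)]$ is a bona fide morphism $(X_1,\dots,X_m)\to(X_{\sigma^{-1}(1)},\dots,X_{\sigma^{-1}(m)})$ in $\Sigma(\mathcal C)$. Second, Lemma~\ref{lemma:generalized-structure-maps}(4) in the form $[\phi_1,\phi_0]_{X_{\sigma^{-1}(\bullet)}}=[\phi_1\circ(\sigma\times\id),\phi_0\circ(\sigma\times\id)]_{X_\bullet}$ feeds directly into the definition of $\sim$ to show that $[\phi,\id,\phi\circ(\sigma\times\id)]$ does not depend on $\phi$; hence it will suffice, at every stage of the argument, to exhibit any convenient choice of $\phi$.

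For $\sigma=(k\ k+1)$, Theorem~\ref{thm:permutative-operadic-description} identifies the coherence map with $\id_{(X_1,\dots,X_{k-1})}\otimes\tau_{(X_k),(X_{k+1})}\otimes\id_{(X_{k+2},\dots,X_m)}$. Pick an arbitrary injection $\psi\colon\bm m\times\omega\rightarrowtail\omega$ and represent each tensor factor using restrictions of $\psi$ to the appropriate subsets of $\bm m$; these have pairwise disjoint images by construction, so the tensor-product formula on morphisms applies term by term. Noting that for the $(1+1)$-ary injection $\psi|_{\{k,k+1\}}$ the swap $\bar{\phantom{x}}$ is precisely post-composition with $(1\;2)\times\id$, the composite collapses to $[\psi\circ((k\ k+1)\times\id),\id,\psi]$; setting $\phi\mathrel{:=}\psi\circ((k\ k+1)\times\id)$ and invoking $((k\ k+1))^2=\id$ rewrites this as $[\phi,\id,\phi\circ((k\ k+1)\times\id)]$, which is the claim.

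For a general $\sigma\in\Sigma_m$, I would write $\sigma=\pi_r\cdots\pi_1$ with adjacent transpositions $\pi_i$ and proceed by induction on $r$, noting that the coherence map for $\sigma$ is by construction the composite of the coherence maps for the $\pi_i$ applied at each step to the intermediate tuple $(X_{(\pi_{i-1}\cdots\pi_1)^{-1}(\bullet)})$. By the transposition case together with independence of $\phi$, I can freely represent the $i$-th factor as $[\phi_i,\id,\phi_i\circ(\pi_i\times\id)]$ for any $\phi_i$; picking $\phi_i\mathrel{:=}\phi_{i-1}\circ(\pi_i\times\id)$ makes $\phi_i\circ(\pi_i\times\id)=\phi_{i-1}$, so that in the composition formula $[\rho,g,\theta]\circ[\psi,f,\phi]=[\rho,g\circ[\theta,\psi]\circ f,\phi]$ the cross-term $[\phi_i\circ(\pi_i\times\id),\phi_{i-1}]$ becomes an identity. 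The telescope then collapses to something of the form $[\phi,\id,\phi\circ(\sigma\times\id)]$, and a final appeal to independence of $\phi$ finishes the argument. The main obstacle here is purely bookkeeping: keeping track at each inductive step of which injection sits on which side, and using $\pi_i^2=\id$ to rewrite compositions so that the structure-isomorphism cross-terms vanish cleanly as designed.
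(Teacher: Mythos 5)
Your proposal is correct and follows essentially the same route as the paper's proof: verify the adjacent-transposition case by expanding $\tau$ and the tensor product via restrictions of a single injection, then telescope along a word in adjacent transpositions by choosing the intermediate injections so that the cross-terms $[\theta,\psi]$ in the composition formula vanish. The only differences are cosmetic — you parametrize the intermediate injections forwards (starting from a free $\theta_1$) rather than backwards from the final $\phi$ as the paper does, and you make the independence of the representative on $\phi$ an explicit preliminary observation, which the paper uses silently. (One terminological slip: $\overline{\psi|_{\{k,k+1\}}} = \psi|_{\{k,k+1\}}\circ((1\;2)\times\id)$ is \emph{pre}-composition with $(1\;2)\times\id$, not post-composition, though your displayed formula is correct.)
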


As sketched above, the motivation behind $\Sigma$ was to give a more rigid version of Schwede's construction \cite[Construction~5.6]{schwede-k-theory} associating to any fixed injection $\mu\colon\bm2\times\omega\rightarrowtail\omega$ and any parsummable category $\mathcal C$ a symmetric monoidal category $\mu^*\mathcal C$ with the same underlying category and with tensor product given by $\mu_*\colon\mathcal C^{\times2}\to\mathcal C$. We now want make the relationship between the two constructions precise, for which we first have to make the remaining structure of $\mu^*\mathcal C$ explicit:

\begin{constr}\label{constr:comparison-schwede}
The tensor unit $\textbf1$ of $\mu^*\mathcal C$ is the additive unit $0$ of $\mathcal C$.

For any $X\in\mathcal C$, the left unitality isomorphism $\bm1\otimes X=\mu_*(0,X)=\mu(2,\blank)_*(X)\to X$ is the structure isomorphism $[1,\mu(2,\blank)]_X$ of the $E\mathcal M$-action; analogously, the right unitality isomorphism is given by $[1,\mu(1,\blank)]_X$.

If $Y\in\mathcal C$ is another object, then the symmetry isomorphism $\tau_{X,Y}\colon X\otimes Y=\mu_*(X,Y)\to \mu_*(Y,X)=Y\otimes X$ is given by $[\mu\circ(\pi\times\id),\mu]_{(X,Y)}$, where $\pi\in\Sigma_2$ is the unique non-trivial permutation.

Finally, we define for any $X,Y,Z\in\mathcal C$ the associativity isomorphism $(X\otimes Y)\otimes Z=\mu_*(\mu_*(X,Y),Z)\to\mu_*(X,\mu_*(Y,Z))=X\otimes(Y\otimes Z)$ as $[\mu\circ(1 \amalg\mu),\mu\circ(\mu\amalg 1)]_{(X,Y,Z)}$; here we write (by slight abuse of notation) $1\amalg\mu\colon\bm3\times\omega\to\bm2\times\omega$ for the injection
\begin{equation*}
\bm3\times\omega\cong\omega\amalg(\bm2\times\omega)\xrightarrow{1\amalg\mu}\omega\amalg\omega\cong\bm2\times\omega
\end{equation*}
obtained by conjugating the usual coproduct in $\cat{Set}$ with the canonical isomorphisms, i.e.~$(1\amalg\mu)(1,x)=(1,x)$ and $(1\amalg\mu)(i,x)=(2,\mu(i-1,x))$ for $i\ge2$; the injection $\mu\amalg1$ is defined analogously.
\end{constr}

The above is indeed a symmetric monoidal structure by \cite[Proposition~5.7]{schwede-k-theory}, and if $F\colon\mathcal C\to\mathcal D$ is any morphism of parsummable categories, then it is even strict symmetric monoidal when viewed as a functor $\mu^*\mathcal C\to\mu^*\mathcal D$ by \cite[Remark~5.8]{schwede-k-theory}; in particular, we get a functor $\mu^*\colon\cat{ParSumCat}\to\cat{SymMonCat}$.

One can actually show that the identity on objects extends to a natural strict symmetric monoidal isomorphism $J(\mu^*\mathcal C)\to\Sigma(\mathcal C)$ for any parsummable category $\mathcal C$. Here we will instead only prove the following slightly weaker statement that avoids talking about the concrete construction of $J$:

\begin{prop}\label{prop:comparison-schwede}
There exists a preferred and natural strong symmetric monoidal equivalence $\mu^*\mathcal C\to\Sigma(\mathcal C)$ for any parsummable category $\mathcal C$.
\begin{proof}
We define the functor $\widetilde\nu\colon\mu^*\mathcal C\to\Sigma(\mathcal C)$ as follows: an object $X\in\mathcal C$ is sent to the $1$-tuple $(X)$, and a morphism $f\colon X\to Y$ is sent to $[1,f,1]\colon(X)\to(Y)$. The unit isomorphism $\iota\colon\epsilon\to(0)$ is given by $[1,\id_0,\varnothing]$, while the structure isomorphism $\nabla_{X,Y}$ is given by
\begin{equation*}
\widetilde\nu(X)\otimes \widetilde\nu(Y)=(X,Y)\xrightarrow{[1,\id,\mu]} (\mu_*(X,Y))=\widetilde\nu(X\otimes Y)
\end{equation*}
for all $X,Y\in\mathcal C$. Note that these are indeed isomorphisms by Remark~\ref{rk:middle-term-identity}.

Let us first show that $\widetilde\nu$ is indeed a strong symmetric monoidal functor. The naturality of $\nabla$ amounts to the relation $\nabla_{Y,Y'}\circ(\widetilde\nu(f)\otimes\widetilde\nu(g))=\widetilde\nu(f\otimes g)\circ\nabla_{X,X'}$ for all $f\colon X\to Y,g\colon X'\to Y'$ in $\mathcal C$. To prove this, we first observe that if $[\psi,h,\phi]\colon A_\bullet\to B_\bullet$ is any morphism in $\Sigma(\mathcal C)$ and $u\in\mathcal M$, then
\begin{align*}
[u\circ\psi,u_*(h),u\circ\phi]&=[\psi,[\psi,u\circ\psi]_{B_\bullet}\circ u_*(h)\circ [u\circ\phi,\phi]_{A_\bullet},\phi]\\
&=[\psi,[1,u]_{\psi_*(B_\bullet)}\circ u_*(h)\circ [u,1]_{\phi_*(A_\bullet)},\phi]
=[\psi,h,\phi]
\end{align*}
by Lemma~\ref{lemma:generalized-structure-maps}-$(\ref{item:gsm-EM-equivariant})$ and naturality of the structure isomorphisms of the $E\mathcal M$-action. In particular, $\widetilde\nu(f)=[1,f,1]=[\mu(1,\blank),\mu(1,\blank)_*(f),\mu(1,\blank)]$ and analogously $\widetilde\nu(g)=[\mu(2,\blank),\mu(2,\blank)_*(g),\mu(2,\blank)]$, so that $\widetilde\nu(f)\otimes\widetilde\nu(g)=[\mu,\mu(1,\blank)_*(f)+\mu(2,\blank)_*(g),\mu]=[\mu,\mu_*(f,g),\mu]$, and hence indeed
\begin{align*}
\nabla_{Y,Y'}\circ(\widetilde\nu(f)\otimes\widetilde\nu(g))&=[1,\id,\mu][\mu,\mu_*(f,g),\mu]=[1,\mu_*(f,g),\mu]\\
&=[1,\mu_*(f,g),1][1,\id,\mu]=\widetilde\nu(f\otimes g)\circ\nabla_{X,X'}.
\end{align*}

As $\Sigma(\mathcal C)$ is permutative, the compatibility of $\widetilde\nu$ with the associativity isomorphisms amounts to saying that
\begin{equation}\label{diag:compatibility-assoc}
\begin{tikzcd}[column sep=1.75in]
(X,Y,Z)\arrow[r, equal]\arrow[d, "\nabla_{X,Y}\otimes\id_Z"'] & (X,Y,Z)\arrow[d, "\id_X\otimes\nabla_{Y,Z}"]\\
(\mu_*(X,Y),Z)\arrow[d, "\nabla_{\mu_*(X,Y),Z}"'] & (X,\mu_*(Y,Z))\arrow[d, "\nabla_{X,\mu_*(Y,Z)}"]\\
\big(\mu_*(\mu_*(X,Y),Z)\big)\arrow[r, "{[1,[\mu\circ(1\amalg\mu),\mu\circ(\mu\amalg1)],1]}"'] & \big(\mu_*(X,\mu_*(Y,Z))\big)
\end{tikzcd}
\end{equation}
commutes for all $X,Y,Z\in\mathcal C$. To this end we note that by the above observation $\nabla_{X,Y}=[u,\id,u\circ\mu]$ for all $u\in\mathcal M$; taking $u=\mu(1,\blank)$, we then see that the top left vertical arrow in $(\ref{diag:compatibility-assoc})$ is given by
\begin{align*}
[\mu(1,\blank),\id,\mu(1,\blank)\circ\mu]\otimes[\mu(2,\blank),\id,\mu(2,\blank)]&=[\mu,\id+\id,\mu(1,\blank)\circ\mu+\mu(2,\blank)]\\
&=[\mu,\id,\mu\circ(\mu\amalg1)],
\end{align*}
so that left hand vertical composite equals $[1,\id,\mu\circ(\mu\amalg1)]$. Analogously, one shows that the right hand vertical composite equals $[1,\id,\mu\circ(1\amalg\mu)]$. The claim therefore amounts to the equality
\begin{equation*}
[1,[\mu\circ(1\amalg\mu),\mu\circ(\mu\amalg1)],\mu\circ(\mu\amalg1)]=[1,\id,\mu\circ(1\amalg\mu)]
\end{equation*}
which is in turn immediate from the construction of the hom-sets of $\Sigma(\mathcal C)$.

For the compatibility with the right unitality isomorphisms we have to show that the composite
\begin{equation*}
(X)=(X)\otimes\epsilon\xrightarrow{\id_{(X)}\otimes\iota}(X,0)\xrightarrow{\nabla_{X,0}}(\mu_*(X,0))\xrightarrow{[1,[1,\mu(1,\blank)],1]}(X)
\end{equation*}
is the identity for all $X\in\mathcal C$. But $\id_{(X)}=[\mu(1,\blank),\id,\mu(1,\blank)]$ whereas $\epsilon=[1,\id,\varnothing]=[\mu(2,\blank),\id,\varnothing]$, so that the above composite is given by
\begin{equation*}
[1,[1,\mu(1,\blank)],1][1,\id,\mu][\mu,\id,\mu(1,\blank)]=[1,[1,\mu(1,\blank)],\mu(1,\blank)]=[1,\id,1]
\end{equation*}
which is indeed the identity as desired. The proof of left unitality is analogous (and in fact it also follows automatically from right unitality together with symmetry).

Finally, the compatibility with symmetry isomorphisms amounts to the relation $\widetilde\nu(\tau_{X,Y})\nabla_{X,Y}=\nabla_{Y,X}\tau_{(X),(Y)}$ for all $X,Y\in\mathcal C$. By Lemma~\ref{lemma:sigma-coherence}, the symmetry isomorphism in $\Sigma(\mathcal C)$ is given by $[\mu,\id,\mu\circ(\pi\times\id)]$, so the right hand side equals $[1,\id,\mu\circ(\pi\times\id)]$. On the other hand, the left hand side is given by $[1,[\mu\circ(\pi\times\id),\mu],1][1,\id,\mu]=[1,[\mu\circ(\pi\times\id),\mu],\mu]$ and these two again agree by definition of $\Sigma(\mathcal C)$. This completes the proof that $\widetilde\nu$ is strong symmetric monoidal.

It is trivial to check that $\widetilde\nu$ is natural. To finish the proof it therefore only remains to show that $\widetilde\nu\colon\mu^*\mathcal C\to\Sigma(\mathcal C)$ is an equivalence of categories. Indeed, Lemma~\ref{lemma:free-to-choose-phi-psi} immediately implies that $\widetilde\nu$ is fully faithful. For essential surjectivity we let $(X_1,\dots,X_m)\in\Sigma(\mathcal C)$ be arbitrary, and we pick an injection $\phi\colon\bm m\times\omega\rightarrowtail\omega$. Then $[\phi,\id_{\phi_*(X_\bullet)},1]\colon\widetilde\nu(\phi_*(X_\bullet))\to X_\bullet$ is an isomorphism by Remark~\ref{rk:middle-term-identity}, which completes the proof of the proposition.
\end{proof}
\end{prop}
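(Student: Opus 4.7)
The plan is to define an explicit functor $\widetilde\nu\colon\mu^*\mathcal C\to\Sigma(\mathcal C)$ that is the identity on objects in the sense that it sends $X\in\mathcal C$ to the $1$-tuple $(X)$ and a morphism $f\colon X\to Y$ to $[1,f,1]\colon(X)\to(Y)$, where $1\in\mathcal M$ denotes the identity (viewed as an injection $\bm1\times\omega\cong\omega\rightarrowtail\omega$). Functoriality on morphisms is immediate from the composition rule in $\Sigma(\mathcal C)$. Naturality in $\mathcal C$ is trivial from the formulas. So the content of the proposition lies in equipping $\widetilde\nu$ with a strong symmetric monoidal structure and then checking it is an equivalence.

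For the monoidal data, I would declare the unit isomorphism $\iota\colon\epsilon\to(0)$ to be $[1,\id_0,\varnothing]$ and the binary structure isomorphism $\nabla_{X,Y}\colon(X,Y)\to(\mu_*(X,Y))$ to be $[1,\id,\mu]$; that these are indeed isomorphisms is Remark~\ref{rk:middle-term-identity}. A useful preliminary observation is that for any $u\in\mathcal M$ and any representative $(\psi,h,\phi)$ one has $[u\psi,u_*h,u\phi]=[\psi,h,\phi]$ by Lemma~\ref{lemma:generalized-structure-maps}-$(\ref{item:gsm-EM-equivariant})$ and naturality; this lets me freely rewrite $\widetilde\nu(f)$ as $[\mu(1,\blank),\mu(1,\blank)_*f,\mu(1,\blank)]$ and similarly for $\widetilde\nu(g)$, so that $\widetilde\nu(f)\otimes\widetilde\nu(g)=[\mu,\mu_*(f,g),\mu]$. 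Plugging this into the two sides of the naturality square for $\nabla$ and composing, both paths simplify to $[1,\mu_*(f,g),\mu]$.

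The three coherence diagrams all reduce to equalities of classes in some $\Sigma_{\mathcal C}(\blank,\blank)$, and I would tackle them uniformly by first simplifying the ``$\nabla$-side'' of each diagram. For example, for associativity: after combining $\nabla$'s with the rewriting trick above, the left vertical composite in diagram~(\ref{diag:compatibility-assoc}) becomes $[1,\id,\mu\circ(\mu\amalg1)]$ and the right one $[1,\id,\mu\circ(1\amalg\mu)]$, so commutativity collapses to the equivalence-relation identity $[1,[\mu\circ(1\amalg\mu),\mu\circ(\mu\amalg1)],\mu\circ(\mu\amalg1)]=[1,\id,\mu\circ(1\amalg\mu)]$, which holds by definition. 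Unitality reduces the same way using Construction~\ref{constr:comparison-schwede}'s description of the unitors as $[1,\mu(i,\blank)]$; and symmetry reduces via Lemma~\ref{lemma:sigma-coherence}, which expresses the swap in $\Sigma(\mathcal C)$ as $[\mu,\id,\mu\circ(\pi\times\id)]$, matching the definition of $\tau$ in $\mu^*\mathcal C$ up to the equivalence relation.

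Finally, to show $\widetilde\nu$ is an equivalence: full faithfulness is immediate from Lemma~\ref{lemma:free-to-choose-phi-psi} applied with both source and target $1$-tuples, since every morphism $(X)\to(Y)$ in $\Sigma(\mathcal C)$ has a unique representative of the form $[1,f,1]$. For essential surjectivity, given $(X_1,\dots,X_m)$ choose any injection $\phi\colon\bm m\times\omega\rightarrowtail\omega$; then $[\phi,\id,1]\colon(\phi_*(X_\bullet))=\widetilde\nu(\phi_*(X_\bullet))\to(X_1,\dots,X_m)$ is an isomorphism by Remark~\ref{rk:middle-term-identity}. The main obstacle is purely organisational: keeping track of the various injections $1,\mu,\phi$ decorating each morphism and correctly applying the rewriting identity $[u\psi,u_*h,u\phi]=[\psi,h,\phi]$; once that identity is in hand every coherence check reduces to a one-line manipulation in the equivalence relation defining $\Sigma_{\mathcal C}$.
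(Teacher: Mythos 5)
Your proposal is correct and follows essentially the same route as the paper: same definition of $\widetilde\nu$, same monoidal structure data $\iota$ and $\nabla$, same rewriting identity $[u\psi,u_*h,u\phi]=[\psi,h,\phi]$ used to collapse the coherence diagrams, and the same full faithfulness and essential surjectivity arguments. No substantive differences.
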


As the final result of this section, we will construct a natural levelwise underlying equivalence $\Sigma\Phi\Rightarrow\id$. Let us first describe the basic idea of this construction: an object of $\Sigma\Phi(\mathscr C)$ is by definition a tuple $(X^{(1)},\dots,X^{(m)})$ of objects of $\Phi(\mathscr C)$, i.e.~sequences of objects of $\mathscr C$ such that almost all entries are given by the tensor unit (here and in what follows it will be convenient to denote elements of $\Phi(\mathscr C)$ simply by `$X$' instead of the notation `$X_\bullet$' used before as this avoids confusing expressions like `$\phi_*(X^\bullet_\bullet)$,' and we will do so for the remainder of this section).

Sending such an object $(X^{(1)},\dots,X^{(m)})$ to $\bigotimes_{i=1}^m\bigotimes_{j\in\omega} X^{(i)}_j\in\mathscr C$ is strictly compatible with the tensor products on $\Sigma\Phi\mathscr C$ and $\mathscr C$. If $[\psi,f,\phi]\colon (X^{(1)},\dots,X^{(m)})\to (Y^{(1)},\dots,Y^{(n)})$ is a morphism in $\Sigma\Phi\mathscr C$, then the morphism $f\colon\phi_*(X^\bullet)\to\psi_*(Y^\bullet)$ of $\Phi\mathscr C$ is given by a morphism $\bigotimes_{i\in\omega}\phi_*(X^\bullet)_i\to\bigotimes_{i\in\omega}\psi_*(Y^\bullet)_i$ in $\mathscr C$. If $\phi,\psi$ were order preserving with respect to the lexicographical order on $\bm m\times\omega$ and $\bm n\times\omega$, respectively, then $f$ would indeed be a morphism $\bigotimes_{i=1}^m\bigotimes_{j\in\omega} X^{(i)}_j\to\bigotimes_{i=1}^n\bigotimes_{j\in\omega}Y^{(i)}_j$, and we could extend the above by $[\psi,f,\phi]\mapsto f$ to get a functor, but there are of course no such injections $\phi,\psi$. However, using that the tensor product in $\mathscr C$ is strictly unital, a weaker condition turns out to be enough:

\begin{defi}
Let $\mathscr C$ be a permutative category and let $X^{(1)},\dots,X^{(m)}\in\Phi\mathscr C$. Then an injection $\phi\colon\bm{m}\times\omega\rightarrowtail\omega$ is called \emph{$X^\bullet$-monotone}, if $\phi(i,j)<\phi(i',j')$ for all $i,i'\in\bm{m}$, $j,j'\in\omega$ such that $(i,j)$ is lexicographically smaller than $(i',j')$ and such that $X^{(i)}_{j},X^{(i')}_{j'}\not=\bm{1}$.
\end{defi}

\begin{prop}\label{prop:suitably-monotone}
Let $X^{(1)},\dots,X^{(m)}$ be as above.
\begin{enumerate}
\item There exists an $X^\bullet$-monotone injection $\phi\colon\bm{m}\times\omega\rightarrowtail\omega$.\label{item:sm-existence}
\item If $\phi,\phi'\colon\bm{m}\times\omega\rightarrowtail\omega$ are two $X^\bullet$-monotone injections, then the morphism $[\phi',\phi]_{X^\bullet}:\phi_*(X^\bullet)\to\phi'_*(X^\bullet)$ in $\Phi(\mathscr C)$ is given by the identity of
\begin{equation*}
\bigotimes_{i\in\omega}(\phi_*(X^\bullet))_i=\bigotimes_{i\in\omega}(\phi'_*(X^\bullet))_i
\end{equation*}
in $\mathscr C$.\label{item:sm-uniqueness}
\end{enumerate}
\end{prop}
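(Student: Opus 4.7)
My plan is to prove the two parts in sequence.

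For Part~(\ref{item:sm-existence}), I would construct an $X^\bullet$-monotone injection explicitly. Since each $X^{(i)}\in\Phi\mathscr C$ has almost all entries equal to $\bm{1}$, the set $S\mathrel{:=}\{(i,j):X^{(i)}_j\neq\bm{1}\}\subseteq\bm{m}\times\omega$ is finite, so I can pick $N$ with $S\subseteq\bm{m}\times\{1,\dots,N\}$. Setting $\phi(i,j)\mathrel{:=}(i-1)N+j$ for $j\leq N$ and extending to an arbitrary injection sending the remaining pairs into $\{mN+1,mN+2,\dots\}$ yields the desired $\phi$: monotonicity on $S$ follows by a direct case split according to whether the lex-comparison uses the first or second coordinate.

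For Part~(\ref{item:sm-uniqueness}), my plan is to first verify that the two tensor products agree as objects of $\mathscr C$, and then show that $[\phi',\phi]_{X^\bullet}$ is the identity of this common object. Unfolding the generalized action, $(\phi_*X^\bullet)_k$ equals $X^{(i)}_j$ for the unique pair with $\phi(i,j)=k$ (when one exists) and $\bm{1}$ otherwise, so strict unitality in $\mathscr C$ lets $\bigotimes_{k\in\omega}(\phi_*X^\bullet)_k$ collapse to the tensor product of non-unit entries in the order induced by $\phi$ on $S$. The $X^\bullet$-monotonicity hypothesis ensures this matches the lex order on $S$, and applying the same analysis to $\phi'$ identifies both tensor products with $\bigotimes_{(i,j)\in S,\,\textup{lex}}X^{(i)}_j$.

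For the morphism, I would apply the decomposition formula from Lemma~\ref{lemma:generalized-structure-maps}, giving $[\phi',\phi]_{X^\bullet}=\sum_{i=1}^m[\phi'(i,\blank),\phi(i,\blank)]_{X^{(i)}}$. The sum in $\Phi\mathscr C$ is itself defined by conjugating a coordinatewise tensor with the shuffling coherence isomorphisms of~(\ref{eq:Phi-sum-of-morphisms-coherence}); under $X^\bullet$-monotonicity of $\phi$ these shuffling isomorphisms are identities, because the internal order on $\bigcup_i\phi(\{i\}\times\supp X^{(i)})\subseteq\omega$ coincides with the lex order on the corresponding disjoint union of supports, and likewise for $\phi'$. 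This reduces the problem to showing that each summand, which by Lemma~\ref{lemma:EM-action-concise} equals $(\phi'(i,\blank))^{X^{(i)}}_\circ\circ((\phi(i,\blank))^{X^{(i)}}_\circ)^{-1}$, is the identity morphism in $\mathscr C$.

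The main obstacle, which carries all the real content, is the following sub-lemma: for $X\in\Phi\mathscr C$ and $u\in\mathcal M$ with $u|_{\supp X}$ order-preserving, the structure morphism $u^X_\circ\colon X\to u_*X$ is the identity of the naturally equal tensor product in $\mathscr C$. By the construction of $\Phi$, $u^X_\circ$ is realised as a coherence isomorphism in $\mathscr C$ for some permutation $\sigma$ required to agree with $u$ on $\supp X$ (after taking $K$ large enough that $\supp X$ and $u(\supp X)$ both lie in $\{1,\dots,K\}$); the order-preservation hypothesis allows me to select $\sigma$ with $\sigma|_{\supp X}$ order-preserving. The sub-lemma then follows from the standard fact that a coherence isomorphism associated to a permutation preserving the relative order of all non-unit tensorands is the identity: one decomposes such $\sigma$ into adjacent transpositions none of which exchange two non-unit positions (possible precisely by our assumption), whereupon each transposition contributes an instance of $\tau_{\bm{1},Y}=\id=\tau_{Y,\bm{1}}$ and thus an identity morphism in $\mathscr C$.
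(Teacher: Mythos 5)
Your proposal is correct in outline but routes through a more elaborate argument than the paper's. For part (\ref{item:sm-existence}), the formula $\phi(i,j)=(i-1)N+j$ is a perfectly good alternative to the paper's construction, which instead sends the lexicographically ordered list of non-unit indices to $\{1,\dots,\ell\}$ and extends arbitrarily. The substantial difference is in part (\ref{item:sm-uniqueness}). You decompose $[\phi',\phi]_{X^\bullet}$ as an $m$-fold sum via Lemma~\ref{lemma:generalized-structure-maps}, argue that the shuffling coherence isomorphisms in the definition of $+$ on $\Phi\mathscr C$ are identities because of the block structure that $X^\bullet$-monotonicity imposes on the supports, and then show each summand is an identity separately. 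The paper instead picks a single $u\in\mathcal M$ with $u\phi$ agreeing with $\phi'$ on the relevant finite set, so that $[\phi',\phi]_{X^\bullet}=[u\phi,\phi]_{X^\bullet}=[u,1]_{\phi_*X^\bullet}$ by Lemma~\ref{lemma:generalized-structure-maps}-(\ref{item:gsm-EM-equivariant}), and then applies one invocation of the coherence corollary; this is shorter and entirely avoids the iterated sum analysis.

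Both routes ultimately rest on the same sub-lemma, namely Corollary~\ref{cor:comparison-iso-Phi-reexpressed} in its special case: if $u$ is monotone on $\supp X$ then $u^X_\circ$ is an identity in $\mathscr C$. The paper derives this from Lemma~\ref{lemma:shuffle-coherence}, which it proves by chasing an element through the Barratt--Eccles operad structure maps. You instead propose to write the associated $\sigma\in\Sigma_K$ as a word in adjacent transpositions, none of which swaps two non-unit tensorands, each contributing $\tau_{\bm1,Y}=\id$. The statement is true, but as written the existence of such a factorization (``possible precisely by our assumption'') is asserted rather than argued. Making it precise requires a ``sliding blanks'' argument where one must be careful that the unit/non-unit status of a position changes at each step; it is not an immediate consequence of $\sigma$ preserving the relative order of the non-unit positions, and this is where the real work is hiding. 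The paper's operadic proof of Lemma~\ref{lemma:shuffle-coherence} deals with all shuffle permutations at once, sidestepping the combinatorics entirely and moreover giving the stronger statement needed elsewhere (including the case $\widetilde\sigma\neq\id$).
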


The proof of the proposition relies on the following coherence result:

\begin{lemma}\label{lemma:shuffle-coherence}
Let $k\ge0$, $\sigma\in\Sigma_k$ and assume we are given $1\le i_1<\cdots<i_m\le k$. Then there exists a unique permutation $\widetilde\sigma\in\Sigma_m$ such that
\begin{equation*}
\sigma(i_{\widetilde\sigma^{-1}(1)})<\cdots<\sigma(i_{\widetilde\sigma^{-1}(m)}).
\end{equation*}
Moreover, if $\mathscr C$ is a permutative category and $X_1,\dots,X_k\in\mathscr C$ such that $X_i=\bm1$ for all $i\notin\{i_1,\dots,i_m\}$, then the coherence isomorphism
\begin{equation}\label{eq:big-coherence}
\bigotimes_{i=1}^k X_i\to\bigotimes_{i=1}^k X_{\sigma^{-1}(i)}
\end{equation}
associated to $\sigma$ agrees with the coherence isomorphism
\begin{equation*}
\bigotimes_{j=1}^m X_{i_j}\to \bigotimes_{j=1}^m X_{i_{\widetilde\sigma^{-1}(j)}}
\end{equation*}
associated to $\widetilde\sigma$.
\begin{proof}
It is obvious that there is at most one such $\widetilde\sigma$. Now we define $m_1,\dots,m_k$ via
\begin{equation*}
m_i=\begin{cases}
1 & i \in \{i_1,\dots,i_m\}\\
0 & \textup{otherwise}
\end{cases}
\end{equation*}
(so that in particular $m=m_1+\cdots+m_k$), and we claim that $\hat\sigma\mathrel{:=}\sigma_{(m_1,\dots,m_k)}$, i.e.~the image of $(\sigma,\id_{m_1},\dots,\id_{m_k})$ under the structure map $E\Sigma_k\times E\Sigma_{m_1}\times\cdots\times E\Sigma_{m_k}\to E\Sigma_m$ of the Barratt-Eccles operad, has the desired properties. Indeed, by definition
\begin{equation*}
\hat\sigma(j)=\hat\sigma(m_1+\cdots+m_{\ell-1}+1)=m_{\sigma^{-1}(1)}+\cdots+m_{\sigma^{-1}(\sigma(\ell)-1)}+1
\end{equation*}
for any $1\le j\le m$, where $1\le\ell\le k$ is the unique index with $m_\ell=1$ and $m_1+\cdots+m_{\ell-1}+1=j$. Clearly $\ell=i_j$, so if $1\le j'\le m$ with $\sigma(i_{j'})\ge\sigma(i_j$), then
\begin{equation*}
\hat\sigma(j')-\hat\sigma(j)=m_{\sigma^{-1}(\sigma(i_j))}+m_{\sigma^{-1}(\sigma(i_j)+1)}+\cdots+m_{\sigma^{-1}(\sigma(i_{j'})-1)}\ge0,
\end{equation*}
hence $\hat\sigma(j')\ge\hat\sigma(j)$. By contraposition we conclude that $\sigma(i_{j'})<\sigma(i_{j})$ whenever $\hat\sigma(j')<\hat\sigma(j)$. The claim follows as evidently $\hat\sigma(\hat\sigma^{-1}(1))<\cdots<\hat\sigma(\hat\sigma^{-1}(m))$.

It only remains to verify the claim about the coherence isomorphism, for which it suffices (by the above identification of $\widetilde\sigma$ with $\sigma_{(m_1,\dots,m_k)}$) to chase the morphism $\big((\sigma,\id_k),\big((\id_{{m_1}},\id_{{m_1}}),\dots,(\id_{{m_k}},\id_{{m_k}})\big),(\id_{X_1},\dots,\id_{X_k})\big)$ through the commutative diagram
\begin{equation*}
\begin{tikzcd}[row sep=small]
E\Sigma_k\times\big(\prod_{i=1}^kE\Sigma_{m_i}\big)\times\mathscr C^{\times m}\arrow[dd, "\text{shuffle}"']\arrow[rr, "\text{compose}\times\id"] && E\Sigma_m\times\mathscr C^{\times m}\arrow[rd, "\text{act}", bend left=10pt]\\
&&& \mathscr C\\
E\Sigma_k\times\prod_{i=1}^k(E\Sigma_{m_i}\times\mathscr C^{\times m_i})\arrow[rr, "E\Sigma_k\times\prod\text{act}"'] && E\Sigma_k\times\mathscr C^{\times k}\arrow[ur, "\text{act}"', bend right=10pt]
\end{tikzcd}
\end{equation*}
coming from the usual $E\Sigma_*$-algebra structure on $\mathscr C$.
\end{proof}
\end{lemma}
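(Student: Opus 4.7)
My plan has three parts.

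First, I will dispose of the existence and uniqueness of $\widetilde\sigma$ by a direct sorting argument: since $\sigma$ is a bijection, the values $\sigma(i_1),\dots,\sigma(i_m)$ are $m$ pairwise distinct positive integers, and so they admit a unique increasing enumeration. Reading off the permutation that performs this sorting gives a unique $\widetilde\sigma\in\Sigma_m$ with $\sigma(i_{\widetilde\sigma^{-1}(1)})<\cdots<\sigma(i_{\widetilde\sigma^{-1}(m)})$.

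Second, I will give an explicit description of $\widetilde\sigma$ in terms of the operadic structure of the categorical Barratt--Eccles operad. Define $m_i\in\{0,1\}$ for $1\le i\le k$ by setting $m_i=1$ if and only if $i\in\{i_1,\dots,i_m\}$, so that $m_1+\cdots+m_k=m$. Unraveling the formula $\sigma_{(n_1,\dots,n_k)}(n_1+\cdots+n_{\ell-1}+j)=n_{\sigma^{-1}(1)}+\cdots+n_{\sigma^{-1}(\sigma(\ell)-1)}+j$ recalled in the paper in the degenerate case where each $n_i\in\{0,1\}$, a short calculation shows $\widetilde\sigma=\sigma_{(m_1,\dots,m_k)}$, i.e.\ $\widetilde\sigma$ is precisely the image of $(\sigma,\id_{m_1},\dots,\id_{m_k})$ under the operadic composition $E\Sigma_k\times E\Sigma_{m_1}\times\cdots\times E\Sigma_{m_k}\to E\Sigma_m$.

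Third, I will deduce the coherence statement by chasing a single element through the associativity diagram for the $E\Sigma_*$-algebra structure on $\mathscr C$ supplied by Theorem~\ref{thm:permutative-operadic-description}. Namely, I consider the element
\begin{equation*}
\bigl((\sigma,\id_k),\bigl((\id_{m_1},\id_{m_1}),\dots,(\id_{m_k},\id_{m_k})\bigr),(\id_{X_{i_1}},\dots,\id_{X_{i_m}})\bigr)
\end{equation*}
of $E\Sigma_k\times\bigl(\prod_{i=1}^k E\Sigma_{m_i}\bigr)\times\mathscr C^{\times m}$ and trace it through the two paths in the commutative square that compares the operadic composition with the algebra action. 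Composing first and then acting produces the action of $\sigma_{(m_1,\dots,m_k)}=\widetilde\sigma$ on $(X_{i_1},\dots,X_{i_m})$, which is by definition the coherence isomorphism associated to $\widetilde\sigma$. Acting on each block first and then globally produces the action of $\sigma$ on the $k$-tuple whose $i$-th entry is $\alpha_{m_i}(\id_{m_i},\blank)$ applied to the corresponding (possibly empty) sub-tuple; by Theorem~\ref{thm:permutative-operadic-description}(1) this entry is $\bm 1$ when $m_i=0$ and $X_{i}$ when $m_i=1$, which by the assumption $X_i=\bm 1$ for $i\notin\{i_1,\dots,i_m\}$ coincides with $(X_1,\dots,X_k)$. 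Hence the second route computes the coherence isomorphism associated to $\sigma$, and commutativity of the square forces the two coherence isomorphisms to agree.

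The main obstacle is the second step: proving $\widetilde\sigma=\sigma_{(m_1,\dots,m_k)}$ directly from the block-shuffle formula in the special case where the block sizes are all $0$ or $1$. This amounts to checking that the map sending $j\in\{1,\dots,m\}$ to $\sigma_{(m_1,\dots,m_k)}(j)$ is order-preserving precisely when restricted to indices $j$ with $\sigma(i_j)$ in increasing order, which requires some careful bookkeeping with the sums $m_{\sigma^{-1}(1)}+\cdots+m_{\sigma^{-1}(\sigma(\ell)-1)}$. Once this identification is in place, the operadic diagram does all the remaining work and no further coherence reasoning is needed.
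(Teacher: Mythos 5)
Your proposal is correct and follows essentially the same route as the paper: identify $\widetilde\sigma$ with the operadic composite $\sigma_{(m_1,\dots,m_k)}$ via the block-shuffle formula, then deduce the coherence statement by chasing a single element through the diagram expressing compatibility of the $E\Sigma_*$-algebra structure on $\mathscr C$ with operadic composition. The only (harmless) deviations are that you establish existence of $\widetilde\sigma$ by a direct sorting argument before the operadic identification, and you are slightly more careful than the paper in placing the $m$-tuple $(\id_{X_{i_1}},\dots,\id_{X_{i_m}})$ in the corner $\mathscr C^{\times m}$ and invoking the $0$-ary action to recover the unit entries.
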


\begin{cor}\label{cor:comparison-iso-Phi-reexpressed}
Let $\mathscr C$ be a permutative category, let $X\in\Phi\mathscr C$, let $u\in\mathcal M$, and write $\supp X=\{i_1<\cdots<i_m\}$. Then there exists a unique $\widetilde\sigma\in\Sigma_m$ such that $u(i_{\widetilde\sigma^{-1}(1)})<\cdots<u(i_{\widetilde\sigma^{-1}(m)})$. Moreover, as a map in $\mathscr C$, $[u,1]_X$ is the coherence isomorphism associated to $\widetilde\sigma$.
\end{cor}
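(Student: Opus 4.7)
The plan is to apply the shuffle coherence result (Lemma~\ref{lemma:shuffle-coherence}) to the defining expression of $[u,1]_X$ in $\Phi\mathscr C$, after first disposing of the existence and uniqueness statement for $\widetilde\sigma$.

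For the first claim, since $u\in\mathcal M$ is injective the values $u(i_1),\dots,u(i_m)$ are pairwise distinct elements of $\omega$, so there is a unique order-preserving bijection between $\{u(i_1),\dots,u(i_m)\}$ (with its induced order from $\omega$) and $\{1,\dots,m\}$ (with its natural order). Pulling back the indexing through this bijection produces a unique $\widetilde\sigma\in\Sigma_m$ with $u(i_{\widetilde\sigma^{-1}(1)})<\dots<u(i_{\widetilde\sigma^{-1}(m)})$.

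For the second claim, I would unfold the construction of $\Phi\mathscr C$. Pick $K\gg0$ so that $X_i=\bm1=u_*(X)_i$ for all $i>K$ (in particular $K\ge i_m$), and pick a permutation $\sigma\in\Sigma_K$ with $\sigma(i)=u(i)$ for every $i\in\supp(X)$. By definition $[u,1]_X$ is the coherence isomorphism $\bigotimes_{i=1}^K X_i\to\bigotimes_{i=1}^K X_{\sigma^{-1}(i)}$ associated to $\sigma$. Now apply Lemma~\ref{lemma:shuffle-coherence} with $k=K$, the same $\sigma$, and the indices $i_1<\dots<i_m=\supp(X)$; the hypothesis $X_i=\bm1$ for $i\notin\{i_1,\dots,i_m\}$ holds by definition of the support. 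The lemma yields a unique $\widehat\sigma\in\Sigma_m$ with $\sigma(i_{\widehat\sigma^{-1}(1)})<\dots<\sigma(i_{\widehat\sigma^{-1}(m)})$ and identifies the big coherence isomorphism with the coherence isomorphism associated to $\widehat\sigma$ on $\bigotimes_{j=1}^m X_{i_j}$.

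To conclude, one only has to match $\widehat\sigma$ with $\widetilde\sigma$: since $\sigma$ agrees with $u$ on $\supp(X)$, the chain of inequalities defining $\widehat\sigma$ is the same as the one defining $\widetilde\sigma$, so $\widehat\sigma=\widetilde\sigma$ by the uniqueness already established. I do not expect any serious obstacle here; the only subtle point is keeping track of the fact that the $K$-fold tensor product $\bigotimes_{i=1}^K X_i$ equals $\bigotimes_{j=1}^m X_{i_j}$ on the nose in $\mathscr C$ by strict unitality (and similarly after reindexing by $\sigma^{-1}$, using that $u$ maps the complement of $\supp(X)$ into indices carrying units), which is exactly the statement Lemma~\ref{lemma:shuffle-coherence} was designed to handle.
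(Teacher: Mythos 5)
Your proposal is correct and follows essentially the same route as the paper: recall the explicit construction of $[u,1]_X$ in $\Phi\mathscr C$ via a choice of $K\gg 0$ and $\sigma\in\Sigma_K$ agreeing with $u$ on $\supp(X)$, then invoke Lemma~\ref{lemma:shuffle-coherence}. The only difference is that you spell out the existence/uniqueness of $\widetilde\sigma$ and the identification $\widehat\sigma=\widetilde\sigma$, which the paper leaves implicit.
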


By definition of the coherence isomorphisms associated to bijections between finite totally ordered sets (see the discussion after Remark~\ref{rk:rk-before-coherence-isomorphisms}), we can also reformulate this as saying that as a morphism in $\mathscr C$, $[u,1]_X$ is the coherence isomorphism $\bigotimes_{i\in\supp(X)} X_i\to\bigotimes_{i\in u(\supp(X))} (u_*X)_i$ associated to the bijection $u\colon \supp(X)\to u(\supp(X))$ of totally ordered sets.

\begin{proof}
We recall the construction of $[u,1]_X$: this was given by taking $K\gg0$ such that $X_i=\bm1=(u_*X)_i$ for all $i> K$ together with a permutation $\sigma\in\Sigma_K$ such that $\sigma(i)=u(i)$ for all $i$ with $X_i\not=\bm1$. With respect to these choices $[u,1]_X$ is the coherence isomorphism
\begin{equation*}
\bigotimes_{i=1}^K X_i\to \bigotimes_{i=1}^K X_{\sigma^{-1}(i)}=\bigotimes_{i=1}^K (u_*X)(i)
\end{equation*}
associated to $\sigma$. Therefore, the claim is an instance of the previous lemma.
\end{proof}

The special case $\widetilde\sigma=\id$ is worth making explicit:

\begin{cor}
Let $\mathscr C$ be a permutative category, let $X\in\Phi\mathscr C$,
and let $u\in\mathcal M$. Assume $u$ is monotone when restricted to $\supp X$. Then $[u,1]_X$ is the identity as a morphism $\bigotimes_{i\in\omega} X_i\to\bigotimes_{i\in\omega} (u_*X)_i$ in $\mathscr C$.\qed
\end{cor}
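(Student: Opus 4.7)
The plan is to derive this immediately from the preceding Corollary~\ref{cor:comparison-iso-Phi-reexpressed} by verifying that the hypothesis forces $\widetilde\sigma=\id$ and then unwinding what the coherence isomorphism associated to the identity permutation actually is.

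First I would write $\supp X=\{i_1<\cdots<i_m\}$ as in that corollary, so that $[u,1]_X$ is identified as the coherence isomorphism associated to the unique $\widetilde\sigma\in\Sigma_m$ satisfying $u(i_{\widetilde\sigma^{-1}(1)})<\cdots<u(i_{\widetilde\sigma^{-1}(m)})$. Since $u$ is assumed monotone on $\supp X$, the inequalities $i_1<\cdots<i_m$ give $u(i_1)<\cdots<u(i_m)$, so $\widetilde\sigma=\id$ works, and uniqueness then pins it down.

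Next I would observe that under the reformulation given immediately after Corollary~\ref{cor:comparison-iso-Phi-reexpressed}, $[u,1]_X$ is the coherence isomorphism $\bigotimes_{i\in\supp X}X_i\to\bigotimes_{i\in u(\supp X)}(u_*X)_i$ associated to the order-preserving bijection $u|_{\supp X}\colon\supp X\to u(\supp X)$. By the general definition of the coherence isomorphism associated to a bijection of finite totally ordered sets (see the discussion after Remark~\ref{rk:rk-before-coherence-isomorphisms}), this is the coherence isomorphism of the permutation $\vartheta\circ u|_{\supp X}\circ\pi$ where $\pi$ and $\vartheta$ are the order-preserving enumerations of $\supp X$ and $u(\supp X)$. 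Monotonicity makes this composite the identity permutation, and the coherence isomorphism associated to $\id\in\Sigma_m$ is the identity morphism by the first clause of Theorem~\ref{thm:permutative-operadic-description}. Finally, using strict unitality in $\mathscr C$, this identity morphism extends uniquely to the identity of $\bigotimes_{i\in\omega}X_i=\bigotimes_{i\in\omega}(u_*X)_i$, as required.

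There is no real obstacle here beyond bookkeeping: the whole content has been packaged into the previous corollary, and what remains is simply to note that an order-preserving bijection yields the identity permutation, hence the identity coherence isomorphism.
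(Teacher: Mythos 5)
Your proof is correct and follows exactly the route the paper intends: the statement is stated with a \verb|\qed| and prefaced by ``The special case $\widetilde\sigma=\id$ is worth making explicit,'' so the paper itself treats it as the case $\widetilde\sigma=\id$ of Corollary~\ref{cor:comparison-iso-Phi-reexpressed}, which is precisely what you verify. The extra unwinding through the discussion after Remark~\ref{rk:rk-before-coherence-isomorphisms} and the strict-unitality bookkeeping is accurate but simply spells out what the paper leaves implicit.
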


With this established we are now ready to tackle the proposition:

\begin{proof}[Proof of Proposition~\ref{prop:suitably-monotone}]
For the first statement, we let $(i_1,j_1), (i_2,j_2),\dots, (i_{\ell}, j_{\ell})$ be the sequence of those indices $(i,j)$ such that $X^{(i)}_{j}\not=\bm1$, ordered lexicographically. Then there exists an injection $\phi_0\colon (\bm{m}\times\omega)\setminus \{(i_1,j_1),\dots,(i_\ell,j_\ell)\}\rightarrowtail\omega\setminus\{1,\dots,\ell\}$ as both sides are countably infinite. We can then simply define
\begin{equation*}
\phi(i,j)=\begin{cases}
k & \text{if }(i,j)=(i_k,j_k)\\
\phi_0(i,j) & \text{otherwise}.
\end{cases}
\end{equation*}
For the second statement we choose any $u\in\mathcal M$ with $u(\phi(i_k,j_k))=\phi'(i_k,j_k)$ for all $1\le k\le\ell$. Then $u\phi$ and $\phi'$ agree on $(\{1\}\times\supp(X^{(1)}))\cup\cdots\cup(\{m\}\times\supp(X^{(m)}))=\{(i_1,j_1),\dots,(i_\ell,j_\ell)\}$, hence $[\phi',\phi]_{X^\bullet}=[u\phi,\phi]_{X^\bullet}$, which further agrees with $[u,1]_{\phi_*(X^\bullet)}$ by Lemma~\ref{lemma:generalized-structure-maps}-$(\ref{item:gsm-EM-equivariant})$. We observe that
$\phi_*(X^\bullet)$ is supported on $\{\phi(i_1,j_1),\dots,\phi(i_{\ell},j_{\ell})\}$ and that $\phi(i_1,j_1)<\cdots<\phi(i_{\ell},j_{\ell})$ by monotonicity of $\phi$. Moreover, $\phi'(i_1,j_1)<\cdots<\phi'(i_{\ell},j_{\ell})$ by monotonicity of $\phi'$, i.e.~$u$ is monotone when restricted to $\supp(\phi_*(X^\bullet))$. Thus, the claim follows from the previous corollary.
\end{proof}

Now we can finally turn the above intuition into a rigourous construction of the comparison map $\Sigma\Phi\mathscr C\to\mathscr C$:

\begin{constr}
Let $\mathscr C$ be a small permutative category. We define $T_{\mathscr C}\colon\Sigma\Phi\mathscr C\to\mathscr C$ as follows: an object $(X^{(1)},\dots,X^{(m)})\in\Sigma\Phi\mathscr C$ is sent to
\begin{equation*}
\bigotimes_{i=1}^m\bigotimes_{j\in\omega} X^{(i)}_j;
\end{equation*}
if $\alpha\colon (X^{(1)},\dots,X^{(m)})\to(Y^{(1)},\dots,Y^{(n)})$ is any morphism, then we choose a representative $(\psi,f,\phi)$ of $\alpha$ such that $\psi$ is $Y^\bullet$-monotone and $\phi$ is $X^\bullet$-monotone, and set $T_{\mathscr C}(\alpha)=f$ (where we view $f$ as a morphism in $\mathscr C$).
\end{constr}

\begin{prop}\label{prop:T-equivalence}
Let $\mathscr C$ be any small permutative category. Then $T_{\mathscr C}$ is a well-defined strict symmetric monoidal functor $\Sigma\Phi\mathscr C\to\mathscr C$. For varying $\mathscr C$, the $T$'s assemble into a natural levelwise underlying equivalence $\Sigma\Phi\Rightarrow\id_{\cat{PermCat}}$.
\end{prop}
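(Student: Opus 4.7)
The plan is to verify in turn that $T_{\mathscr C}$ is well-defined, functorial, strict symmetric monoidal, natural in $\mathscr C$, and an equivalence of the underlying categories. The main obstacle will be the compatibility with the symmetry isomorphism $\tau$, for the reason explained below.

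\emph{Well-definedness and functoriality.} By Lemma~\ref{lemma:free-to-choose-phi-psi} together with Proposition~\ref{prop:suitably-monotone}-$(\ref{item:sm-existence})$, every morphism $\alpha\colon X^\bullet\to Y^\bullet$ in $\Sigma\Phi\mathscr C$ admits a representative $(\psi,f,\phi)$ with $\phi$ being $X^\bullet$-monotone and $\psi$ being $Y^\bullet$-monotone. If $(\psi',f',\phi')$ is any other such representative, then $f'=[\psi',\psi]\circ f\circ[\phi,\phi']$ in $\Phi\mathscr C$; by Proposition~\ref{prop:suitably-monotone}-$(\ref{item:sm-uniqueness})$ both $[\psi',\psi]_{Y^\bullet}$ and $[\phi,\phi']_{X^\bullet}$ are identities when viewed as morphisms in $\mathscr C$, so $f=f'$ in $\mathscr C$, showing that $T_{\mathscr C}(\alpha)$ is well-defined. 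Preservation of identities is immediate from $[\phi,\phi]=\id$, and for composition one chooses representatives sharing a common $Y^\bullet$-monotone middle injection so that $T_{\mathscr C}([\rho,g,\psi]\circ[\psi,f,\phi])=T_{\mathscr C}[\rho,g\circ f,\phi]=g\circ f$ as desired.

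\emph{Strict tensor structure.} The equalities $T_{\mathscr C}(\epsilon)=\bm1$ and $T_{\mathscr C}(X^\bullet\otimes Y^\bullet)=T_{\mathscr C}(X^\bullet)\otimes T_{\mathscr C}(Y^\bullet)$ hold on objects by inspection, using strict unitality of $\mathscr C$ to collapse infinite tensor products. For tensor products of morphisms, first pick a monotone representative $(\psi,f,\phi)$ of $\alpha$ all of whose values on non-unit positions lie in $\{1,\dots,N\}$, and then a monotone representative $(\rho,g,\theta)$ of $\beta$ whose corresponding values lie in $\{N+1,N+2,\dots\}$; such choices exist by post-composing with suitable shifts in $\mathcal M$, which preserve monotonicity. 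Then $\phi+\theta$ is $(X^\bullet,X'^\bullet)$-monotone and $\psi+\rho$ is $(Y^\bullet,Y'^\bullet)$-monotone, and the coherence isomorphisms appearing in the definition of the sum $f+g$ in $\Phi\mathscr C$ then collapse to identities in $\mathscr C$ by Proposition~\ref{prop:suitably-monotone}-$(\ref{item:sm-uniqueness})$ applied to the concatenation, yielding $T_{\mathscr C}(\alpha\otimes\beta)=f\otimes g=T_{\mathscr C}(\alpha)\otimes T_{\mathscr C}(\beta)$.

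\emph{Compatibility with $\tau$ (main obstacle).} Unlike the previous steps, no single injection $\phi\colon\bm{(m+n)}\times\omega\rightarrowtail\omega$ makes both $\phi$ and $\bar\phi$ monotone for their respective concatenations. To circumvent this, choose an $(X^\bullet,Y^\bullet)$-monotone $\phi$ and independently a $(Y^\bullet,X^\bullet)$-monotone $\psi$; by the defining equivalence relation on $\Sigma_{\Phi\mathscr C}$ we have $\tau_{X^\bullet,Y^\bullet}=[\bar\phi,\id,\phi]=[\psi,[\psi,\bar\phi]_{(Y^\bullet,X^\bullet)},\phi]$, so $T_{\mathscr C}(\tau_{X^\bullet,Y^\bullet})=[\psi,\bar\phi]_{(Y^\bullet,X^\bullet)}$ viewed in $\mathscr C$. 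Decomposing this sumwise (as in the construction of the generalized structure maps) and applying Lemma~\ref{lemma:support-change} componentwise together with Lemma~\ref{lemma:generalized-structure-maps}-$(\ref{item:gsm-EM-equivariant})$, we rewrite it as $[u,1]_{\bar\phi_*(Y^\bullet,X^\bullet)}$ for any $u\in\mathcal M$ satisfying $u\bar\phi=\psi$ on the support of $(Y^\bullet,X^\bullet)$; Corollary~\ref{cor:comparison-iso-Phi-reexpressed} then identifies this with the coherence isomorphism in $\mathscr C$ associated to the order-preserving bijection $\bar\phi(\supp(Y^\bullet,X^\bullet))\to\psi(\supp(Y^\bullet,X^\bullet))$. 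By the monotonicity assumptions, this bijection is precisely the shuffle that swaps the block of $X^\bullet$-factors past the block of $Y^\bullet$-factors, and by the operadic description of the symmetry in Theorem~\ref{thm:permutative-operadic-description} the associated coherence isomorphism equals $\tau_{T_{\mathscr C}(X^\bullet),T_{\mathscr C}(Y^\bullet)}$.

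\emph{Naturality and equivalence.} Naturality is immediate: for a strict symmetric monoidal $F\colon\mathscr C\to\mathscr D$ and any monotone representative $(\psi,f,\phi)$, the triple $(\psi,\Phi(F)(f),\phi)$ is still monotone, and $\Phi(F)(f)=Ff$ in $\mathscr D$ by construction of $\Phi$, giving $T_{\mathscr D}\circ\Sigma\Phi(F)=F\circ T_{\mathscr C}$. Finally, $T_{\mathscr C}$ is an equivalence of underlying categories: essential surjectivity is witnessed by $T_{\mathscr C}((C,\bm1,\bm1,\dots))=C$ for each $C\in\mathscr C$, and full faithfulness follows from Lemma~\ref{lemma:free-to-choose-phi-psi} applied with monotone $\phi,\psi$, since the resulting bijection $\Hom_{\mathscr C}(T_{\mathscr C}(X^\bullet),T_{\mathscr C}(Y^\bullet))=\Hom_{\Phi\mathscr C}(\phi_*(X^\bullet),\psi_*(Y^\bullet))\to\Hom_{\Sigma\Phi\mathscr C}(X^\bullet,Y^\bullet)$ is by construction one-sided inverse (and hence inverse) to the hom-set map of $T_{\mathscr C}$.
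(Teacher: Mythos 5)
Your proposal follows essentially the same strategy as the paper's proof, but a few citations are doing more work than they can actually bear, and one step needs an intermediate lemma you haven't isolated.

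For the tensor product of morphisms, your device of spacing representatives of $\alpha$ into $\{1,\dots,N\}$ and of $\beta$ above $N$ works (indeed the paper achieves the same end by restricting a single $(A^\bullet\otimes C^\bullet)$-monotone injection), but the assertion that "the coherence isomorphisms appearing in the definition of the sum $f+g$ collapse to identities by Proposition~\ref{prop:suitably-monotone}-$(\ref{item:sm-uniqueness})$" is not quite right: those isomorphisms are the coherence isomorphisms in $\mathscr C$ associated to the tautological bijections $\supp\amalg\supp\to\supp\cup\supp$, not transformations of the form $[\phi',\phi]$. What actually closes the argument is that your spacing makes these tautological bijections order-preserving, so the associated coherence isomorphisms are identity arrows by the discussion after Remark~\ref{rk:rk-before-coherence-isomorphisms}; this is what the paper checks. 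Also be careful that spacing the non-unit values apart does not by itself make $\im\phi\cap\im\theta=\varnothing$, which is required to form $\phi+\theta$ — you should pick the full images disjoint, which your shift construction can of course arrange.

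For the symmetry, your reduction $T_{\mathscr C}(\tau_{X^\bullet,Y^\bullet})=[u,1]_{\bar\phi_*(Y^\bullet\otimes X^\bullet)}$ is correct, but the bijection you extract from Corollary~\ref{cor:comparison-iso-Phi-reexpressed} is the restriction of $u$, which is precisely \emph{not} order-preserving (it is the block shuffle); the phrase "order-preserving bijection" is a slip. More substantively, identifying the coherence isomorphism of a block shuffle with $\tau_{\bigotimes_{i\le\ell}(\cdot),\bigotimes_{i>\ell}(\cdot)}$ is not a direct consequence of Theorem~\ref{thm:permutative-operadic-description} alone: one needs to chase an element through the operad composition square for $E\Sigma_*$, which is exactly the coherence lemma the paper proves immediately before this proposition. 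You should either cite that lemma or reproduce that diagram chase; leaving it at "by the operadic description" leaves a genuine (if small) gap.

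The remaining steps — well-definedness, functoriality, object-level monoidality, essential surjectivity via $(\overline C)$ with $\overline C=(C,\bm1,\bm1,\dots)\in\Phi\mathscr C$, full faithfulness via Lemma~\ref{lemma:free-to-choose-phi-psi}, and naturality — match the paper and are fine, modulo the notational slip of writing $(C,\bm1,\dots)$ for the $1$-tuple $(\overline C)$.
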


The proof will again involve a basic coherence statement:

\begin{lemma}
Let $\mathscr C$ be a permutative category, let $1\le \ell\le m$, $X_1,\dots,X_m\in\mathscr C$, and let $\sigma\in\Sigma_m$ be the permutation that moves the first $\ell$ entries to the end, i.e.~
\begin{equation*}
\sigma(i)=\begin{cases}
i + m - \ell & \text{if }i\le\ell\\
i - \ell & \text{otherwise.}
\end{cases}
\end{equation*}
Then the coherence isomorphism
\begin{equation}\label{eq:shuffle}
\bigotimes_{i=1}^k X_i\to\bigotimes_{i=1}^m X_{\sigma^{-1}(i)}
\end{equation}
associated to $\sigma$ agrees with the symmetry isomorphism
\begin{equation}\label{eq:transposition}
\tau_{\bigotimes_{i=1}^\ell X_i,\bigotimes_{i=\ell+1}^mX_i}.
\end{equation}
\begin{proof}
By Theorem~\ref{thm:permutative-operadic-description}, $\mathscr C$ is naturally an algebra over the Barratt-Eccles operad with $(\ref{eq:shuffle})$ being the action of the morphism $(\sigma,\id)$ of $ E\Sigma_m$ on $(\id_{X_1},\dots,\id_{X_m})$ and  $(\ref{eq:transposition})$ being the action of $(\pi,\id)$ on $(\bigotimes_{i=1}^\ell \id_{X_i},\bigotimes_{i=\ell+1}^m \id_{X_i})$, where $\pi\in\Sigma_2$ is the unique non-trivial element. In this setup, the required identity simply follows by chasing $\big((\pi, \id),\big((\id,\id),(\id,\id)\big),(\id_{X_1},\dots,\id_{X_m})\big)$ through the diagram
\begin{equation*}
\mskip-5mu
\begin{tikzcd}[row sep=tiny, column sep=small]
E\Sigma_2\times E\Sigma_\ell\times E\Sigma_{m-\ell}\times\mathscr C^{\times m}\arrow[dd, "\text{shuffle}"']\arrow[rr, "\text{composition}\times\mathscr C^{\times m}"] && E\Sigma_m\times\mathscr C^{\times m}\arrow[rd, bend left=10pt, "\text{action}"]\\
&\phantom{dummydu}& & \mathscr C\\
E\Sigma_2\times E\Sigma_\ell\times\mathscr C^{\times\ell}\times E\Sigma_{m-\ell}\times\mathscr C^{\times(m-\ell)}\arrow[rr, "E\Sigma_2\times\text{action}\times\text{action}"'] && E\Sigma_2\times\mathscr C^{\times2}\arrow[ur, bend right=10pt, "\text{action}"']
\end{tikzcd}
\end{equation*}
which commutes by what it means to be an $E\Sigma_*$-algebra.
\end{proof}
\end{lemma}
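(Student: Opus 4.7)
The plan is to realise both morphisms as specific values of the canonical $E\Sigma_*$-algebra action on $\mathscr C$ from Theorem~\ref{thm:permutative-operadic-description}, and then conclude by invoking compatibility of this action with operad composition in the Barratt--Eccles operad.

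First I would rewrite the two sides operadically. By Theorem~\ref{thm:permutative-operadic-description}, the coherence isomorphism $(\ref{eq:shuffle})$ is, by definition, the action $\alpha_m\bigl((\sigma,\id_m),(\id_{X_1},\dots,\id_{X_m})\bigr)$. For the other side, the iterated tensor products $\bigotimes_{i=1}^\ell X_i$ and $\bigotimes_{i=\ell+1}^m X_i$ (and their identity morphisms) are themselves the values of $\alpha_\ell$ and $\alpha_{m-\ell}$ at the identities of $E\Sigma_\ell$ and $E\Sigma_{m-\ell}$, so the symmetry isomorphism $(\ref{eq:transposition})$ can be written as $\alpha_2\bigl((\pi,\id_2),(A,B)\bigr)$ where $\pi\in\Sigma_2$ is the non-trivial element, $A=\alpha_\ell(\id_\ell,(\id_{X_1},\dots,\id_{X_\ell}))$, and $B=\alpha_{m-\ell}(\id_{m-\ell},(\id_{X_{\ell+1}},\dots,\id_{X_m}))$.

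Next I would apply the standard operad-algebra compatibility: $\alpha_2\bigl((\pi,\id),(A,B)\bigr)$ equals $\alpha_m$ applied to the image of $\bigl((\pi,\id),(\id_\ell,\id_\ell),(\id_{m-\ell},\id_{m-\ell})\bigr)$ under the operadic composition $E\Sigma_2\times E\Sigma_\ell\times E\Sigma_{m-\ell}\to E\Sigma_m$, evaluated on the concatenated tuple $(\id_{X_1},\dots,\id_{X_m})$. This is encoded by the commutativity of the diagram displayed in the excerpt. Using the explicit formula for composition in $E\Sigma_*$ recalled earlier, this image is simply $(\pi_{(\ell,m-\ell)},\id_m)$.

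The only bookkeeping left is the identification $\pi_{(\ell,m-\ell)}=\sigma$; unwinding the definition of the block shuffle for $\pi=(1\;2)$ yields $j\mapsto j+(m-\ell)$ for $1\le j\le\ell$ and $\ell+j\mapsto j$ for $1\le j\le m-\ell$, which exactly matches the permutation $\sigma$ of the statement. After this identification, both $(\ref{eq:shuffle})$ and $(\ref{eq:transposition})$ are equal to $\alpha_m\bigl((\sigma,\id_m),(\id_{X_1},\dots,\id_{X_m})\bigr)$, finishing the proof. I do not anticipate a genuine obstacle here: the one nontrivial ingredient, the shuffle identity $\pi_{(\ell,m-\ell)}=\sigma$, is a direct unpacking of the formula already recorded before Theorem~\ref{thm:permutative-operadic-description}.
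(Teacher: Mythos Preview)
Your proposal is correct and follows essentially the same route as the paper: both arguments identify the two morphisms as values of the $E\Sigma_*$-action and then invoke the operad-algebra compatibility (the paper phrases this as chasing an element through the displayed commutative diagram, while you spell out the resulting identity $\pi_{(\ell,m-\ell)}=\sigma$ explicitly). The only difference is presentational---you verify the block-shuffle identity by hand, whereas the paper leaves it implicit in the diagram chase.
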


\begin{proof}[Proof of Proposition~\ref{prop:T-equivalence}]
We begin by proving that $T_{\mathscr C}$ is well-defined. For this we first observe that any morphism $(X^{(1)},\dots,X^{(m)})\to(Y^{(1)},\dots,Y^{(n)})$ in $\Sigma\Phi\mathscr C$ indeed admits a representative  $(\psi,f,\phi)$ such that $\phi$ is $X^\bullet$-monotone and $\psi$ is $Y^\bullet$-monotone by Proposition~\ref{prop:suitably-monotone}-$(\ref{item:sm-existence})$.

We will now show that $f$ is indeed a morphism
\begin{equation*}
T_{\mathscr C}(X^\bullet)=\bigotimes_{i=1}^m\bigotimes_{j\in\omega} X_{j}^{(i)}\to\bigotimes_{i=1}^n\bigotimes_{j\in\omega} Y_{j}^{(i)}=T_{\mathscr C}(Y^\bullet);
\end{equation*}
namely, if $(i_1,j_1),\dots,(i_k,j_k)$ is the sequence of those $(i,j)$ such that $X^{(i)}_j\not=\bm1$, ordered lexicographically, and $(i_1',j_1'),\dots,(i_\ell',j_\ell')$ is the sequence of those $(i,j)$ such that $Y^{(i)}_j\not=\bm1$, again ordered lexicographically, then
\begin{equation*}
T_{\mathscr C}(X^\bullet)=X^{(i_1)}_{j_1}\otimes\cdots\otimes X^{(i_k)}_{j_k}
\quad\text{and}\quad
T_{\mathscr C}(Y^\bullet)=Y^{(i_1')}_{j_1'}\otimes\cdots\otimes X^{(i_\ell')}_{j_\ell'}
\end{equation*}
by strict unitality of $\otimes$. On the other hand, $f$ is by definition a morphism $\phi_*(X^\bullet)\to\psi_*(Y^\bullet)$ in $\Phi(\mathscr C)$, i.e.~a morphism
\begin{equation*}
\bigotimes_{i\in\omega}\phi_*(X^\bullet)_i\to\bigotimes_{i\in\omega}\psi_*(Y^\bullet)_i\
\end{equation*}
in $\mathscr C$. But
\begin{equation*}
\bigotimes_{i\in\omega}\phi_*(X^\bullet)_i=
\bigotimes_{i\in\omega\atop i = \phi(i_r,j_r)}X^{(i_r)}_{j_r}=
\bigotimes_{r=1}^k X^{(i_r)}_{j_r},
\end{equation*}
where we have used the strict unitality of $\otimes$ and the assumption that $\phi$ be $X^\bullet$-monotone. Analogously,
\begin{equation*}
\bigotimes_{i\in\omega}\psi_*(Y^\bullet)_i=
\bigotimes_{r=1}^\ell Y^{(i_r')}_{j_r'},
\end{equation*}
so $f$ indeed has the specified source and target.

Next, let us check that $T_{\mathscr C}$ is independent of the choice of representative. Indeed, if $[\psi,f,\phi]=[\psi',f',\phi']$ such that $\phi,\phi'$ are $X^\bullet$-monotone and $\psi,\psi'$ are $Y^\bullet$-monotone, then
\begin{equation*}
f'=[\psi',\psi]_{Y^\bullet}\circ f \circ[\phi,\phi']_{X^\bullet}.
\end{equation*}
But by Proposition~\ref{prop:suitably-monotone}-$(\ref{item:sm-uniqueness})$ both $[\phi,\phi']_{X^\bullet}$ and $[\psi',\psi]_{Y^\bullet}$ are the respective identities in $\mathscr C$, i.e.~$f=f'$ as morphisms in $\mathscr C$.

Now we can prove that $T_{\mathscr C}$ preserves compositions and identities. For this we let $\alpha\colon(X^{(1)},\dots,X^{(m)})\to (Y^{(1)},\dots,Y^{(n)})$ and $\beta\colon (Y^{(1)},\dots,Y^{(n)})\to (Z^{(1)},\dots,Z^{(o)})$ be any two composable morphisms. We then choose an $X^\bullet$-monotone injection $\phi\colon\bm{m}\times\omega\rightarrowtail\omega$, a $Y^\bullet$-monotone injection $\psi\colon\bm{n}\times\omega\rightarrowtail\omega$, and a $Z^\bullet$-monotone injection $\theta\colon\bm{o}\times\omega\rightarrowtail\omega$, yielding representatives $(\psi,f,\phi)$ of $\alpha$ and $(\theta,g,\psi)$ of $\beta$. Then $\beta\alpha=[\theta,gf,\phi]$ by definition of the composition, and hence
\begin{equation*}
T_{\mathscr C}(\beta\alpha)=gf=T_{\mathscr C}(\beta)T_{\mathscr C}(\alpha)
\end{equation*}
as desired. Similarly, if $(X^{(1)},\dots,X^{(m)})$ is any object in $\Sigma\Phi{\mathscr C}$, and $\phi\colon\bm{m}\times\omega\rightarrowtail\omega$ is $X^\bullet$-monotone, then the identity of $X^\bullet$ is given by $[\phi,\id_{\phi_*(X^\bullet)},\phi]$; by the same argument as above, the identity of $\phi_*(X^\bullet)$ in $\Phi(\mathscr C)$ is given by the identity of $\bigotimes_{i=1}^m\bigotimes_{j\in\omega}X^{(i)}_j$ in $\mathscr C$, so
$T_{\mathscr C}(\id_{X^\bullet})=\id_{\bigotimes_{i=1}^m\bigotimes_{j\in\omega}X^{(i)}_j}=\id_{T_{\mathscr C}(X^\bullet)}$. This completes the proof that $T_{\mathscr C}$ is a well-defined functor.

Now we will show that $T_{\mathscr C}$ is strict symmetric monoidal. It is again obvious that $T_{\mathscr C}$ commutes with $\otimes$ on the level of objects and that it strictly preserves the tensor unit. To see that $T_{\mathscr C}$ also commutes with $\otimes$ on the level of morphisms, we let $\alpha\colon(A^{(1)},\dots,A^{(m)})\to(B^{(1)},\dots,B^{(n)})$ and $\beta\colon(C^{(1)},\dots,C^{(o)})\to(D^{(1)},\dots,D^{(p)})$ be any two morphisms in $\Sigma\Phi\mathscr C$. We then pick an $(A^\bullet\otimes C^\bullet)$-monotone $\phi\colon\bm{(m+o)}\times\omega\rightarrowtail\omega$ and a $(B^\bullet\otimes D^\bullet)$-monotone $\psi\colon\bm{(n+p)}\times\omega\rightarrowtail\omega$. Then $\phi|_{\{1,\dots,m\}}$ is $A^\bullet$-monotone, $\phi|_{\{m+1,\dots,m+o\}}$ is $C^\bullet$-monotone, $\psi|_{\{1,\dots,n\}}$ is $B^\bullet$-monotone, and $\psi|_{\{n+1,\dots,n+p\}}$ is $D^\bullet$-monotone. We can therefore write $\alpha=[\psi|_{\{1,\dots,n\}},f,\phi|_{\{1,\dots,m\}}]$ and $\beta=[\psi|_{\{n+1,\dots,n+p\}},g,\phi|_{\{m+1,\dots,m+o\}}]$ for suitable
\begin{align*}
(\phi|_{\{1,\dots,m\}})_*(A^\bullet)&\xrightarrow{f}(\psi|_{\{1,\dots,n\}})_*(B^\bullet)\\
(\phi|_{\{m+1,\dots,m+o\}})_*(C^\bullet)&\xrightarrow{g}(\psi|_{\{n+1,\dots,n+p\}})_*(D^\bullet),
\end{align*}
and by definition $T_{\mathscr C}(\alpha)=f$ and $T_{\mathscr C}(\beta)=g$. On the other hand,
\begin{equation*}
\alpha\otimes\beta=[\psi|_{\{1,\dots,n\}}+\psi|_{\{n+1,\dots,n+p\}},f+g,\phi|_{\{1,\dots,m\}}+\phi|_{\{m+1,\dots,m+o\}}]=[\psi,f+g,\phi]
\end{equation*}
and hence $T_{\mathscr C}(\alpha\otimes\beta)=f+g$. The claim therefore amounts to showing that $f+g=f\otimes g$. To this end we recall that the sum in $\Phi\mathscr C$ of two morphisms $a\colon X\to Y$ and $b\colon X'\to Y'$ such that $\supp(X)\cap\supp(X')=\varnothing$ and $\supp(Y)\cap\supp(Y')=\varnothing$ is defined by conjugating $a\otimes b$ with the coherence isomorphisms associated to the tautological bijections
\begin{equation}\label{eq:supp-shuffle}
\begin{aligned}
\supp(X)\amalg\supp(X')&\to\supp(X)\cup\supp(X')\\ \supp(Y)\amalg\supp(Y')&\to\supp(Y)\cup\supp(Y')
\end{aligned}
\end{equation}
between the disjoint unions and the internal unions, where in each case the left hand side is again made into a totally ordered set by demanding that that any element of the first summand should be smaller than any element of the second summand. It therefore suffices to show that the maps in $(\ref{eq:supp-shuffle})$ are actually monotone for $X=(\phi|_{\{1,\dots,m\}})_*(A^\bullet), X'=(\phi|_{\{m+1,\dots,m+o\}})_*(C^\bullet)$ and $Y=(\psi|_{\{1,\dots,n\}})_*(B^\bullet),Y'=(\psi|_{\{n+1,\dots,n+p\}})_*(D^\bullet)$.

We will prove this for the first map, the argument for the second one being analogous. Namely, we observe that  $\phi|_{\{1,\dots,m\}}(i,j)<\phi|_{\{m+1,\dots,m+o\}}(i',j')$ whenever $A^{(i)}_j\not=\bm1$, $C^{(i')}_{j'}\not=\bm1$ because $\phi$ is $(A^\bullet\otimes C^\bullet)$-monotone. It follows immediately as desired that $k<k'$ for all $k,k'\in\omega$ such that $(\phi|_{\{1,\dots,m\}})_*(A^\bullet)_k\not=\bm1$ and $(\phi|_{\{m+1,\dots,m+o\}})_*(C^\bullet)_{k'}\not=\bm1$, which then completes the argument that $F$ is strict monoidal.

To see that $T_{\mathscr C}$ commutes with $\tau$, we let $(X^{(1)},\dots,X^{(m)}),(Y^{(1)},\dots,Y^{(n)})\in\Sigma\Phi\mathscr C$ be arbitrary. We pick an $(X^\bullet\otimes Y^\bullet)$-monotone injection $\phi\colon\bm{(m+n)}\times\omega\rightarrowtail\omega$ as in the proof of Proposition~\ref{prop:suitably-monotone}-$(\ref{item:sm-existence})$, so that $\supp\big(\phi_*(X^\bullet\otimes Y^\bullet)\big)=\{1,\dots,k\}$ for some $k\in\omega$; then $(X^\bullet\otimes Y^\bullet)$-monotonicity already implies that $\supp\big((\phi|_{\{1,\dots,m\}})_*(X^\bullet)\big)=\{1,\dots,\ell\}$ for some $\ell\le k$.

As $\phi$ is in particular an injection, $\tau_{X^\bullet,Y^\bullet}\colon X^\bullet\otimes Y^\bullet\to Y^\bullet\otimes X^\bullet$ is given by $[\bar\phi,\id,\phi]$. However, $\bar\phi$ is usually \emph{not} $(Y^\bullet\otimes X^\bullet)$-monotone. We now define $u\in\mathcal M$ via
\begin{equation*}
u(i)=\begin{cases}
i+k-\ell & \text{if }i\le\ell\\
i-\ell & \text{if }\ell<i\le k\\
i & \text{otherwise}.
\end{cases}
\end{equation*}
Then one easily checks that $u\bar\phi$ is indeed $(Y^\bullet\otimes X^\bullet)$-monotone. On the other hand
\begin{equation*}
[\bar\phi,\id_{\phi_*(X^\bullet\otimes Y^\bullet)},\phi]=[u\bar\phi, [u\bar\phi,\bar\phi]_{Y^\bullet\otimes X^\bullet},\phi] =
[u\bar\phi, [u,1]_{\bar\phi_*(Y^\bullet\otimes X^\bullet)},\phi]
\end{equation*}
by Lemma~\ref{lemma:generalized-structure-maps}-$(\ref{item:gsm-EM-equivariant})$, hence $T_{\mathscr C}(\tau_{X^\bullet,Y^\bullet})=[u,1]_{\bar\phi_*(Y^\bullet\otimes X^\bullet)}$.

We now plug in the definition of $[u,1]_{Z}$: this was given by choosing a $K\gg0$ such that $Z_i=\bm1=(u_*Z)_i$ for $i>K$ and a permutation $\sigma\in\Sigma_K$ such that $u(i)=\sigma(i)$ for all $i\le K$ with $Z_i\not=\bm1$; then as a morphism in $\mathscr C$, $[u,1]_{Z}$ is the coherence isomorphism associated to $\sigma$. In the case that $Z=\bar\phi_*(Y^\bullet\otimes X^\bullet)$ we may take $K=k$ and $\sigma(i)=u(i)$ for \emph{all} $i\le k$, so we altogether see that $T_{\mathscr C}(\tau_{X^\bullet,Y^\bullet})=[u,1]_{\bar\phi_*(Y^\bullet\otimes X^\bullet)}$ is the coherence isomorphism
\begin{align*}
&\bigotimes_{i=1}^k \big(\bar\phi_*(Y^\bullet\otimes X^\bullet)\big)_i=\bigotimes_{i=1}^\ell \big((\phi|_{\{1,\dots,m\}})_*X)_i\otimes
\bigotimes_{i=\ell+1}^k \big((\phi|_{\{m+1,\dots,m+n\}})_*Y)_i\\
&\quad \to\bigotimes_{i=\ell+1}^k \big((\phi|_{\{m+1,\dots,m+n\}})_*Y)_i\otimes\bigotimes_{i=1}^\ell \big((\phi|_{\{1,\dots,m\}})_*X)_i=\bigotimes_{i=1}^k \big((u\bar\phi)_*(Y^\bullet\otimes X^\bullet)\big)_i
\end{align*}
associated to the permutation $\sigma$ that moves the first $\ell$ entries to the end. On the other hand
\begin{equation*}
\tau_{T_{\mathscr C}(X^\bullet), T_{\mathscr C}(Y^\bullet)}=\tau_{\bigotimes_{i=1}^\ell(\phi|_{\{1,\dots,m\}})_*(X^\bullet)_i,\bigotimes_{i=\ell+1}^k(\phi|_{\{m+1,\dots,m+n\}})_*(Y^\bullet)_i}
\end{equation*}
(where we have again used the construction of $\phi$ and strict unitality of $\otimes$), and these two agree by the previous lemma.

Altogether, we have now shown that $T_{\mathscr C}$ is a strict symmetric monoidal functor. Let us now show that it is an underlying equivalence of categories. Indeed, if $X\in\mathscr C$ is arbitrary, then the object $(\overline X)\in\Sigma\Phi\mathscr C$ with $\overline X\in\Phi\mathscr C$ being the sequence with $\overline X_1=X$ and $\overline X_i=\bm1$ otherwise, is a preimage, i.e.~$T_{\mathscr C}$ is even surjective on objects. On the other hand, if $(X^{(1)},\dots, X^{(m)}), (Y^{(1)},\dots,Y^{(n)})\in\Sigma\Phi\mathscr C$, then we can pick an $X^\bullet$-monotone injection $\phi\colon\bm{m}\times\omega\rightarrowtail\omega$ and a $Y^\bullet$-monotone injection $\psi\colon\bm{n}\times\omega\rightarrowtail\omega$. Then
\begin{align*}
\Hom_{\mathscr C}\big(\bigotimes_{i=1}^m\bigotimes_{j\in\omega} X^{(i)}_j,
\bigotimes_{i=1}^n\bigotimes_{j\in\omega} Y^{(i)}_j\big)=
\Hom_{\Phi\mathscr C}(\phi_*X^\bullet, \psi_*Y^\bullet) &\to \Hom_{\Sigma\Phi\mathscr C}(X^\bullet, Y^\bullet)\\
f&\mapsto[\psi,f,\phi]
\end{align*}
is by construction right inverse to
\begin{equation}\label{eq:T-C-hom}
\begin{aligned}
\Hom_{\Sigma\Phi\mathscr C}(X^\bullet, Y^\bullet)&\xrightarrow{T_{\mathscr C}}\Hom_{\mathscr C}(T_{\mathscr C}(X^\bullet), T_{\mathscr C}(Y^\bullet))\\
&=\Hom_{\mathscr C}\big(\bigotimes_{i=1}^m\bigotimes_{j\in\omega} X^{(i)}_j,\bigotimes_{i=1}^n\bigotimes_{j\in\omega} Y^{(i)}_j\big),
\end{aligned}
\end{equation}
but it is also bijective by Lemma~\ref{lemma:free-to-choose-phi-psi}. Thus also $(\ref{eq:T-C-hom})$ is bijective, i.e.~$T_{\mathscr C}$ is fully faithful and thus an equivalence of categories.

Finally, we have to show that $T_{\mathscr C}$ is natural in strict symmetric monoidal functors, i.e.~for any strict symmetric monoidal functor $F\colon\mathscr C\to\mathscr D$ of permutative categories $T_{\mathscr D}\circ(\Sigma\Phi F)=F\circ T_{\mathscr C}$. This is obvious on the level of objects; to prove the claim on the level of morphisms, we let $(X^{(1)},\dots,X^{(m)}),(Y^{(1)},\dots,Y^{(n)})\in\Sigma\Phi\mathscr C$, and we pick an $X^\bullet$-monotone injection $\phi\colon\bm{m}\times\omega\rightarrowtail\omega$ and a $Y^\bullet$-monotone injection $\psi\colon\bm{n}\times\omega\rightarrowtail\omega$. Then any morphism $X^\bullet\to Y^\bullet$ is of the form $[\psi,f,\phi]$ for some $f\colon\phi_*(X^\bullet) \to\psi_*(Y^\bullet)$ in $\Phi\mathscr C$, i.e.~a map $f\colon\bigotimes_{i=1}^m\bigotimes_{j\in\omega}X^{(i)}_j\to\bigotimes_{i=1}^n\bigotimes_{j\in\omega} Y^{(i)}_j$ in $\mathscr C$, and by definition $(\Sigma\Phi F)[\psi,f,\phi]=[\psi, (\Phi F)f,\phi]=[\psi, Ff, \phi]\colon (\Sigma\Phi F)(X^\bullet)\to (\Sigma\Phi F)(Y^\bullet)$. We now claim that $\phi$ is $(\Sigma\Phi F)(X^\bullet)$-monotone and that $\psi$ is $(\Sigma\Phi F)(Y^\bullet)$-monotone. Indeed, it suffices to prove this for the first statement, for which we observe that $(\Sigma\Phi F)(X^\bullet)^{(i)}_j = F(X^{(i)}_j)$. As $F$ is strict symmetric monoidal, this means that $(\Sigma\Phi F)(X^\bullet)^{(i)}_j=\bm1$ whenever $X^{(i)}_j=\bm 1$; thus, the claim follows immediately from the definition of $(\Sigma\Phi F)(X^\bullet)$-monotonicity. But then
\begin{equation*}
T_{\mathscr D}\big((\Sigma\Phi F)[\psi,f,\phi]\big)=T_{\mathscr D}[\psi, Ff, \phi]=Ff = F(T_{\mathscr C}[\psi, f,\phi]).
\end{equation*}
This completes the proof of the proposition.
\end{proof}

Together with Proposition~\ref{prop:comparison-schwede} we now immediately conclude:

\begin{cor}
For any injection $\mu\colon\bm2\times\omega\rightarrowtail\omega$ there exists a preferred natural levelwise equivalence from $\mu^*\circ\Phi$ to the inclusion $\cat{PermCat}\to\cat{SymMonCat}$.\qed
\end{cor}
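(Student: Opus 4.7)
The plan is that this corollary is essentially a formal consequence of the two main results of the section, obtained by pasting them together. Concretely, I would specialize Proposition~\ref{prop:comparison-schwede} to the parsummable category $\mathcal C=\Phi(\mathscr C)$ for each small permutative category $\mathscr C$, yielding a preferred strong symmetric monoidal equivalence $\widetilde\nu_{\Phi(\mathscr C)}\colon\mu^*\Phi(\mathscr C)\to\Sigma\Phi(\mathscr C)$. Because Proposition~\ref{prop:comparison-schwede} provides naturality in arbitrary morphisms of parsummable categories, and $\Phi(F)$ is such a morphism whenever $F\colon\mathscr C\to\mathscr D$ is a strict symmetric monoidal functor of permutative categories, this family is automatically natural in $\mathscr C$.

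Next, I would postcompose with the functor $T_{\mathscr C}\colon\Sigma\Phi(\mathscr C)\to\mathscr C$ constructed in Proposition~\ref{prop:T-equivalence}, which is a strict symmetric monoidal underlying equivalence natural in strict symmetric monoidal functors. The composite $T_{\mathscr C}\circ\widetilde\nu_{\Phi(\mathscr C)}$ is then strong symmetric monoidal (being the composition of a strong and a strict symmetric monoidal functor) and an underlying equivalence (being a composition of two equivalences), so it defines the required strong symmetric monoidal equivalence $\mu^*\Phi(\mathscr C)\to\mathscr C$.

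Naturality of this composite in strict symmetric monoidal functors $F\colon\mathscr C\to\mathscr D$ follows immediately by pasting the naturality square of $\widetilde\nu$ (applied to the parsummable morphism $\Phi F$) on the left with the naturality square of $T$ on the right. There is no genuine obstacle here: all the real work has already been done, and the corollary amounts to little more than recording the composition of the two previously established natural equivalences.
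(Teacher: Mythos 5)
Your proposal is correct and is precisely the paper's intended argument: the corollary immediately before it is prefaced with ``Together with Proposition~\ref{prop:comparison-schwede} we now immediately conclude,'' meaning one simply postcomposes the natural strong symmetric monoidal equivalence $\widetilde\nu_{\Phi(\mathscr C)}\colon\mu^*\Phi(\mathscr C)\to\Sigma\Phi(\mathscr C)$ with the natural strict symmetric monoidal equivalence $T_{\mathscr C}\colon\Sigma\Phi(\mathscr C)\to\mathscr C$ from Proposition~\ref{prop:T-equivalence}. Nothing to add.
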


\begin{rk}\label{rk:comparison-schwede}
In \cite[Remark~11.4]{schwede-k-theory}, Schwede already sketched the existence of a natural levelwise equivalence from the inclusion to $\mu^*\circ\Phi$. Taking this result as well as Theorem~\ref{thm:perm-vs-sym} above for granted, we could have proven our main result without using Proposition~\ref{prop:T-equivalence}: namely, they imply that $\mu^*\circ\Phi$ is a homotopy equivalence, while we will see in the next section that $\Sigma$ is homotopical and that $\Phi\circ\Sigma$ is connected by a zig-zag of levelwise underlying equivalences to the identity, hence in particular a homotopy equivalence. By $2$-out-of-$6$ one can then conclude that $\Phi$ is a homotopy equivalence, and it then follows formally that $\Sigma$ is not only right but also left homotopy inverse to it.

However, we have decided to give the above proof for two reasons: firstly, it keeps the proof of the main theorem self-contained, and secondly (and more importantly) it avoids appealing to Theorem~\ref{thm:perm-vs-sym} and \cite[Remark~11.4]{schwede-k-theory}, both of whose proofs have only been sketched.
\end{rk}

\section{Proof of the main theorem}\label{sec:main-thm}
In this section we will finally prove the main result of this article:

\begin{thm}\label{thm:main-thm}
The functors $\Phi$ and $\Sigma$ define mutually inverse homotopy equivalences $\cat{PermCat}\rightleftarrows\cat{ParSumCat}$ with respect to the underlying equivalences of categories.
\end{thm}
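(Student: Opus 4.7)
The plan is to reduce Theorem~\ref{thm:main-thm} to two remaining points. Given Lemma~\ref{lemma:Phi-reflect-preserve} (ensuring $\Phi$ is homotopical) and Proposition~\ref{prop:T-equivalence} (which supplies $T\colon\Sigma\Phi\Rightarrow\id_{\cat{PermCat}}$ as a natural levelwise underlying equivalence), it suffices to establish (a) that $\Sigma$ preserves underlying equivalences, and (b) that $\Phi\Sigma$ is connected to $\id_{\cat{ParSumCat}}$ by a zig-zag of natural levelwise underlying equivalences. Combined formally, these yield the asserted mutually inverse homotopy equivalences.

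For (a), I would appeal to Proposition~\ref{prop:comparison-schwede}: for any fixed $\mu\colon\bm2\times\omega\rightarrowtail\omega$, the natural strong symmetric monoidal equivalence $\widetilde\nu_\mathcal C\colon\mu^*\mathcal C\to\Sigma\mathcal C$ is in particular an underlying equivalence of categories, while by Construction~\ref{constr:comparison-schwede} the underlying functor of $\mu^*F$ coincides with that of $F$ itself. An application of $2$-out-of-$3$ to the naturality square $\Sigma F\circ\widetilde\nu_\mathcal C=\widetilde\nu_\mathcal D\circ\mu^*F$ then shows that $\Sigma F$ is an underlying equivalence whenever $F$ is.

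For (b), my plan is to construct for each $\mathcal C$ a zig-zag $\mathcal C\Leftrightarrow\cdots\Leftrightarrow\Phi\Sigma(\mathcal C)$ in $\cat{ParSumCat}$. The candidate direct comparisons---sending $X\in\mathcal C$ to the singleton sequence $((X),\epsilon,\epsilon,\dotsc)\in\Phi\Sigma(\mathcal C)$, or sending $X^\bullet\in\Phi\Sigma(\mathcal C)$ to $\sum_i\phi^{(i)}_*(X^{(i)})\in\mathcal C$ for appropriately chosen injections $\phi^{(i)}\colon\bm{m_i}\times\omega\rightarrowtail\omega$---are not strict morphisms of parsummable categories: the former fails both additivity and $E\mathcal M$-equivariance (it collapses disjointly-supported summands into a single position and does not respect the positional shift under $\mathcal M$), while the latter depends on an arbitrary choice of injection data and does not commute with the $E\mathcal M$-action on the nose. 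I would therefore introduce an intermediate parsummable category $Z(\mathcal C)$, functorial in $\mathcal C$, whose objects package together an element of $\Phi\Sigma(\mathcal C)$ with the additional injection data needed to interpret it as a sum in $\mathcal C$, and whose structure is calibrated so that both the forgetful map $Z(\mathcal C)\to\Phi\Sigma(\mathcal C)$ and the evaluation map $Z(\mathcal C)\to\mathcal C$ (sending $(X^\bullet,\phi^\bullet)$ to $\sum_i\phi^{(i)}_*(X^{(i)})$) become strict parsummable functors.

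The hard part will be to get this intermediate right: in particular, to arrange that $Z(\mathcal C)$ is itself a tame $E\mathcal M$-category despite the fact that each individual $\phi^{(i)}\colon\bm{m_i}\times\omega\rightarrowtail\omega$ has infinite image, and to reconcile the two a~priori different notions of disjoint support in play---the positional one on $\Phi\Sigma(\mathcal C)$ indexed by $\omega$ and the one on $\mathcal C$ indexed by the images of the $\phi^{(i)}$---into a single symmetric monoidal structure on $Z(\mathcal C)$ through which both comparison maps are simultaneously additive and equivariant. Once $Z(\mathcal C)$ is set up correctly, the remaining verifications should be routine: essential surjectivity of both comparison maps reduces to the existence of suitable placements (guaranteed by countability of $\bm{m_i}\times\omega$), and full faithfulness reduces to Lemma~\ref{lemma:free-to-choose-phi-psi} together with the definition of the hom-sets in $\Phi\Sigma(\mathcal C)$.
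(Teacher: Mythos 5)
Your reduction to the two statements (a) $\Sigma$ is homotopical, and (b) $\Phi\Sigma\simeq\id_{\cat{ParSumCat}}$ via a natural zig-zag of levelwise underlying equivalences, is the right frame, and your argument for (a) via Proposition~\ref{prop:comparison-schwede} is correct and moreover a different, arguably cleaner, route than the paper's (which instead deduces homotopicality of $\Sigma$ \emph{after} establishing the zig-zag, by $2$-out-of-$3$ plus the fact that $\Phi$ reflects underlying equivalences). So far so good.

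Part (b) is where the proposal has a genuine gap. You correctly identify that neither the ``include $X$ as a singleton sequence'' functor $\mathcal C\to\Phi\Sigma(\mathcal C)$ nor the ``evaluate via a chosen $\phi^\bullet$'' functor $\Phi\Sigma(\mathcal C)\to\mathcal C$ is a strict parsummable morphism, and you propose to repair this by building an intermediate $Z(\mathcal C)$ whose objects are pairs $(X^\bullet,\phi^\bullet)$. But you then flag exactly the two issues---tameness, despite the $\phi^{(i)}$ having infinite image, and reconciling the positional support on $\Phi\Sigma(\mathcal C)$ with the $\phi$-induced support on $\mathcal C$---and leave them unresolved. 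These are not routine: if the injection data is genuinely part of the object, the obvious $\mathcal M$-action on $Z(\mathcal C)$ acts on the $\phi^{(i)}$'s, and since $\phi^{(i)}\colon\bm{m_i}\times\omega\rightarrowtail\omega$ has infinite image, no finite $A\subset\omega$ will stabilize it under postcomposition by injections fixing $A$ pointwise. That is, $Z(\mathcal C)$ as sketched is \emph{not} tame, so it is not even a legitimate object of $\cat{$\bm{E\mathcal M}$-Cat}^\tau$, let alone a parsummable category. Without a concrete fix for this---and you offer none---the plan does not go through.

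The paper resolves it by a genuinely different design. It first shrinks $\Phi\Sigma(\mathcal C)$ to the equivalent full parsummable subcategory $\Theta(\mathcal C)$ of sequences of singletons/empties, fixes a single injection $\phi\colon\omega\times\omega\rightarrowtail\omega$ once and for all (so no injection data lives in the objects), and constructs a sum-preserving but non-equivariant functor $S_{\mathcal C}\colon\Theta(\mathcal C)\to\mathcal C$. It then introduces an abstract mapping-cylinder construction: for any sum-preserving $F\colon\mathcal C\to\mathcal D$ one first forms $F_*\mathcal C$ (same objects as $\mathcal C$, hom-sets borrowed from $\mathcal D$ via $F$, the $\mathcal M$-action still the one on $\Ob\mathcal C$) and then $\Cyl(F)\mathrel{:=}(F_\mu)_*(\mathcal C\times\mathcal D)$ with $F_\mu=\mu_*\circ(F\times\id)$. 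Crucially, the objects of $\Cyl(F)$ are just pairs $(X,Y)\in\mathcal C\times\mathcal D$; tameness is inherited verbatim from $\mathcal C\times\mathcal D$ because only the hom-sets are modified, and the non-equivariance of $F$ is absorbed entirely into the morphism level. The two inclusions $I^{(1)}\colon\mathcal C\to\Cyl(F)$ and $I^{(2)}\colon\mathcal D\to\Cyl(F)$ are then honest morphisms of parsummable categories and, when $F$ is an (essential or actual) underlying equivalence, underlying equivalences themselves. Applied to $S_\bullet$ and combined with $\Theta\hookrightarrow\Phi\Sigma$, this gives the natural zig-zag $\Phi\Sigma\Leftarrow\Theta\Rightarrow\Cyl(S_\bullet)\Leftarrow\id$. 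If you want to rescue your route, the move to make is to \emph{not} package the injections into the objects but to move the correction into the hom-sets, which is essentially the $(\blank)_*$ device.
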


To this end, we have to construct a zig-zag of natural levelwise underlying equivalences between $\Phi\Sigma$ and the identity of $\cat{ParSumCat}$. Let us give the basic idea for this first: if $\mathcal C$ is any parsummable category, then an object of $\Phi\Sigma(\mathcal C)$ is given by a sequence $X_\bullet^\bullet=(X^\bullet_1,X^\bullet_2,\dots)$ of tuples of objects of $\mathcal C$, such that almost all these tuples are equal to the empty tuple $\epsilon$. We want to cook up a single object of $\mathcal C$ from this, and an obvious thing to try is to sum the individual entries. Of course, in general the entries of $X^\bullet_\bullet$ might not be summable, but we can arrange this by choosing (once and for all) an injection $\phi\colon\omega^{\times3}\rightarrowtail\omega$ and replacing $X^{(i)}_j$ by $\phi(i,j,\blank)_*(X^{(i)}_j)$ first. Put differently, if we view $X^\bullet_\bullet$ in the evident way as a family indexed by a finite set $A\subset\omega\times\omega$, then we are sending $X^\bullet_\bullet$ to $(\phi|_A)_*(X^\bullet_\bullet)$.

While one can indeed extend the above assignment to an equivalence $\Phi\Sigma(\mathcal C)\to\mathcal C$ of ordinary categories in a natural way, this is in general \emph{not} a morphism of parsummable categories: namely, this functor is usually not $E\mathcal M$-equivariant, and in fact not even $\mathcal M$-equivariant on objects. However, Lemma~\ref{lemma:generalized-action}-$(\ref{item:lga-partition})$ can be interpreted as saying that it is still in a suitable sense compatible with summation.

To solve the equivariance issue, we will provide a general construction below turning any such functor $F\colon\mathcal C\to\mathcal D$ into a zig-zag $\mathcal C\to \Cyl(F)\xleftarrow{\simeq}\mathcal D$ of morphisms of parsummable categories. The existence of such a construction is in fact not surprising: for example, if we forget about the sum operations, then any non-equivariant functor between $E\mathcal M$-categories can be replaced by an (a priori possibly longer) zig-zag of equivariant functors. One way to see this, is to view $\cat{$\bm{E\mathcal M}$-Cat}$ as the category of simplicially enriched functors $B(E\mathcal M)\to\cat{Cat}$, where $B(E\mathcal M)$ is the simplicially enriched category with one object, whose simplicial monoid of endomorphisms is given by the nerve of $E\mathcal M$. As $E\mathcal M$ is a contractible groupoid, abstract nonsense then tells us that the forgetful functor $\forget\colon\cat{$\bm{E\mathcal M}$-Cat}\to\cat{Cat}$ induces an equivalence after localizing at the underlying equivalences of categories. If we take the usual Gabriel-Zisman construction of localizations, then for any $E\mathcal M$-categories $\mathcal C,\mathcal D$ choosing a preimage of a given functor $\forget\mathcal C\to\forget\mathcal D$ in the localization of $\cat{Cat}$ yields such a zig-zag.

\subsection{A mapping cylinder construction}
In this subsection, we will provide the general strictification construction promised above. More precisely, we will consider the following kind of functors:

\begin{defi}
Let $\mathcal C,\mathcal D$ be parsummable categories and let $F\colon\mathcal C\to\mathcal D$ be a functor of their underlying categories. Then we say that $F$ \emph{preserves sums} if the following conditions are satisfied:
\begin{enumerate}
\item $F(0)=0$
\item $F\times F\colon \mathcal C\times\mathcal C\to\mathcal D\times\mathcal D$ restricts to a functor $\mathcal C\boxtimes\mathcal C\to\mathcal D\boxtimes\mathcal D$, which we denote by $F\boxtimes F$, and the diagram
\begin{equation*}
\begin{tikzcd}
\mathcal C\boxtimes\mathcal C\arrow[d, "+"'] \arrow[r, "F\boxtimes F"] & \mathcal D\boxtimes\mathcal D\arrow[d, "+"]\\
\mathcal C\arrow[r, "F"'] & \mathcal D
\end{tikzcd}
\end{equation*}
commutes.
\end{enumerate}
\end{defi}

\begin{rk}
We can reformulate the first half of the second condition as saying that $\supp_{\mathcal D} F(X)\cap\supp_{\mathcal D} F(Y)=\varnothing$ for all $X,Y\in{\mathcal C}$ with $\supp_{\mathcal C}(X)\cap\supp_{\mathcal C}(Y)=\varnothing$. This makes it clear that more generally $F^{\times n}$ restricts to $F^{\boxtimes n}\colon\mathcal C^{\boxtimes n}\to\mathcal D^{\boxtimes n}$ for all $n\ge0$.
\end{rk}

\begin{ex}
Any morphism of parsummable categories preserves sums in the above sense.
\end{ex}

\begin{ex}\label{ex:action-sum-preserving}
If $\mathcal C$ is any parsummable category and $u\in\mathcal M$, then $u_*\colon\mathcal C\to\mathcal C$ preserves sums (although it is typically not $E\mathcal M$-equivariant): the first condition follows from the fact that $0$ has empty support (i.e.~it is $\mathcal M$-fixed), while the second one is a consequence of $E\mathcal M$-equivariance of the sum functor $\mathcal C\boxtimes\mathcal C\to\mathcal C$.
\end{ex}

\begin{rk}\label{rk:product-sum-preserving}
The category $\cat{ParSumCat}$ is complete, with finite limits created in $\cat{Cat}$, see~\cite[Example~4.11]{schwede-k-theory}. In particular, given two parsummable categories, the product of their underlying categories inherits a natural tame $E\mathcal M$-action and parsummable structure.

If $F_1\colon\mathcal C_1\to\mathcal D_1,F_2\colon\mathcal C_2\to\mathcal D_2$ preserve sums, then it is not true in general that $F_1\times F_2\colon\mathcal C_1\times\mathcal C_2\to\mathcal D_1\times\mathcal D_2$ preserves sums: namely, it can happen that while $(X_1,X_2), (Y_1,Y_2)\in\mathcal C_1\times\mathcal C_2$ have disjoint supports, their images under $F_1\times F_2$ do not. However, by assumption on $F_1$ and $F_2$, the only thing that can go wrong is that $\supp F_1(X_1)\cap\supp F_2(Y_2)\not=\varnothing$ or $\supp F_2(X_2)\cap \supp F_1(Y_1)\not=\varnothing$. Thus, this issue disappears for example as soon as $F_1\times F_2$ factors through $\mathcal D_1\boxtimes\mathcal D_2$.

Once we know, however, that $F_1\times F_2$ sends summable pairs of objects of $\mathcal C_1\times\mathcal C_2$ to summable pairs of objects of $\mathcal D_1\times\mathcal D_2$, it is trivial to check that $F_1\times F_2$ preserves sums as these can be calculated componentwise (assuming they exist).
\end{rk}

Before we can provide the desired strictification procedure, we need an auxiliary construction:

\begin{constr}\label{constr:F-lower-star}
Let $\mathcal C,\mathcal D$ be parsummable categories and let $F\colon\mathcal C\to\mathcal D$ be a functor of their underlying categories that preserves sums.

We now define a parsummable category $F_*\mathcal C$ as follows: the small category $F_*\mathcal C$ has objects the objects of $\mathcal C$, and if $X,Y\in F_*\mathcal C$, then we set $\Hom_{F_*\mathcal C}(X,Y)\mathrel{:=}\Hom_{\mathcal D}(FX,FY)$ with the composition inherited from $\mathcal D$.

The $\mathcal M$-action on $\Ob(F_*\mathcal C)=\Ob(\mathcal C)$ is inherited from the $E\mathcal M$-action on $\mathcal C$. If $u\in\mathcal M$, $X\in F_*\mathcal C$, then $u_\circ^X\colon X\to u_*X$ is given by the morphism $F(u_\circ^X)\colon F(X)\to F(u_*X)$, i.e.~the image of the structure isomorphism in $\mathcal C$ under $F$.

The sum on $\Ob(F_*\mathcal C)=\Ob(\mathcal C)$ is given by the sum in $\mathcal C$; in particular, the additive unit is the additive unit of $\mathcal C$. The sum on morphisms is given by the sum in $\mathcal D$.
\end{constr}

\begin{lemma}
The above is a well-defined parsummable category.
\begin{proof}
It is clear that $F_*\mathcal C$ is a well-defined small category.

The above defines an $\mathcal M$-action on $\Ob(F_*\mathcal C)=\Ob(\mathcal C)$ as $\mathcal C$ was assumed to be an $E\mathcal M$-category. To prove that $F_*\mathcal C$ is an $E\mathcal M$-category, it is therefore enough to show that $u^{v_*X}_\circ v_\circ^X=(uv)^X_\circ$ as morphisms in $\mathcal D$ for any $X\in F_*\mathcal{C}$ and $u,v\in\mathcal M$. Plugging in the definitions, the left hand side is given by $F(u^{v_*X})F(v_\circ^X)=F(u^{v_*X}v_\circ^X)$ while the right hand side is given by $F((uv)^X_\circ)$. Thus, the claim follows again immediately from $\mathcal C$ being an $E\mathcal M$-category.

By definition of the $\mathcal M$-action, $\supp_{F_*\mathcal C}(X)=\supp_{\mathcal C}(X)$ for all $X\in F_*\mathcal C$. In particular, $F_*\mathcal C$ is tame, the sum is well-defined on objects, and $0$ has empty support. Commutativity, associativity, and unitality of the sum on the level of objects are then immediately inherited from $\mathcal C$.

Next, let us show that the sum is well-defined on morphisms. For this we let $X,X',Y,Y'\in F_*\mathcal C$ and we let $f\colon F(X)\to F(Y)$ and $g\colon F(X')\to F(Y')$ define morphisms $X\to Y$ and $X'\to Y'$ in $F_*\mathcal C$. If $X$ and $X'$ are summable in $F_*\mathcal C$, hence in $\mathcal C$, then also the sum $F(X)+F(X')$ exists and it is equal to $F(X+X')$ as $F$ preserves sums. Arguing similarly for the target, we see that the sum $f+g$ exists in $\mathcal D$ and that it is a morphism $F(X+X')\to F(Y+Y')$, so that it defines a morphism $X+X'\to Y+Y'$ in $F_*\mathcal C$. This shows that the sum on morphisms is well-defined. Commutativity and associativity are then immediate from the corresponding statements for objects in $F_*\mathcal C$ and for morphisms in $\mathcal D$. Unitality follows analogously once we observe that $F(\id_0)=\id_0$ as $F(0)=0$.

It only remains to show that the sum is $E\mathcal M$-equivariant. It is again clear that the sum is $\mathcal M$-equivariant on objects, so it suffices to show that $u^X_\circ+ u^Y_\circ= u^{X+Y}_\circ$ as morphisms in $\mathcal D$ for all $u\in\mathcal M$ and all $X,Y\in F_*(\mathcal C)$ with $\supp(X)\cap\supp(Y)=\varnothing$. But the left hand side is defined as $F(u_\circ^X)+F(u_\circ^Y)$ while the right hand side is defined as $F(u^{X+Y}_\circ)$. As the sum in $\mathcal C$ is $E\mathcal M$-equivariant, the latter equals $F(u^X_\circ+u^Y_\circ)$; the claim therefore follows from $F$ preserving sums.
\end{proof}
\end{lemma}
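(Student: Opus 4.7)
The plan is to unpack the definition of a parsummable category and verify each axiom for $F_*\mathcal C$ directly from the corresponding axiom in $\mathcal C$ or $\mathcal D$, using sum-preservation of $F$ as the bridge between the two. Well-definedness as a small category is immediate, since composition and identities in $F_*\mathcal C$ are taken from $\mathcal D$ via the formula $\Hom_{F_*\mathcal C}(X,Y)=\Hom_{\mathcal D}(FX,FY)$.

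For the $E\mathcal M$-structure I would appeal to Lemma~\ref{lemma:EM-action-concise}: the $\mathcal M$-action on $\Ob(F_*\mathcal C)=\Ob(\mathcal C)$ is already given, and the candidate structure isomorphisms $F(u^X_\circ)\colon X\to u_*X$ satisfy the cocycle relation $(\ref{eq:u-circ-relation})$ because $F$ is a functor and the corresponding relation holds in $\mathcal C$. As a consequence, $\supp_{F_*\mathcal C}(X)=\supp_{\mathcal C}(X)$ for all $X$, so $F_*\mathcal C$ is tame and the additive unit $0$ still has empty support; in particular, the summability relation on objects in $F_*\mathcal C$ agrees with that in $\mathcal C$.

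Next I would turn to the sum operation. On objects the sum is taken from $\mathcal C$, so strict commutativity, associativity and unitality come for free. On morphisms, a sum $f+g$ is computed in $\mathcal D$, and the point requiring sum-preservation is that $f\colon FX\to FY$ and $g\colon FX'\to FY'$ with disjoint supports in $F_*\mathcal C$ give $f+g\colon F(X+X')\to F(Y+Y')$, which is exactly a morphism $X+X'\to Y+Y'$ of $F_*\mathcal C$ since $F(X+X')=FX+FX'$ and $F(Y+Y')=FY+FY'$. Functoriality, strict associativity, commutativity and unitality of $+$ on morphisms are then inherited from $\mathcal D$ (with unitality also using $F(\id_0)=\id_{F(0)}=\id_0$). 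Finally, $E\mathcal M$-equivariance $u^{X+Y}_\circ=u^X_\circ+u^Y_\circ$ reduces, after unwrapping the definitions, to $F(u^{X+Y}_\circ)=F(u^X_\circ)+F(u^Y_\circ)$, which follows by applying $F$ to the corresponding identity in $\mathcal C$ and then using sum-preservation.

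The argument is essentially bookkeeping, so there is no real obstacle, only a small bit of care. The one point worth flagging is the interplay between source/target in $F_*\mathcal C$ (living in $\mathcal C$) and actual morphisms (living in $\mathcal D$): every verification that an equation of morphisms holds in $F_*\mathcal C$ must be read as an equation in $\mathcal D$, and every compatibility of sums of morphisms with their prescribed sources and targets is secured precisely by the hypothesis that $F$ preserves sums. Once this convention is fixed, each axiom reduces to the corresponding one in $\mathcal C$ or $\mathcal D$ with no further input.
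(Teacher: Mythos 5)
Your proposal is correct and follows essentially the same route as the paper: establish the $E\mathcal M$-structure via Lemma~\ref{lemma:EM-action-concise} using $F(u^X_\circ)$ as the candidate structure isomorphism, observe that supports (hence tameness and the summability relation) are unchanged, verify the sum on morphisms via sum-preservation of $F$, and reduce $E\mathcal M$-equivariance of $+$ to the identity $F(u^{X+Y}_\circ)=F(u^X_\circ)+F(u^Y_\circ)$. Your concluding remark about tracking sources and targets in $F_*\mathcal C$ versus equations of morphisms in $\mathcal D$ is exactly the care the paper's proof also takes.
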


\begin{constr}\label{constr:F-lower-star-I}
In the situation of Construction~\ref{constr:F-lower-star} we define $I\colon\mathcal C\to F_*\mathcal C$ as follows: on objects, $I$ is given by the identity, and it sends a morphism $f\colon X\to Y$ in $\mathcal C$ to the morphism $X\to Y$ in $F_*\mathcal C$ given by the morphism $F(f)\colon F(X)\to F(Y)$ of $\mathcal D$.

We moreover define $\hat F\colon F_*\mathcal C\to\mathcal D$ as the functor that is given on objects by $F$ and on hom sets by the identity, i.e.~it sends a morphism $X\to Y$ given by a morphism $f\colon F(X)\to F(Y)$ in $\mathcal D$ to $f$.
\end{constr}

\begin{lemma}\label{lemma:precylinder}
\begin{enumerate}
\item $I$ is a morphism of parsummable categories.
\item If $F$ is essentially surjective, then the functor $\hat F$ is an equivalence of categories.
\item As ordinary functors, $F=\hat F\circ I$.
\end{enumerate}
\begin{proof}
For the first statement, we observe that $I$ is clearly $\mathcal M$-equivariant on objects. For the proof of $E\mathcal M$-equivariance it is therefore enough that $I(u^X_{\circ})=u^{I(X)}_\circ$ as morphisms in $\mathcal D$ for all $X\in\mathcal C$, $u\in\mathcal M$. However, both sides are defined as $F(u^X_\circ)$, finishing the proof of $E\mathcal M$-equivariance.

Similarly, $I$ clearly preserves the additive unit and sums of objects, and it is additive on morphisms as $F$ was assumed to preserve sums. This completes the proof of the first statement.

For the second statement we observe that $\hat F$ is always fully faithful by construction. As it is given on objects by $F$, it is clearly essentially surjective if $F$ is.

The final statement follows immediately from the definitions.
\end{proof}
\end{lemma}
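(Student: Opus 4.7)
The plan is to verify each item by unwinding the definitions of $I$ and $\hat F$ from Construction~\ref{constr:F-lower-star-I} against those of $F_*\mathcal C$ from Construction~\ref{constr:F-lower-star}. For part (3), I would simply observe that on objects both $\hat F \circ I$ and $F$ agree because $I$ is the identity on objects and $\hat F$ acts as $F$ on objects; on morphisms, $I$ sends $f$ to the arrow given in $\mathcal D$ by $F(f)$, while $\hat F$ is the identity on hom sets, so $(\hat F I)(f) = F(f)$.

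For part (1), I would first note that $I$ is $\mathcal M$-equivariant on objects because the $\mathcal M$-action on $\Ob(F_*\mathcal C)$ is defined to equal the one on $\Ob(\mathcal C)$. Using the criterion following Lemma~\ref{lemma:EM-action-concise}, strict $E\mathcal M$-equivariance then reduces to the equality $I(u^X_\circ)=u^{I(X)}_\circ$ in $F_*\mathcal C$, and here both sides are tautologically $F(u^X_\circ)$ by the definition of the structure isomorphisms in $F_*\mathcal C$. It remains to check that $I$ preserves $0$ and sums: on objects this is trivial since $0$ and $+$ in $F_*\mathcal C$ are the same as in $\mathcal C$; on morphisms, the sum in $F_*\mathcal C$ is by definition the sum in $\mathcal D$, so $I(f+g)=F(f+g)=F(f)+F(g)=I(f)+I(g)$, where the middle equality is precisely the sum-preservation hypothesis on $F$.

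For part (2), fully faithfulness of $\hat F$ is immediate from the construction, since $\hat F$ is declared to be the identity on $\Hom$-sets once we recall that $\Hom_{F_*\mathcal C}(X,Y)\mathrel{:=}\Hom_{\mathcal D}(FX,FY)$. Essential surjectivity then follows because $\hat F$ agrees with $F$ on objects, and $F$ was assumed essentially surjective. I expect no genuine obstacle here: the entire lemma is a definitional consistency check, and the only place where the sum-preservation hypothesis on $F$ is used in a non-trivial way is in showing that $I$ is additive on morphisms, which as above is immediate.
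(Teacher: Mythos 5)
Your proposal is correct and follows essentially the same route as the paper's proof: $\mathcal M$-equivariance of $I$ on objects is automatic, strict $E\mathcal M$-equivariance is checked via the criterion on structure isomorphisms (both sides being $F(u^X_\circ)$ by definition), additivity of $I$ on morphisms comes down to the sum-preservation hypothesis on $F$, and $\hat F$ is fully faithful by construction with essential surjectivity inherited from $F$.
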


On the level of underlying categories, the above in particular provides a factorization into a functor that is \emph{bijective on objects} followed by a fully faithful functor. Such factorizations in $\cat{Cat}$ have been studied classically and in particular in the related context of strictification results for algebras, see e.g.~\cite[Lemma~3.3]{power-coherence} or more recently \cite[Section~4.2]{gmmo}.

\begin{constr}
We write $\cat{PsArParSumCat}$ (`pseudo-arrows in parsummable categories') for the category whose objects are functors $F\colon\mathcal C\to\mathcal D$ that preserve sums, and whose morphisms are commutative diagrams
\begin{equation}\label{diag:PsArParSumCat-morphism}
\begin{tikzcd}
\mathcal C\arrow[r, "F"]\arrow[d, "G"'] &\mathcal D\arrow[d, "H"]\\
\mathcal C'\arrow[r, "F'"'] & \mathcal D'
\end{tikzcd}
\end{equation}
such that the vertical arrows are \emph{morphisms of parsummable categories}. The composition in $\cat{PsArParSumCat}$ is induced by the composition in $\cat{ParSumCat}$.

We now define a functor $(\blank)_*\colon\cat{PsArParSumCat}\to\cat{ParSumCat}$ as follows: an object $F\colon\mathcal C\to\mathcal D$ is sent to $F_*\mathcal C$, and a morphism given by a commutative diagram $(\ref{diag:PsArParSumCat-morphism})$ is sent to the functor $F_*(G,H)\colon F_*\mathcal C\to F_*\mathcal D$ given on objects by $G$ and on morphisms by $H$.
\end{constr}

\begin{lemma}\label{lemma:precylinder-functoriality}
This makes $(\blank)_*$ into a well-defined functor $\cat{PsArParSumCat}\to\cat{ParSumCat}$, and the maps $I$ from Construction~\ref{constr:F-lower-star-I} assemble into a natural transformation $\pr_1\Rightarrow (\blank)_*$, where $\pr_1$ denotes the projection sending a morphism $(\ref{diag:PsArParSumCat-morphism})$ in $\cat{PsArParSumCat}$ to $G\colon\mathcal C\to\mathcal C'$.

\begin{proof}
To see that $F_*(G,H)$ is well-defined, we observe that if $X,Y\in F_*\mathcal C$ and $f\colon F(X)\to F(Y)$ is a morphism in $\mathcal D$ defining a morphism $X\to Y$ in $F_*\mathcal C$, then $Hf$ is a morphism $HF(X)\to HF(Y)$ in $\mathcal D'$, i.e.~a morphism $F'G(X)\to F'G(Y)$ by commutativity of $(\ref{diag:PsArParSumCat-morphism})$. Thus it defines a morphism $G(X)\to G(Y)$ in $F'_*\mathcal C'$ as desired. It is then clear that $F_*(G,H)$ is a functor.

Since $G$ is $E\mathcal M$-equivariant, $F_*(G,H)$ preserves the $\mathcal M$-action on objects. To prove that it is $E\mathcal M$-equivariant, it is therefore enough that $F_*(G,H)(u_\circ^X)=u^{F_*(G,H)(X)}_\circ$ as morphisms in $\mathcal D'$ for all $X\in F_*\mathcal C$ and $u\in\mathcal M$. But the left hand side is defined as $HF(u^X_\circ)$ (where we now view $X$ as an object of $\mathcal C$) whereas the right hand side is defined as $F'(u^{G(X)}_\circ)$. As $G$ is $E\mathcal M$-equivariant, the latter agrees with $F'G(u^X_\circ)$, so the claim follows again from the commutativity of $(\ref{diag:PsArParSumCat-morphism})$.

Similarly, the compatibility of $F_*(G,H)$ with sums of objects follows from the fact that $G$ is a morphism of parsummable categories, while the compatibility with sums of morphisms follows from the fact that $H$ is a morphism of parsummable categories.

It is then clear that $(\blank)_*$ is a functor. Finally, the naturality statement is again clear on objects, and on morphisms it follows once more from the commutativity of $(\ref{diag:PsArParSumCat-morphism})$.
\end{proof}
\end{lemma}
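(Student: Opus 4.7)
The plan is to verify the two assertions of the lemma in sequence, with the work reducing to unwinding the definition of $F_*(\blank)$ and invoking the commutativity of $(\ref{diag:PsArParSumCat-morphism})$ together with the structural properties of the verticals $G$ and $H$.

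First I would check that $F_*(G,H)\colon F_*\mathcal C\to F'_*\mathcal C'$ is a well-defined functor of underlying categories: a morphism $X\to Y$ in $F_*\mathcal C$ is by definition a morphism $f\colon FX\to FY$ in $\mathcal D$, and applying $H$ yields $Hf\colon HFX\to HFY$, which by commutativity of the square becomes a morphism $F'GX\to F'GY$ in $\mathcal D'$, i.e.~a morphism $GX\to GY$ in $F'_*\mathcal C'$; functoriality is then inherited from that of $H$. Next I would promote $F_*(G,H)$ to a morphism of parsummable categories. On the level of $\mathcal M$-sets of objects, equivariance is immediate from $\mathcal M$-equivariance of $G$, and by the criterion from the corollary following Lemma~\ref{lemma:EM-action-concise} it then suffices to verify $F_*(G,H)(u_\circ^X)=u_\circ^{GX}$ as morphisms in $\mathcal D'$. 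Unfolding both sides, this becomes $HF(u_\circ^X)=F'(u_\circ^{GX})$, and the right hand side equals $F'G(u_\circ^X)$ by $E\mathcal M$-equivariance of $G$, matching the left by commutativity of $(\ref{diag:PsArParSumCat-morphism})$. Preservation of the additive unit and of sums of objects follows from $G$ being a morphism of parsummable categories, while additivity on hom-sets transports through $H$.

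With these verifications in hand, functoriality of $(\blank)_*$ itself is immediate from the componentwise definitions on objects and morphisms. The naturality statement $F_*(G,H)\circ I_F=I_{F'}\circ G$ is trivial on objects; on a morphism $f\colon X\to Y$ of $\mathcal C$, both sides describe the morphism $GX\to GY$ in $F'_*\mathcal C'$ recorded by $H(Ff)=F'(Gf)$, which is again an instance of the commutativity of the square. The only real pitfall, flagged in Warning~\ref{warn:Phi-hom-sets}, is keeping source and target straight, since morphisms of $F_*\mathcal C$ and $F'_*\mathcal C'$ are encoded by arrows of $\mathcal D$ and $\mathcal D'$ that do not themselves record their domain and codomain; beyond this bookkeeping, no genuine obstacle arises.
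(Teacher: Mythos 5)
Your proof is correct and follows essentially the same route as the paper's: well-definedness and functoriality of $F_*(G,H)$ by unwinding the definitions and invoking commutativity of the square, $E\mathcal M$-equivariance via the object-level check combined with the criterion $F_*(G,H)(u_\circ^X)=u_\circ^{GX}$, preservation of sums split into the object part (from $G$) and the morphism part (from $H$), and naturality of $I$ reduced to $H(Ff)=F'(Gf)$. The extra caution you flag about tracking source and target is a reasonable precaution but does not affect the argument.
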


\begin{constr}\label{constr:cyl}
Let $\mathcal C,\mathcal D$ be parsummable categories, and let $F\colon\mathcal C\to\mathcal D$ preserve sums. We define $\Cyl(F)$ as follows: we fix an injection $\mu\colon\bm2\times\omega\rightarrowtail\omega$, and we define $F_\mu$ as the composition
\begin{equation*}
\mathcal C\times\mathcal D\xrightarrow{F\times \id}\mathcal D\times\mathcal D\xrightarrow{\mu_*}\mathcal D.
\end{equation*}
We then set $\Cyl(F){:=} (F_\mu)_*(\mathcal C\times\mathcal D)$. Explicitly, this means that an object of $\Cyl(F)$ is a pair $(X,Y)$ with $X\in\mathcal C,Y\in\mathcal D$, and the hom sets are given by $\Hom_{\Cyl(F)}\big((X,Y),(X',Y')\big)\mathrel{:=}\Hom_{\mathcal D}\big(\mu_*(F(X),Y), \mu_*(F(X'),Y')\big)$.

We define $I^{(1)}\colon\mathcal C\to\Cyl(F)$ as the composition
\begin{equation*}
\mathcal C\xrightarrow{(\blank,0)}\mathcal C\times\mathcal D\xrightarrow{I} (F_\mu)_*(\mathcal C\times\mathcal D) =\Cyl(F),
\end{equation*}
and analogously we get $I^{(2)}\colon\mathcal D\to\Cyl(F)$.
\end{constr}

\begin{cor}\label{cor:cylinder-factorization}
\begin{enumerate}
\item $\Cyl(F)$ is a well-defined parsummable category, and $I^{(1)}$, $I^{(2)}$ are morphisms of parsummable categories.
\item $I^{(1)}\colon\mathcal C\to\Cyl(F)$ is an underlying equivalence if $F$ is.
\item $I^{(2)}\colon\mathcal D\to\Cyl(F)$ is an underlying equivalence.
\end{enumerate}
\begin{proof}
For the first statement it suffices by Lemma~\ref{lemma:precylinder} that $F_\mu$ preserves sums. For this we factor $F_\mu$ as
\begin{equation*}
\mathcal C\times\mathcal D\xrightarrow{(\mu(1,\blank)_*\circ F)\times\mu(2,\blank)_*} \mathcal D\boxtimes\mathcal D\xrightarrow{+}\mathcal D.
\end{equation*}
Clearly, compositions of sum-preserving functors again preserve sums. As $+\colon\mathcal D\boxtimes\mathcal D\to\mathcal D$ is even a morphism of parsummable categories, the claim therefore follows from Remark~\ref{rk:product-sum-preserving}.

For the remaining statements we contemplate the diagram
\begin{equation*}
\begin{tikzcd}
\mathcal C\arrow[r, "I^{(1)}"]\arrow[rdd, bend right=20pt, "{\mu(1,\blank)_*\circ F}"'] & \Cyl(F)\arrow[d, equal] & \arrow[l, "I^{(2)}"']\mathcal D\arrow[ldd, "{\mu(2,\blank)_*}", bend left=20pt]\\
& (F_\mu)_*(\mathcal C\times\mathcal D)\arrow[d, "\widehat{F_\mu}"]\\
& \mathcal D
\end{tikzcd}
\end{equation*}
which commutes by a straight-forward computation using that $\widehat{F_\mu}\circ I=F_\mu$. Since $F_\mu(0,Y)=\mu(2,\blank)_*(Y)\cong Y$ for any $Y\in\mathcal D$, $F_\mu$ is essentially surjective, so Lemma~\ref{lemma:precylinder} implies that $\widehat{F_\mu}$ is an underlying equivalence. As so are $\mu(1,\blank)_*$ and $\mu(2,\blank)_*$, the remaining claims follow immediately by $2$-out-of-$3$.
\end{proof}
\end{cor}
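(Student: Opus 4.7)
The plan is to reduce the first assertion to Lemma~\ref{lemma:precylinder} by verifying that $F_\mu$ preserves sums, and then to deduce the two equivalence statements via $2$-out-of-$3$ applied to the relation $\widehat{F_\mu}\circ I = F_\mu$ from the same lemma.

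First, I would show that $F_\mu$ preserves sums by factoring it as
\begin{equation*}
\mathcal C\times\mathcal D\xrightarrow{(\mu(1,\blank)_*\circ F)\times\mu(2,\blank)_*}\mathcal D\boxtimes\mathcal D\xrightarrow{+}\mathcal D.
\end{equation*}
The first arrow does land in $\mathcal D\boxtimes\mathcal D$ because $\mu(1,\blank)$ and $\mu(2,\blank)$ have disjoint images, so the supports of the two components are always disjoint by Lemma~\ref{lemma:support-change}-$(\ref{item:sc-action})$. Each of $\mu(i,\blank)_*$ preserves sums by Example~\ref{ex:action-sum-preserving}, and $F$ does so by hypothesis, so by Remark~\ref{rk:product-sum-preserving} (applied once we know the image is in $\boxtimes$) the first arrow preserves sums; the second arrow is even a morphism of parsummable categories. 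Hence $F_\mu$ preserves sums, and Lemma~\ref{lemma:precylinder} immediately yields that $\Cyl(F)=(F_\mu)_*(\mathcal C\times\mathcal D)$ is a parsummable category.

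Next, $I^{(1)}$ and $I^{(2)}$ are compositions of the morphism $I\colon\mathcal C\times\mathcal D\to\Cyl(F)$ of parsummable categories (Lemma~\ref{lemma:precylinder}) with the evident inclusions $(\blank,0)\colon\mathcal C\to\mathcal C\times\mathcal D$ and $(0,\blank)\colon\mathcal D\to\mathcal C\times\mathcal D$; the latter are morphisms of parsummable categories since $0$ has empty support and is the additive unit, so both $I^{(1)}$ and $I^{(2)}$ are morphisms of parsummable categories. For the equivalence statements, I would use that $F_\mu(0,Y)=\mu(2,\blank)_*(Y)$ is isomorphic to $Y$ in $\mathcal D$ via the structure isomorphism; hence $F_\mu$ is essentially surjective, and Lemma~\ref{lemma:precylinder} gives that $\widehat{F_\mu}$ is an underlying equivalence. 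The identity $\widehat{F_\mu}\circ I=F_\mu$ specialises to
\begin{equation*}
\widehat{F_\mu}\circ I^{(1)}=\mu(1,\blank)_*\circ F\qquad\text{and}\qquad\widehat{F_\mu}\circ I^{(2)}=\mu(2,\blank)_*.
\end{equation*}
Since each $\mu(i,\blank)_*$ is an equivalence of underlying categories (with quasi-inverse witnessed by the natural isomorphisms $[1,\mu(i,\blank)]$), $2$-out-of-$3$ gives that $I^{(2)}$ is always an underlying equivalence, and that $I^{(1)}$ is one as soon as $F$ is.

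The main obstacle is the sum-preservation of $F_\mu$: na\"ively the product $F\times\id$ need not preserve sums (as Remark~\ref{rk:product-sum-preserving} warns), and the crucial step is observing that post-composing with the \emph{disjoint} actions $\mu(1,\blank)_*,\mu(2,\blank)_*$ forces all supports on the two components apart, so the image factors through $\boxtimes$ and the product issue vanishes. Everything else is then a formal consequence of the infrastructure built up in Lemmas~\ref{lemma:precylinder} and~\ref{lemma:precylinder-functoriality}.
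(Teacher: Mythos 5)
Your proof is correct and follows essentially the same route as the paper: factor $F_\mu$ through $\mathcal D\boxtimes\mathcal D$ to establish that it preserves sums, then use the identities $\widehat{F_\mu}\circ I^{(1)}=\mu(1,\blank)_*\circ F$ and $\widehat{F_\mu}\circ I^{(2)}=\mu(2,\blank)_*$ together with essential surjectivity of $F_\mu$ and $2$-out-of-$3$. The only difference is that you spell out a bit more explicitly why $I^{(1)}$, $I^{(2)}$ are morphisms of parsummable categories (as composites of $I$ with $(\blank,0)$, $(0,\blank)$), which the paper leaves implicit.
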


Using that $\mu(1,\blank)_*$ and $\mu(2,\blank)_*$ are isomorphic (as ordinary functors) to the identity of $\mathcal D$, we can summarize the above situation as follows: any sum-preserving $F\colon\mathcal C\to\mathcal D$ admits a factoriation
\begin{equation}\label{eq:cylinder-factorization}
\begin{tikzcd}
\mathcal C\arrow[r, "I^{(1)}"] & \Cyl(F) \arrow[r, "\widehat{F_\mu}", ""{name=A, below, very near start, xshift=-6pt},""{name=B, below, very near end, xshift=6pt}] & \arrow[from=B, to=A, yshift=-2.5pt, bend left=17.5pt, dashed, "I^{(2)}"] \mathcal D
\end{tikzcd}
\end{equation}
up to natural isomorphism of ordinary functors, such that $I^{(1)}$ is a morphism of parsummable categories and $\widehat{F_\mu}$ is an equivalence of ordinary categories quasi-inverse to the morphism  $I^{(2)}\colon\mathcal D\to\Cyl(F)$ of parsummable categories.

We close this discussion by establishing in which sense the factorization $(\ref{eq:cylinder-factorization})$ is functorial:

\begin{constr}
We extend Construction~\ref{constr:cyl} to a functor
\begin{equation*}
\Cyl\colon\cat{PsArParSumCat}\to\cat{ParSumCat}
\end{equation*}
as follows: a commutative diagram as in $(\ref{diag:PsArParSumCat-morphism})$ is sent to $(F_\mu)_*(G\times H,H)$.
\end{constr}

\begin{cor}\label{cor:cylinder-natural}
The above is a well-defined functor. Moreover, the maps $I^{(1)}$ and $I^{(2)}$ assemble into natural transformations $\pr_1\Rightarrow\Cyl\Leftarrow\pr_2$.
\begin{proof}
To see that $\Cyl$ is a well-defined functor, it suffices by Lemma~\ref{lemma:precylinder-functoriality} that the total rectangle in
\begin{equation*}
\begin{tikzcd}
\mathcal C\times\mathcal D\arrow[d,"G\times H"']\arrow[r, "F\times\id"] & \mathcal D\times\mathcal D\arrow[d, "H\times H"]\arrow[r, "\mu_*"] & \mathcal D\arrow[d, "H"]\\
\mathcal C'\times\mathcal D'\arrow[r, "F'\times\id"'] & \mathcal D'\times\mathcal D'\arrow[r, "\mu_*"'] & \mathcal D'
\end{tikzcd}
\end{equation*}
commutes for any commutative square $(\ref{diag:PsArParSumCat-morphism})$. But indeed, for the left hand square this holds by assumption, while for the right hand square this follows from the assumption that $H$ be a morphism of parsummable categories.

The naturality of $I^{(1)},I^{(2)}$ follows from the same lemma once we observe that the inclusions $\mathcal C\hookrightarrow\mathcal C\times\mathcal D\hookleftarrow\mathcal D$ are clearly natural.
\end{proof}
\end{cor}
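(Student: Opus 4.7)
The plan is to reduce both assertions to the functoriality statement of Lemma~\ref{lemma:precylinder-functoriality} applied to the construction $(\blank)_*$. Concretely, given a morphism in $\cat{PsArParSumCat}$ as in $(\ref{diag:PsArParSumCat-morphism})$, I would first exhibit $(G\times H, H)$ as a morphism in $\cat{PsArParSumCat}$ from $F_\mu\colon\mathcal C\times\mathcal D\to\mathcal D$ to $F'_\mu\colon\mathcal C'\times\mathcal D'\to\mathcal D'$, and then invoke $(\blank)_*$ on this to get $\Cyl(G,H)=(F_\mu)_*(G\times H,H)$. Functoriality of $\Cyl$ would then be inherited directly from functoriality of $(\blank)_*$.

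To verify that $(G\times H, H)$ really is a morphism in $\cat{PsArParSumCat}$, I would check two things. First, $G\times H$ must be a morphism of parsummable categories; this follows from the fact (Remark~\ref{rk:product-sum-preserving}) that $\cat{ParSumCat}$ has products created in $\cat{Cat}$, so morphisms into a product $\mathcal C'\times\mathcal D'$ are precisely pairs of morphisms of parsummable categories. Second, I would need the square with vertical arrows $G\times H,H$ and horizontal arrows $F_\mu,F'_\mu$ to commute. Decomposing $F_\mu=\mu_*\circ(F\times\id)$ and similarly for $F'_\mu$, this splits into two sub-squares: the left one commutes because the original diagram $(\ref{diag:PsArParSumCat-morphism})$ commutes, and the right one, $H\circ\mu_*=\mu_*\circ(H\times H)$, commutes because $H$ is a morphism of parsummable categories and $\mu_*$ is assembled out of the $E\mathcal M$-action and the sum, both of which are strictly preserved by $H$ (this is the content of Remark~\ref{rk:universally-natural-trafo} or follows directly from the construction).

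For the naturality of $I^{(1)}$ and $I^{(2)}$, I would again appeal to Lemma~\ref{lemma:precylinder-functoriality}, which already supplies a natural transformation $\pr_1\Rightarrow(\blank)_*$ via the maps $I$ from Construction~\ref{constr:F-lower-star-I}. Since $I^{(1)}$ and $I^{(2)}$ are defined by precomposing this $I$ with the inclusions $\mathcal C\hookrightarrow\mathcal C\times\mathcal D\hookleftarrow\mathcal D$ (sending $X\mapsto(X,0)$ and $Y\mapsto(0,Y)$), and since these inclusions are strictly natural in morphisms of $\cat{PsArParSumCat}$ (their source is $\pr_1$ respectively $\pr_2$ applied to a morphism), the desired naturality squares reduce to the ones provided by the lemma.

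The only step that requires genuine care is the commutativity of the right sub-square $H\circ\mu_*=\mu_*\circ(H\times H)$; once this is in place everything else is formal. I expect this to be the main (though mild) obstacle, because it is the one point where we use the full strength of the assumption that $H$—not merely the functor between underlying categories—is a morphism of parsummable categories, rather than merely a sum-preserving functor.
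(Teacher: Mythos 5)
Your proposal is correct and takes essentially the same approach as the paper: reduce to Lemma~\ref{lemma:precylinder-functoriality} by decomposing $F_\mu$ and $F'_\mu$ into their two factors, with the right sub-square commuting because $H$ is a morphism of parsummable categories, and derive naturality of $I^{(1)}, I^{(2)}$ from the lemma's natural transformation $\pr_1\Rightarrow(\blank)_*$ composed with the (clearly natural) inclusions. You merely spell out a few steps (the product structure on morphisms, the two-sub-square decomposition) a bit more explicitly than the paper does.
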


\subsection{From \texorpdfstring{$\bm{\Phi\Sigma}$}{Phi Sigma} to the identity}
Using the general procedure developed in the previous subsection, we will now construct the desired zig-zag between $\Phi\Sigma$ and the identity of $\cat{ParSumCat}$. While we could indeed extend the assignment from the beginning of this section to a natural sum-preserving functor $\Phi\Sigma(\mathcal C)\to\mathcal C$ for any parsummable category $\mathcal C$ and use this to produce our zig-zag, the calculations involved become simpler if we restrict to a certain subcategory of $\Phi\Sigma(\mathcal C)$ first:

\begin{constr}
Given a parsummable category $\mathcal C$, we write $\Theta(\mathcal C)$ for the full subcategory of $\Phi\Sigma(\mathcal C)$ spanned by those sequences $(X_1^\bullet,X_2^\bullet,\dots)$ such that each $X_i^\bullet$ is either a $0$-tuple (i.e.~$X_i^\bullet=\epsilon$) or a $1$-tuple.
\end{constr}

\begin{lemma}
The subcategory $\Theta(\mathcal C)\subset\Phi\Sigma(\mathcal C)$ is closed under the $E\mathcal M$-action and the sum operation, so that it becomes a parsummable category in its own right. Moreover, $\Theta$ defines a subfunctor of $\Phi\Sigma\colon\cat{ParSumCat}\to\cat{ParSumCat}$.
\begin{proof}
As $\Theta(\mathcal C)$ is by definition a full subcategory, it suffices for the first statement that its objects are closed under the $\mathcal M$-action and under the sum of $\Phi\Sigma(\mathcal C)$, and that they contain the additive unit. However, all of these claims follow immediately from the definition of $\Phi$.

For the second statement it suffices to observe that on objects $\Phi\Sigma(F)$ is given for any $F\colon\mathcal C\to\mathcal D$ by applying $F$ componentwise, so it clearly sends $\Theta(\mathcal C)$ to $\Theta(\mathcal D)$.
\end{proof}
\end{lemma}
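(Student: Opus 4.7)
The plan is to verify the three closure properties that make $\Theta(\mathcal C)$ inherit the parsummable structure from $\Phi\Sigma(\mathcal C)$: closure under the $E\mathcal M$-action, closure under addition, and containing the zero object. Since $\Theta(\mathcal C)$ is by definition a full subcategory, it is automatic that the equivariance, tameness, strict associativity/commutativity/unitality, functoriality and $E\mathcal M$-equivariance of the sum all descend from $\Phi\Sigma(\mathcal C)$.

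For closure under the $E\mathcal M$-action, I would use the explicit formula from the construction of $\Phi$: for $X^\bullet = (X_1^\bullet,X_2^\bullet,\dots) \in \Phi\Sigma(\mathcal C)$ and $u\in\mathcal M$, one has $u_*(X^\bullet)_i = X_j^\bullet$ when $i=u(j)$ and $u_*(X^\bullet)_i = \epsilon$ otherwise. If $X^\bullet\in\Theta(\mathcal C)$, then each $X_j^\bullet$ is either $\epsilon$ or a $1$-tuple, and $\epsilon$ is certainly a $0$-tuple; hence every entry of $u_*(X^\bullet)$ is a $0$- or $1$-tuple, so $u_*(X^\bullet)\in\Theta(\mathcal C)$.

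For closure under $+$, I would again invoke the explicit componentwise description of sums in $\Phi(\mathscr{C})$: if $X^\bullet,Y^\bullet$ are disjointly supported objects of $\Phi\Sigma(\mathcal C)$, then $(X^\bullet + Y^\bullet)_i$ equals $X_i^\bullet$ or $Y_i^\bullet$. If both summands lie in $\Theta(\mathcal C)$, then every entry of $X^\bullet + Y^\bullet$ is visibly a $0$- or $1$-tuple. The additive unit is the constant sequence at $\epsilon$, which lies in $\Theta(\mathcal C)$ trivially.

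For the subfunctor property, the key observation is that for any morphism $F\colon\mathcal C\to\mathcal D$ of parsummable categories, $\Sigma(F)$ preserves the length of tuples by construction, sending an $m$-tuple $(X_1,\dots,X_m)$ to $(FX_1,\dots,FX_m)$. Since $\Phi\Sigma(F)$ acts on objects by applying $\Sigma(F)$ componentwise, it sends sequences whose components are all $0$- or $1$-tuples to sequences of the same form, and hence restricts to a functor $\Theta(\mathcal C)\to\Theta(\mathcal D)$. There is no real obstacle here; the whole lemma is essentially a bookkeeping check, made easy by the fact that closure under all relevant structure can be read off componentwise from the explicit formulas for $\Phi$.
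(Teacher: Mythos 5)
Your proof is correct and follows essentially the same route as the paper's: reduce the first claim to closure of objects under the $\mathcal M$-action, the sum, and containment of the unit (valid because $\Theta(\mathcal C)$ is a full subcategory), and read these off from the explicit formulas in the construction of $\Phi$; then observe that $\Phi\Sigma(F)$ acts componentwise and hence preserves the defining condition. You spell out the componentwise computations that the paper compresses into "follow immediately from the definition of $\Phi$," but the argument is identical in substance.
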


\begin{lemma}\label{lemma:Theta-vs-Phi-Sigma}
The inclusions assemble into a natural levelwise underlying equivalence $\Theta\Rightarrow\Phi\Sigma$.
\begin{proof}
It only remains to show that $\Theta(\mathcal C)\hookrightarrow\Phi\Sigma(\mathcal C)$ is essentially surjective for any parsummable category $\mathcal C$. But indeed, a general object of $\Phi\Sigma(\mathcal C)$ is a sequence $(X_1^\bullet, X_2^\bullet,\dots)$ of tuples in $\mathcal C$ such that almost all $X_i^\bullet$ equal $\epsilon$; we construct an object $(Y_1^\bullet, Y_2^\bullet,\dots)$ of $\Theta\mathcal C$ together with isomorphisms $\alpha_i\colon X_i^\bullet\to Y_i^\bullet$ in $\Sigma\mathcal C$ as follows: if $X_i^\bullet=\epsilon$, then $Y_i^\bullet\mathrel{:=}\epsilon$ and $\alpha_i\mathrel{:=}\id_\epsilon$; otherwise, if $X_i^\bullet=(X_i^{(1)},\dots, X_i^{(m_i)})$, then we can (as in the proof of Proposition~\ref{prop:comparison-schwede}) pick an injection $\phi\colon\bm{m_i}\times\omega\rightarrowtail\omega$ and we set $Y_i^\bullet\mathrel{:=}(\phi_*(X_i^\bullet))$ (a $1$-tuple) and $\alpha_i\mathrel{:=}[1,\id_{Y_i},\phi]:X_i^\bullet\to Y_i^\bullet$ (which is indeed an isomorphism by Remark~\ref{rk:middle-term-identity}). Then $\bigotimes_{i\in\omega}\alpha_i\colon\bigotimes_{i\in\omega} X_i^\bullet\to\bigotimes_{i\in\omega} Y_i^\bullet$ is a well-defined isomorphism in $\Sigma(\mathcal C)$ as almost all $\alpha_i$ are the identity of the tensor unit $\epsilon$, so it defines an isomorphism $X^\bullet_\bullet\cong Y^\bullet_\bullet$ in $\Phi\Sigma(\mathcal C)$ as desired.
\end{proof}
\end{lemma}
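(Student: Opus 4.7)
The inclusion $\Theta(\mathcal C)\hookrightarrow\Phi\Sigma(\mathcal C)$ is fully faithful by definition (it is a full subcategory), and naturality in $F\colon\mathcal C\to\mathcal D$ is immediate from the fact that $\Phi\Sigma(F)$ acts componentwise on sequences of tuples and therefore sends a sequence of $0$- and $1$-tuples to a sequence of $0$- and $1$-tuples. Thus the only thing to verify is that every object of $\Phi\Sigma(\mathcal C)$ is isomorphic in $\Phi\Sigma(\mathcal C)$ to an object of $\Theta(\mathcal C)$.

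Given a general object $X^\bullet_\bullet=(X_1^\bullet,X_2^\bullet,\dots)$ of $\Phi\Sigma(\mathcal C)$, the plan is to construct, componentwise, a sequence $Y^\bullet_\bullet$ in $\Theta(\mathcal C)$ together with isomorphisms $\alpha_i\colon X_i^\bullet\to Y_i^\bullet$ in $\Sigma(\mathcal C)$. For indices $i$ with $X_i^\bullet=\epsilon$, one simply puts $Y_i^\bullet\mathrel{:=}\epsilon$ and $\alpha_i\mathrel{:=}\id_\epsilon$. For an index $i$ with $X_i^\bullet=(X_i^{(1)},\dots,X_i^{(m_i)})$ and $m_i\ge 1$, the idea is to collapse this tuple into a single object of $\mathcal C$ by choosing any injection $\phi_i\colon\bm{m_i}\times\omega\rightarrowtail\omega$, setting $Y_i^\bullet\mathrel{:=}\bigl((\phi_i)_*(X_i^\bullet)\bigr)$ (a $1$-tuple), and taking $\alpha_i\mathrel{:=}[1,\id,\phi_i]$; that this is an isomorphism in $\Sigma(\mathcal C)$ is exactly Remark~\ref{rk:middle-term-identity}.

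Finally, one has to promote the componentwise data $(\alpha_i)_{i\in\omega}$ to an honest morphism in $\Phi\Sigma(\mathcal C)$, i.e.~a morphism $\bigotimes_{i\in\omega}X_i^\bullet\to\bigotimes_{i\in\omega}Y_i^\bullet$ in $\Sigma(\mathcal C)$. Since $X_i^\bullet=\epsilon=Y_i^\bullet$ for almost all $i$, for all such $i$ the corresponding $\alpha_i$ is the identity of the tensor unit $\epsilon$, so the infinite tensor product $\bigotimes_{i\in\omega}\alpha_i$ collapses to a finite tensor product of the nontrivial $\alpha_i$'s and is a well-defined isomorphism in the permutative category $\Sigma(\mathcal C)$ between the finite tensor products that, by strict unitality, compute $\bigotimes_{i\in\omega}X_i^\bullet$ and $\bigotimes_{i\in\omega}Y_i^\bullet$. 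This produces the required isomorphism $X^\bullet_\bullet\cong Y^\bullet_\bullet$ in $\Phi\Sigma(\mathcal C)$, completing essential surjectivity.

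There is essentially no obstacle: the argument is bookkeeping, with the only subtle point being the observation that tensoring the $\alpha_i$'s in $\Sigma(\mathcal C)$ really does yield a morphism in $\Phi\Sigma(\mathcal C)$, which comes down to the finitely-many-nontrivial-entries condition built into $\Phi$ together with strict unitality of $\otimes$ in $\Sigma(\mathcal C)$.
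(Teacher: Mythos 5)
Your proof is correct and follows essentially the same route as the paper: the same componentwise collapse of each nonempty tuple to a $1$-tuple via a chosen injection, the same appeal to Remark~\ref{rk:middle-term-identity} for invertibility, and the same observation that because almost all $\alpha_i$ are $\id_\epsilon$ the infinite tensor product is a well-defined isomorphism in $\Sigma(\mathcal C)$, hence an isomorphism in $\Phi\Sigma(\mathcal C)$.
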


\begin{rk}\label{rk:theta-notation}
If $A\subset\omega$ is any finite set, and $(X_a)_{a\in A}$ an $A$-indexed family of objects of a parsummable category $\mathcal C$, then we write $\langle A,X_\bullet\rangle\in\Theta(\mathcal C)$ for the object whose $i$-th entry is $(X_i)\in\Sigma(\mathcal C)$ if $i\in A$, and the tensor unit $\epsilon$ otherwise. Clearly, every object of $\Theta(\mathcal C)$ is of the form $\langle A,X_\bullet\rangle$ for some finite $A\subset\omega$ and some $A$-indexed family in $\mathcal C$; moreover, this representation is unique.

One immediately checks from the definitions that $\supp\langle A,X_\bullet\rangle=A$ and that for any finite $B\subset\omega$ with $A\cap B=\varnothing$ the sum $\langle A,X_\bullet\rangle + \langle B,Y_\bullet\rangle$ is given by $\langle A\cup B, X_\bullet+Y_\bullet\rangle$, where $X_\bullet+Y_\bullet$ is the $(A\cup B)$-indexed family with
\begin{equation*}
(X_\bullet+Y_\bullet)_i=\begin{cases}
X_i & \text{if }i\in A\\
Y_i & \text{if }i\in B.
\end{cases}
\end{equation*}
Finally, $\bigotimes_{i\in\omega}\langle A,X_\bullet\rangle_i=\kappa_A^*X_\bullet=(X_{\kappa_A(1)},\dots,X_{\kappa_A(|A|)})$ where $\kappa_A\colon\{1,\dots,|A|\}\to A$ is the unique order-preserving bijection.
\end{rk}

\begin{constr}
Fix an injection $\phi\colon\omega\times\omega\rightarrowtail\omega$.
We define for any parsummable category $\mathcal C$ a functor $S_{\mathcal C}\colon\Theta(\mathcal C)\to\mathcal C$ on objects via $S_{\mathcal C}\langle A,X_\bullet\rangle=(\phi|_A)_*(X_\bullet)$. If $\langle B,Y_\bullet\rangle$ is another object, then a morphism $\alpha\colon\langle A,X_\bullet\rangle\to\langle B,Y_\bullet\rangle$ in $\Theta(\mathcal C)\subset\Phi\Sigma(\mathcal C)$ is by definition the same as a morphism $\kappa_A^*X_\bullet\to\kappa_B^*Y_\bullet$ in $\Sigma(\mathcal C)$. By Lemma~\ref{lemma:free-to-choose-phi-psi} any such morphism admits a unique representative of the form $\big(\phi|_B\circ(\kappa_B\times\id),f,\phi|_A\circ(\kappa_A\times\id)\big)$ for some $f\colon\big(\phi|_A\circ(\kappa_A\times\id)\big)_*(\kappa_A^*X_\bullet)\to \big(\phi|_B\circ(\kappa_B\times\id)\big)_*(\kappa_B^*Y_\bullet)$ and we define $S_{\mathcal C}(\alpha)=f$.
\end{constr}

\begin{prop}
The above is a well-defined and sum-preserving functor.
\begin{proof}
Let us first show that $S_{\mathcal C}$ is well-defined on morphisms, i.e.~in the above situation $f$ is actually a morphism $S_{\mathcal C}\langle A,X_\bullet\rangle\to S_{\mathcal C}\langle B,Y_\bullet\rangle$. But indeed, by Lemma~\ref{lemma:generalized-action}-$(\ref{item:lga-perm})$, $\big(\phi|_A\circ(\kappa_A\times\id)\big)_*(\kappa_A^*X_\bullet)=(\phi|_A)_*(X_\bullet)=S_{\mathcal C}\langle A,X_\bullet\rangle$ and similarly for the target. Thus, $S_{\mathcal C}$ is well-defined, and it is then clearly a functor.

It remains to show that $S_{\mathcal C}$ preserves sums. By definition $S_{\mathcal C}\langle\varnothing,\varnothing\rangle=0$ (where $\varnothing$ denotes both the empty set as well as the empty family), i.e.~$S_{\mathcal C}$ preserves the additive unit. Moreover, Lemma~\ref{lemma:generalized-action}-$(\ref{item:lga-partition})$ shows that $S_{\mathcal C}$ sends disjointly supported objects to disjointly supported objects and that it preserves sums of objects.

It only remains to show that $S_{\mathcal C}$ preserves sums of morphisms, for which we let $\alpha\colon\langle A,X_\bullet\rangle\to\langle B,Y_\bullet\rangle$, $\beta\colon\langle A',X'_\bullet\rangle\to\langle B',Y'_\bullet\rangle$ be any morphisms such that $A\cap A'=\varnothing$ and $B\cap B'=\varnothing$.

By definition of $\Phi$, the sum $\alpha+\beta$ in $\Theta(\mathcal C)\subset\Phi\Sigma(\mathcal C)$ is the composition
\begin{equation*}
\bigotimes_{i\in A\cup A'} (X_\bullet+X'_\bullet)_i\to
\bigotimes_{i\in A} X_i \otimes \bigotimes_{i\in A'} X'_i\xrightarrow{\alpha\otimes\beta}
\bigotimes_{i\in B} Y_i \otimes \bigotimes_{i\in B'} Y'_i
\to
\bigotimes_{i\in B\cup B'} (Y_\bullet+Y'_\bullet)_i
\end{equation*}
where the outer two arrows are the coherence isomorphisms associated to the tautological bijections $A\amalg A'\to A\cup A'$, $B\cup B'\to B\amalg B'$; here we again make the disjoint unions into totally ordered sets by demanding that any element of the first summand be smaller than any element of the second summand.

If we write $\alpha=[\phi|_B\circ(\kappa_B\times\id),f,\phi|_A\circ(\kappa_A\times\id)]$ and $\beta=[\phi|_{B'}\circ(\kappa_{B'}\times\id),g,\phi|_{A'}\circ(\kappa_{A'}\times\id)]$, then
\begin{equation*}
\alpha\otimes\beta=\big[\phi|_{B\cup B'}\circ\big((\kappa_{B}+\kappa_{B'})\times\id\big), f+g, \phi|_{A\cup A'}\circ\big((\kappa_{A}+\kappa_{A'})\times\id\big)\big]
\end{equation*}
by definition of the tensor product in $\Sigma(\mathcal C)$, where $\kappa_{A}+\kappa_{A'}\colon \{1,\dots,|A|+|A'|\}\to A\cup A'$ is defined by $(\kappa_A+\kappa_{A'})(i)=\kappa_A(i)$ for $i\le|A|$ and $(\kappa_A+\kappa_{A'})(i)=\kappa_{A'}(i-|A|)$ otherwise, and $\kappa_B+\kappa_{B'}$ is defined analogously. After postcomposing with the tautological bijection $A\cup A'\cong A\amalg A'$, $\kappa_{A}+\kappa_{A'}$ becomes the unique order-preserving bijection $\{1,\dots,|A|+|A'|\}\to A\amalg A'$. We therefore conclude from the construction of coherence isomorphisms associated to bijections between finite totally ordered sets (see the discussion after Remark~\ref{rk:rk-before-coherence-isomorphisms}) that the above coherence isomorphism $\bigotimes_{i\in A\cup A'}(X_\bullet+X_\bullet')_i\to \bigotimes_{i\in A}X_i\otimes \bigotimes_{i \in A'} X_i'$ agrees with the coherence isomorphism associated to the unique permutation $\sigma\in\Sigma_{|A|+|A'|}$ such that $(\kappa_{A}+\kappa_{A'})\circ\sigma=\kappa_{A\cup A'}$. Lemma~\ref{lemma:sigma-coherence} applied to the injection $\phi|_{A\cup A'}\circ((\kappa_A+\kappa_{A'})\times\id)$ then shows that this coherence isomorphism is given by
\begin{align*}
&[\phi|_{A\cup A'}\circ((\kappa_A+\kappa_{A'})\times\id),\id,\phi|_{A\cup A'}\circ((\kappa_A+\kappa_{A'})\times\id)\circ(\sigma\times\id)]\\
&\qquad=
[\phi|_{A\cup A'}\circ((\kappa_A+\kappa_{A'})\times\id),\id,\phi|_{A\cup A'}\circ(\kappa_{A\cup A'}\times\id)].
\end{align*}
Similarly, the remaining coherence isomorphism is given by
\begin{equation*}
[\phi|_{B\cup B'}\circ(\kappa_{B\cup B'}\times\id),\id,\phi|_{B\cup B'}\circ((\kappa_{B}+\kappa_{B'})\times\id)].
\end{equation*}
Plugging this in, we get $\alpha+\beta=[\phi|_{B\cup B'}\circ(\kappa_{B\cup B'}\times\id),f+g,\phi|_{A\cup A'}\circ(\kappa_{A\cup A'}\times\id)]$, hence $S_{\mathcal C}(\alpha+\beta)=f+g=S_{\mathcal C}(\alpha)+S_{\mathcal C}(\beta)$ as desired.
\end{proof}
\end{prop}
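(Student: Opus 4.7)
The plan is to verify three things in turn: that $S_{\mathcal C}$ is well-defined on morphisms, that it is functorial, and that it preserves sums. The first two will be almost formal consequences of Lemma~\ref{lemma:free-to-choose-phi-psi} and the construction, while the sum-preservation for morphisms will require a careful identification of certain coherence isomorphisms in $\Sigma(\mathcal C)$.

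For well-definedness, I would appeal to Lemma~\ref{lemma:generalized-action}-$(\ref{item:lga-perm})$: applied with the bijection $\kappa_A\colon\{1,\ldots,|A|\}\to A$, it gives $\big(\phi|_A\circ(\kappa_A\times\id)\big)_*(\kappa_A^*X_\bullet)=(\phi|_A)_*(X_\bullet)=S_{\mathcal C}\langle A,X_\bullet\rangle$ (and similarly for $B$), so the morphism $f$ appearing in the representative produced by Lemma~\ref{lemma:free-to-choose-phi-psi} really does have source $S_{\mathcal C}\langle A,X_\bullet\rangle$ and target $S_{\mathcal C}\langle B,Y_\bullet\rangle$. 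Functoriality is then essentially formal: by the same normalisation lemma I can choose matched representatives for two composable morphisms, unpack the composition law in $\Sigma(\mathcal C)$, and read off that the underlying $f$'s in $\mathcal C$ compose; the identity case is handled analogously.

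The two clauses of sum preservation involving objects are immediate. By construction $S_{\mathcal C}\langle\varnothing,\varnothing\rangle=(\phi|_\varnothing)_*(\varnothing)=0$, and for disjointly supported $\langle A,X_\bullet\rangle$, $\langle A',X'_\bullet\rangle$, Remark~\ref{rk:theta-notation} identifies their sum in $\Theta(\mathcal C)$ with $\langle A\cup A',X_\bullet+X'_\bullet\rangle$, which $S_{\mathcal C}$ sends to $(\phi|_{A\cup A'})_*(X_\bullet+X'_\bullet)$. Lemma~\ref{lemma:generalized-action}-$(\ref{item:lga-partition})$ applied to the partition $A\cup A'=A\sqcup A'$ then simultaneously verifies summability of $S_{\mathcal C}\langle A,X_\bullet\rangle$ and $S_{\mathcal C}\langle A',X'_\bullet\rangle$ and identifies the sum with $S_{\mathcal C}$ of the above.

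The main obstacle will be preservation of sums of morphisms. Given $\alpha$, $\beta$ as in the proposition, the sum $\alpha+\beta$ in $\Phi\Sigma(\mathcal C)$ is by construction the composition of $\alpha\otimes\beta$ with two coherence isomorphisms in $\Sigma(\mathcal C)$ that account for the discrepancy between the ordering of $A\cup A'$ as a subset of $\omega$ and the ordered concatenation of $A$ followed by $A'$ (and analogously for $B,B'$). I would first compute $\alpha\otimes\beta$ from normalised representatives, so that the injection arising on both sides is $\phi|_{A\cup A'}\circ((\kappa_A+\kappa_{A'})\times\id)$ for a suitable concatenation bijection $\kappa_A+\kappa_{A'}$, and then invoke Lemma~\ref{lemma:sigma-coherence} to rewrite each coherence isomorphism explicitly as $[\phi|_{A\cup A'}\circ(\kappa_{A\cup A'}\times\id),\id,\phi|_{A\cup A'}\circ((\kappa_A+\kappa_{A'})\times\id)]$ (and its $B$-analogue), using the unique shuffle permutation that relates $\kappa_{A\cup A'}$ to $\kappa_A+\kappa_{A'}$. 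The delicate point is that after composing all three pieces in $\Sigma(\mathcal C)$, the first and third injections end up being exactly the canonical ones demanded by the definition of $S_{\mathcal C}$, while the middle term reduces to $f+g$; this gives $S_{\mathcal C}(\alpha+\beta)=f+g=S_{\mathcal C}(\alpha)+S_{\mathcal C}(\beta)$.
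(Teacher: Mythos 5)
Your plan matches the paper's own argument step for step: Lemma~\ref{lemma:generalized-action}-$(\ref{item:lga-perm})$ for well-definedness, Remark~\ref{rk:theta-notation} and Lemma~\ref{lemma:generalized-action}-$(\ref{item:lga-partition})$ for sums of objects, and then expressing the two coherence isomorphisms in $\Sigma(\mathcal C)$ via Lemma~\ref{lemma:sigma-coherence} (with the shuffle permutation relating $\kappa_{A\cup A'}$ to $\kappa_A+\kappa_{A'}$) so that everything composes to $[\phi|_{B\cup B'}\circ(\kappa_{B\cup B'}\times\id),f+g,\phi|_{A\cup A'}\circ(\kappa_{A\cup A'}\times\id)]$. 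No gaps and no meaningful divergence from the paper's route.
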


\begin{lemma}
For any parsummable category $\mathcal C$, the functor $S_{\mathcal C}$ is an equivalence of categories. For varying $\mathcal C$, these assemble into a natural transformation ${\forget}\circ\Theta\Rightarrow\forget$ of functors $\cat{ParSumCat}\to\cat{Cat}$.
\begin{proof}
Let us first show that $S_{\mathcal C}$ is fully faithful. If $\langle A,X_\bullet\rangle,\langle B,Y_\bullet\rangle\in\Theta(\mathcal C)$ are any two objects, then $f\mapsto [\phi|_B\circ(\kappa_B\times\id),f,\phi|_A\circ(\kappa_A\times\id)]$ defines a bijection $\Hom_{\mathcal C}((\phi|_A\circ(\kappa_A\times\id))_*(\kappa_A^*X_\bullet),(\phi|_B\circ(\kappa_B\times\id))_\bullet(\kappa_B^*Y_\bullet))\to\Hom_{\Sigma(\mathcal C)}(\kappa_A^*X_\bullet,\kappa_B^*Y_\bullet)$
by Lemma~\ref{lemma:free-to-choose-phi-psi}. However, the source of this equals $\Hom_{\mathcal C}((\phi|_A)_*(X_\bullet),(\phi|_B)_*(Y_\bullet))$ by Lemma~\ref{lemma:generalized-action}-$(\ref{item:lga-perm})$, while the target equals $\Hom_{\Theta(\mathcal C)}(\langle A,X_\bullet\rangle,\langle B,Y_\bullet\rangle)$ by definition of $\Phi$ (also see Remark~\ref{rk:theta-notation}), and this assignment is then by construction right inverse to
\begin{equation*}
S_{\mathcal C}\colon
\Hom_{\Theta(\mathcal C)}(\langle A,X_\bullet\rangle,\langle B,Y_\bullet\rangle)
\to\Hom_{\mathcal C}(S_{\mathcal C}\langle A,X_\bullet\rangle,S_{\mathcal C}\langle B,Y_\bullet\rangle).
\end{equation*}
We conclude that also the latter is bijective, i.e.~$S_{\mathcal C}$ is fully faithful.

To see that $S_{\mathcal C}$ is also essentially surjective, we let $X\in\mathcal C$ be arbitrary. Then $S_{\mathcal C}\langle\{1\},X\rangle=\phi(1,_\blank)_*X$ (where we confuse $X$ with the $\{1\}$-indexed family with unique value $X$), and the structure isomorphism $[\phi(1,\blank),\id]_X$ of the $E\mathcal M$-action on $\mathcal C$ shows that this is isomorphic to the original object $X$.

Finally, we have to show that $S_{\mathcal D}\circ\Theta(F)=F\circ S_{\mathcal C}$ for any morphism $F\colon\mathcal C\to\mathcal D$ of parsummable categories. Unravelling definitions, we see that $\Theta(F)$ is given on objects by sending $\langle A,X_\bullet\rangle$ to $\langle A, F(X_\bullet)\rangle$ with $F(X_\bullet)_a=F(X_a)$ for all $a\in A$, so the desired equality holds on objects by Remark~\ref{rk:universally-natural-trafo}.

On morphisms, $\Theta(F)$ is given by sending a morphism $\langle A,X_\bullet\rangle\to\langle B,Y_\bullet\rangle$ in $\Theta(\mathcal C)\subset\Phi\Sigma(\mathcal C)$ given by a morphism $[\rho,f,\theta]$ in $\Sigma(\mathcal C)$ (for suitable injections $\theta,\rho$) to $[\rho, Ff,\theta]$. Specializing to $\theta=\phi|_A\circ(\kappa_A\times\id)$ and $\rho=\phi|_B\circ(\kappa_B\times\id)$ then proves that $S_{\mathcal D}\circ\Theta(F)=F\circ S_{\mathcal C}$ also holds on morphisms.
\end{proof}
\end{lemma}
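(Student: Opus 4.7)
The plan is to verify three things: (1) $S_{\mathcal C}$ is fully faithful, (2) $S_{\mathcal C}$ is essentially surjective, and (3) the equality $S_{\mathcal D}\circ\Theta(F)=F\circ S_{\mathcal C}$ for every morphism $F\colon\mathcal C\to\mathcal D$ of parsummable categories. All three should reduce to bookkeeping with the identifications we have already established, so I expect no genuinely hard step; the only care required is keeping track of how the tautological reordering via the order-preserving bijections $\kappa_A\colon\{1,\dots,|A|\}\to A$ interacts with the injection $\phi\colon\omega\times\omega\rightarrowtail\omega$.

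For fully faithfulness, first unwind definitions: a morphism $\langle A,X_\bullet\rangle\to\langle B,Y_\bullet\rangle$ in $\Theta(\mathcal C)\subset\Phi\Sigma(\mathcal C)$ is by construction of $\Phi$ the same as a morphism $\kappa_A^*X_\bullet\to\kappa_B^*Y_\bullet$ in $\Sigma(\mathcal C)$. By Lemma~\ref{lemma:free-to-choose-phi-psi} applied with the specific injections $\phi|_A\circ(\kappa_A\times\id)$ and $\phi|_B\circ(\kappa_B\times\id)$, such morphisms are in bijection with morphisms $(\phi|_A\circ(\kappa_A\times\id))_*(\kappa_A^*X_\bullet)\to(\phi|_B\circ(\kappa_B\times\id))_*(\kappa_B^*Y_\bullet)$ in $\mathcal C$; Lemma~\ref{lemma:generalized-action}-$(\ref{item:lga-perm})$ rewrites the source and target as $(\phi|_A)_*(X_\bullet)=S_{\mathcal C}\langle A,X_\bullet\rangle$ and $(\phi|_B)_*(Y_\bullet)=S_{\mathcal C}\langle B,Y_\bullet\rangle$. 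By direct comparison of recipes, this bijection is precisely $S_{\mathcal C}$, proving full faithfulness.

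For essential surjectivity, I would simply observe that for any $X\in\mathcal C$ the object $\langle\{1\},X\rangle\in\Theta(\mathcal C)$ is sent by $S_{\mathcal C}$ to $\phi(1,\blank)_*(X)$, which is isomorphic to $X$ via the structure isomorphism $[1,\phi(1,\blank)]_X$ of the $E\mathcal M$-action on $\mathcal C$.

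For naturality, check the two equalities $S_{\mathcal D}\Theta(F)=FS_{\mathcal C}$ separately on objects and morphisms. On objects, $\Theta(F)\langle A,X_\bullet\rangle=\langle A,(F(X_a))_{a\in A}\rangle$ since $\Phi(F)$ and $\Sigma(F)$ apply $F$ componentwise; therefore $S_{\mathcal D}\Theta(F)\langle A,X_\bullet\rangle=(\phi|_A)_*(F(X_\bullet))$, which equals $F((\phi|_A)_*(X_\bullet))=FS_{\mathcal C}\langle A,X_\bullet\rangle$ by the fact that morphisms of parsummable categories preserve the generalized action $\phi_*$ (universal naturality, Remark~\ref{rk:universally-natural-trafo}). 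On morphisms, a morphism $\alpha\colon\langle A,X_\bullet\rangle\to\langle B,Y_\bullet\rangle$ is represented as $[\phi|_B\circ(\kappa_B\times\id),f,\phi|_A\circ(\kappa_A\times\id)]$ by construction of $S_{\mathcal C}$; $\Theta(F)$ replaces $f$ by $Ff$ and leaves the outer injections untouched, so $S_{\mathcal D}\Theta(F)(\alpha)=Ff=F(S_{\mathcal C}(\alpha))$. The potential subtlety here is checking that the canonical representative chosen to compute $S_{\mathcal D}$ on $\Theta(F)(\alpha)$ is indeed the image under $\Theta(F)$ of the canonical representative used for $\alpha$, but this is immediate because $\Theta(F)$ preserves the triple presentation.
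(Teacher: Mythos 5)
Your proof is correct and follows essentially the same route as the paper: full faithfulness via Lemma~\ref{lemma:free-to-choose-phi-psi} and Lemma~\ref{lemma:generalized-action}-$(\ref{item:lga-perm})$ with the specific injections $\phi|_A\circ(\kappa_A\times\id)$ and $\phi|_B\circ(\kappa_B\times\id)$, essential surjectivity via $\langle\{1\},X\rangle$ and a structure isomorphism of the $E\mathcal M$-action, and naturality by checking objects (via universal naturality of $(\phi|_A)_*$) and morphisms (by specializing the representative) separately. The only cosmetic differences are that you use the structure isomorphism $[1,\phi(1,\blank)]_X$ in the direction $\phi(1,\blank)_*X\to X$ where the paper uses its inverse, and that you describe the bijection $f\mapsto[\phi|_B\circ(\kappa_B\times\id),f,\phi|_A\circ(\kappa_A\times\id)]$ as ``precisely $S_{\mathcal C}$'' when it is more accurately the inverse of $S_{\mathcal C}$ on hom-sets; neither affects correctness.
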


Now we can finally prove our main result:

\begin{proof}[Proof of Theorem~\ref{thm:main-thm}]
By the above we have for any parsummable category $\mathcal C$ a natural sum-preserving equivalence of categories $S_{\mathcal C}\colon\Theta(\mathcal C)\to\mathcal C$, which gives rise to a functor $\cat{ParSumCat}\to\cat{PsArParSumCat}$. By Corollary~\ref{cor:cylinder-factorization} together with Corollary~\ref{cor:cylinder-natural}, we therefore get a natural zig-zag
\begin{equation*}
\Theta(\mathcal C)\xrightarrow{I^{(1)}} \Cyl(S_{\mathcal C}) \xleftarrow{I^{(2)}} \mathcal C
\end{equation*}
of morphisms of parsummable categories that are at the same time underlying equivalences of categories. Together with Lemma~\ref{lemma:Theta-vs-Phi-Sigma} we therefore obtain a natural zig-zag
\begin{equation}\label{eq:Phi-Sigma-vs-id}
\Phi\Sigma\Leftarrow\Theta\Rightarrow\Cyl(S_\bullet)\Leftarrow\id
\end{equation}
of levelwise underlying equivalences between endofunctors of $\cat{ParSumCat}$. We conclude by $2$-out-of-$3$ that $\Phi\Sigma$ is homotopical, hence so is $\Sigma$ because $\Phi$ reflects underlying equivalences by Lemma~\ref{lemma:Phi-reflect-preserve}.

On the other hand, the same lemma shows that $\Phi$ is homotopical, and with this established $(\ref{eq:Phi-Sigma-vs-id})$ precisely shows that $\Sigma$ is right homotopy inverse to $\Phi$, while Proposition~\ref{prop:T-equivalence} shows that it is also left homotopy inverse. This completes the proof of the theorem.
\end{proof}

\section{Outlook}\label{sec:outlook}
There is another notion of weak equivalence of parsummable categories that is interesting from the point of view of global homotopy theory and in particular in the context of global algebraic $K$-theory, and which we want to briefly discuss in this final section. For this we will need the following terminology:

\begin{defi}
A finite subgroup $G\subset\mathcal M$ is called \emph{universal} if the restriction of the tautological $\mathcal M$-action on $\omega$ to $G$ makes $\omega$ into a \emph{complete $G$-set universe}, i.e.~a (countable) $G$-set into which any finite $G$-set embeds equivariantly.
\end{defi}

Schwede \cite[Definition~2.16]{schwede-k-theory} uses the term `universal $G$-set' for a countable $G$-set $U$ such that each subgroup $H\subset G$ occurs as stabilizer of infinitely many elements of $U$; the equivalence to the above definition of a complete $G$-set universe is easy to check and also appears without proof as \cite[Proposition~2.17-(i)]{schwede-k-theory}.

It is not hard to check that any finite group $G$ is isomorphic to a universal subgroup of $\mathcal M$ and that any two such embeddings $G\hookrightarrow\mathcal M$ differ only by conjugation with an invertible element of $\mathcal M$.

\begin{defi}
A morphism $F\colon\mathcal C\to\mathcal D$ of parsummable categories is a \emph{global weak equivalence} if the induced functor $F^G\colon\mathcal C^G\to\mathcal D^G$ is a weak homotopy equivalence (i.e.~induces a homotopy equivalence on classifying spaces) for all universal $G\subset\mathcal M$.
\end{defi}

Schwede \cite[Definition~2.26]{schwede-k-theory} calls these `global equivalences,' but we prefer the above name as we want to emphasize that in general the global weak equivalences and the underlying equivalences studied in this article are incomparable.

In practice, however, one is usually interested in so-called saturated parsummable categories \cite[Definition~7.3]{schwede-k-theory}, for which the relationship between the two notions of weak equivalence is simpler:

\begin{defi}
A parsummable category $\mathcal C$ is called \emph{saturated} if the canonical inclusion $\mathcal C^G\hookrightarrow\mathcal C^{hG}\mathrel{:=}\Fun(EG,\mathcal C)^G$ of the honest fixed points into the homotopy fixed points is an equivalence of categories for all universal $G\subset\mathcal M$.
\end{defi}

As underlying equivalences induce equivalences on homotopy fixed points, any underlying equivalence of saturated parsummable categories is in particular a global weak equivalence, also see~\cite[Proposition~7.11]{schwede-k-theory}.

Our saturation construction appearing as \cite[Construction~7.23]{schwede-k-theory} (which we will not discuss here) provides a homotopy inverse with respect to the underlying equivalences to the inclusion of the full subcategory $\cat{ParSumCat}^{\text{sat}}$ spanned by the saturated parsummable categories, see \cite[Theorem~7.25]{schwede-k-theory}.

If $\mathscr C$ is a small permutative category, then $\Phi\mathscr C$ is usually not saturated, and to construct the global algebraic $K$-theory of $\mathscr C$ one first applies the saturation construction. Our main theorem above then implies that the resulting composition
\begin{equation}\label{eq:perm-sat}
\cat{PermCat}\xrightarrow\Phi\cat{ParSumCat}\xrightarrow{(\blank)^{\text{sat}}}\cat{ParSumCat}^{\text{sat}}
\end{equation}
is a homotopy equivalence with respect to the underlying equivalences. It follows formally that when we equip the right hand side with the global weak equivalences instead, then there is still a notion of weak equivalence on $\cat{PermCat}$ such that $(\ref{eq:perm-sat})$ is a homotopy equivalence---namely, those maps that are inverted by the above composition. It is not hard to check that these are precisely those strict symmetric monoidal functors whose underlying functors are global equivalences of categories in the sense of \cite[Definition~3.2]{schwede-cat}, i.e.~functors that induce weak homotopy equivalences on the categories of $G$-objects for all finite $G$.

This naturally leads to the question whether also the composition
\begin{equation}\label{eq:perm-sat-unsat}
  \cat{PermCat}\xrightarrow\Phi\cat{ParSumCat}\xrightarrow{(\blank)^{\text{sat}}}\cat{ParSumCat}^{\text{sat}}\hookrightarrow\cat{ParSumCat}
\end{equation}
is a homotopy equivalence with respect to the global weak equivalences, and building on Theorem~\ref{thm:main-thm} above we prove as one of the main results of \cite{sym-mon-global} that this is indeed the case.

Finally, $\Phi$ also sends underlying equivalences of small permutative categories to global weak equivalences by \cite[Proposition~11.7]{schwede-k-theory}, so one could once again ask if there is a coarser notion of weak equivalence on $\cat{PermCat}$ such that $\Phi$ becomes an equivalence of homotopy theories with respect to the global weak equivalences on $\cat{ParSumCat}$. This is not the case, however, as we will argue now:

Applying \cite[Construction~10.1]{schwede-k-theory} to the ring $\mathbb C$ of complex numbers yields a parsummable category $\mathcal C$ whose global algebraic $K$-theory is (by definition) the global algebraic $K$-theory of $\mathbb C$. Identifying $\mathbb Z/3$ with a universal subgroup of $\mathcal M$ isomorphic to it, \cite[proof of Theorem~10.3-(iii)]{schwede-k-theory} shows that $\pi_0(\mathcal C)\cong\mathbb N$ and $\pi_0(\mathcal C^{\mathbb Z/3})\cong\mathbb N^3$ as monoids; here we write $\pi_0$ for the path components of the classifying space (i.e.~two objects define the same element in $\pi_0$ iff they can be connected by a zig-zag of morphisms), and the left hand sides carry the natural commutative monoid structures induced from the sum operation (using \cite[Theorem~2.32]{schwede-k-theory} to choose disjointly supported representatives). From the point of view of $K$-theory, the standard generators correspond to the vector space $\mathbb C$ and the three isomorphism classes of irreducible complex representations of $\mathbb Z/3$, respectively.

We now claim:

\begin{prop}\label{prop:fixed-point-obstruction}
There exists no (small) permutative category $\mathscr D$ with $\pi_0(\Phi\mathscr D)\cong\mathbb N$ and $\pi_0\big((\Phi\mathscr D)^{\mathbb Z/3})\cong\mathbb N^3$.
\end{prop}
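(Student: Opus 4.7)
The plan is to argue by contradiction, exploiting a purely algebraic invariant of commutative monoids: the minimal number of generators, equivalently the rank of the group completion. Suppose for contradiction that such a $\mathscr D$ exists. Because the underlying category of $\Phi\mathscr D$ is equivalent to that of $\mathscr D$ (Lemma~\ref{lemma:Phi-reflect-preserve}) and the sum of two disjointly supported sequences in $\Phi\mathscr D$ corresponds under this equivalence to the tensor product in $\mathscr D$, the hypothesis forces $\pi_0(\mathscr D)\cong\mathbb N$ as commutative monoids under $\otimes$. Pick $d\in\mathscr D$ representing the generator. I will show that $\pi_0\bigl((\Phi\mathscr D)^{\mathbb Z/3}\bigr)$ is generated as a commutative monoid by at most two elements; since the group completion of $\mathbb N^3$ is $\mathbb Z^3$ of rank three, this is incompatible with any isomorphism $\pi_0\bigl((\Phi\mathscr D)^{\mathbb Z/3}\bigr)\cong\mathbb N^3$.

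Fix an embedding of $\mathbb Z/3$ into $\mathcal M$ as a universal subgroup, so that $\omega$ decomposes as a disjoint union of infinitely many $\mathbb Z/3$-fixed points and infinitely many free $\mathbb Z/3$-orbits. Any $X_\bullet\in(\Phi\mathscr D)^{\mathbb Z/3}$ has $\mathbb Z/3$-stable support, which thus partitions canonically as a finite set of fixed points together with finitely many free orbits; restricting $X_\bullet$ to each summand exhibits it as a disjointly-supported sum in $(\Phi\mathscr D)^{\mathbb Z/3}$ of \emph{atomic} objects, each supported on a single fixed point (``$X$-at-fp'') or a single orbit (``$Y$-at-orbit''). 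Hence every class in $\pi_0\bigl((\Phi\mathscr D)^{\mathbb Z/3}\bigr)$ is a non-negative integer combination of atomic classes, and it remains to see that each such class is a non-negative multiple of one of two distinguished generators $\alpha\mathrel{:=}[d\text{-at-fp}]$ and $\beta\mathrel{:=}[d\text{-at-orbit}]$.

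For a fixed-point atom, Lemma~\ref{lemma:support-change} implies that the structure isomorphism $[g,1]$ is the identity for every $g\in\mathbb Z/3$ (as $g$ fixes the singleton support pointwise), so the induced $\mathbb Z/3$-action on the underlying object $X\in\mathscr D$ is trivial; any zigzag $X\sim d^{\otimes n}$ witnessing $[X]=n$ in $\pi_0(\mathscr D)$ therefore lifts to a $\mathbb Z/3$-equivariant zigzag, and an arbitrary reordering isomorphism then identifies ``$d^{\otimes n}$-at-fp'' with a sum of $n$ copies of ``$d$-at-fp'' at distinct fixed points, giving $[X\text{-at-fp}]=n\alpha$. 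For an orbit atom the underlying object is $Y^{\otimes 3}$ with $\mathbb Z/3$ acting by the cyclic coherence isomorphism; writing $[Y]=m$, the zigzag $Y\sim d^{\otimes m}$ lifts equivariantly via $f\mapsto f^{\otimes 3}$ (by naturality of the symmetry $\tau$), which reduces us to comparing ``$d^{\otimes m}$-at-orbit'' with a sum of $m$ copies of ``$d$-at-orbit'' placed at $m$ pairwise disjoint orbits.

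This last equivariant comparison is the step I expect to be the main obstacle. The underlying objects in $\mathscr D$ are $(d^{\otimes m})^{\otimes 3}$ and $\bigotimes_{k=1}^{m}d^{\otimes 3}$, i.e.\ the tensor power over the two natural total orderings of $\{1,2,3\}\times\{1,\dots,m\}$; calling $\sigma$ the shuffle between them, one must verify that the associated coherence isomorphism intertwines ``cyclic on the three blocks of length $m$'' with ``cyclic within each of the $m$ blocks of length $3$''. By Theorem~\ref{thm:permutative-operadic-description} both actions are realised by explicit permutations of $\{1,\dots,3m\}$ acting through the $E\Sigma_*$-algebra structure, and the equivariance reduces to the elementary combinatorial identity that these two permutations are conjugate via $\sigma$. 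Granting this, every class in $\pi_0\bigl((\Phi\mathscr D)^{\mathbb Z/3}\bigr)$ is of the form $n\alpha+m\beta$, yielding the required contradiction.
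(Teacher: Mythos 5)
Your argument is correct and follows essentially the same route as the paper: both decompose $\mathbb Z/3$-fixed objects of $\Phi\mathscr D$ into atoms supported on single orbits (one for each of the two transitive $\mathbb Z/3$-set types) and show this forces $\pi_0\bigl((\Phi\mathscr D)^{\mathbb Z/3}\bigr)$ to be generated by at most two elements, which is incompatible with $\mathbb N^3$. The paper packages this as surjectivity of a monoid homomorphism $a_G$ out of $\pi_0(\mathscr D)^{\times 2}\cong\mathbb N^2$, and the coherence--equivariance check you flag as the main obstacle is precisely the computation the paper performs to show $a_G$ is a homomorphism (comparing the lexicographic orders on $\bm 2\times S$ and $S\times\bm2$), so you have correctly isolated and reduced the only non-formal step.
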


Before we prove the proposition, let us observe that it immediately implies that $\Phi$ is not essentially surjective after passing to the (say, $1$-categorical) localization of $\cat{ParSumCat}$ at the global weak equivalences as both $\pi_0$ and $\pi_0((\blank)^{\mathbb Z/3})$ invert global weak equivalences, so that they pass to functors $\Ho(\cat{ParSumCat})\to\cat{CMon}$. In particular, there is no notion of weak equivalence of small permutative categories so that $\Phi$ induces an equivalence of the corresponding (ordinary or $\infty$-categorical) localizations.

The proof of the proposition will basically be an elaboration on \cite[Proposition~11.9]{schwede-k-theory}. For this let us fix a small permutative category $\mathscr D$.

\begin{constr}
For any $X\in\mathscr D$ and any finite subset $S\subset\omega$ we write $\Pi(X,S)$ for the object of $\Phi\mathscr D$ with
\begin{equation*}
\Pi(X,S)_i=\begin{cases}
  X & \text{if $i\in S$}\\
  \textbf{1} & \text{otherwise}.
\end{cases}
\end{equation*}
We will now define for any universal $G\subset\mathcal M$ a homomorphism
\begin{equation*}
a_G\colon\prod_{\text{isom. classes of transitive $G$-sets}}\pi_0(\mathscr D)\to \pi_0((\Phi\mathscr D)^G)
\end{equation*}
of commutative monoids; here the monoid structure on $\pi_0(\mathscr D)$ is the one given by the tensor product and the monoid structure on the right is again induced by the sum.

As the above finite product is also a coproduct in the category of commutative monoids, it suffices to define $a_G$ on each factor separately. This is done as follows: given an isomorphism class of transitive $G$-sets, we pick a subset $S\subset\omega$ representing it; this is possible because $\omega$ is a universal $G$-set by assumption on $G$. We then send an element of $\pi_0(\mathscr D)$ represented by an object $X\in\mathscr D$ to the class $[\Pi(X,S)]$.
\end{constr}

\begin{lemma}
The above is a well-defined monoid homomorphism.
\begin{proof}
It suffices to show that the restriction of $a_G$ to each factor is well-defined and a homomorphism.

For this we fix $X\in\mathscr D$ and an isomorphism class of transitive $G$-sets. It is clear that $\Pi(X,S)$ is $G$-fixed for \emph{any} finite $G$-subset $S\subset\omega$, hence in particular for $S$ in the given isomorphism class. If $f\colon X\to Y$ is any morphism in $\mathscr D$, then the morphism $\bigotimes_{s\in S}f$ of $\mathscr D$ defines a map $\Pi(X,S)\to\Pi(Y,S)$ in $\Phi\mathscr D$, and we claim that this is $G$-fixed. Indeed, by construction together with Lemma~\ref{lemma:shuffle-coherence}, $g\in G$ acts by conjugation with the coherence isomorphism associated to the self-bijection $g.\blank\colon S\to S$; while these may be non-trivial, this means purely by naturality that $g$ sends a morphism $\Pi(X,S)\to\Pi(Y,S)$ of the form $\bigotimes_{s\in S}f_s$ to $\bigotimes_{s\in S}f_{g^{-1}s}$, so that it in particular preserves the above map, i.e.~$a_G$ is independent of the choice of representative in $\mathscr D$.

To finish the proof that $a_G$ is well-defined it therefore suffices that $\Pi(X,S)\cong\Pi(X,T)$ in $(\Phi\mathscr D)^G$ for any finite $G$-subsets $S,T\subset\omega$ that are $G$-equivariantly isomorphic (in fact, already a zig-zag of maps between $\Pi(X,S)$ and $\Pi(X,T)$ would have been enough). Indeed, if $\alpha\colon S\to T$ is any $G$-equivariant bijection, then the coherence isomorphism associated to $\alpha$ yields an isomorphism $\Pi(X,S)\cong\Pi(X,T)$ in $\Phi\mathscr D$, which we claim to be $G$-fixed. But as $\alpha$ commutes with $g.\blank$ for all $g\in G$ by definition, this is simply an instance of the transitivity of coherence isomorphisms (i.e.~if $\beta,\gamma$ are composable bijections of finite totally ordered sets, then the coherence isomorphism associated to $\beta\circ\gamma$ is the composite of the coherence isomorphisms associated to $\beta$ and $\gamma$, respectively).

Finally, for the proof that $a_G$ is a homomorphism when restricted to our fixed factor, we let $X^{(1)},X^{(2)}\in\mathscr D$ be any objects, and we let $S^{(1)},S^{(2)}\subset\omega$ be finite $G$-subsets in our fixed isomorphism class; we want to show that $[\Pi(X^{(1)},S^{(1)})]+[\Pi(X^{(2)},S^{(2)})]=[\Pi(X^{(1)}\otimes X^{(2)}),S^{(1)}]$ in $\pi_0((\Phi\mathscr D)^G)$. As one sees for example from the alternative definition of complete $G$-set universes in terms of stabilizers, taking out a finite $G$-set from $\omega$ still leaves a complete $G$-set universe, and applying this to the $G$-set generated by all $s\le\max S^{(1)}$, we may assume without loss of generality that $s^{(1)}<s^{(2)}$ for all $s^{(1)}\in S^{(1)},s^{(2)}\in S^{(2)}$, in which case an isomorphism $\Pi(X^{(1)},S^{(1)})+\Pi(X^{(2)},S^{(2)})\cong\Pi(X^{(1)}\otimes X^{(2)},S^{(1)})$ in $(\Phi\mathscr D)^G$ amounts to a $G$-equivariant isomorphism
\begin{equation*}
\bigotimes_{s\in S^{(1)}} X^{(1)}\otimes\bigotimes_{s\in S^{(2)}}X^{(2)} \cong \bigotimes_{s\in S^{(1)}}(X^{(1)}\otimes X^{(2)})
\end{equation*}
in $\mathscr D$ with respect to the $G$-action coming from the coherence isomorphisms as before.

For this we fix a $G$-equivariant bijection $\alpha\colon S^{(2)}\to S^{(1)}$ again. Then the above calculation shows that the coherence isomorphism associated to $\alpha$ yields a $G$-equivariant isomorphism $\bigotimes_{s\in S^{(2)}}X^{(2)}\cong\bigotimes_{s\in S^{(1)}}X^{(2)}$, so it suffices to construct a $G$-equivariant isomorphism $\bigotimes_{s\in S^{(1)}} X^{(1)}\otimes\bigotimes_{s\in S^{(1)}}X^{(2)}\cong\bigotimes_{s\in S^{(1)}}(X^{(1)}\otimes X^{(2)})$.

But if we equip the sets $\bm2\times S^{(1)}$ and $S^{(1)}\times\bm2$ with the lexicographical orders, then this is the same as a $G$-equivariant isomorphism
\begin{equation*}
\bigotimes_{(i,s)\in\bm2\times S^{(1)}} X^{(i)}\cong\bigotimes_{(s,i)\in S^{(1)}\times\bm2} X^{(i)}
\end{equation*}
where $G$ acts on both sides via the coherence isomorphisms associated to its action on $S^{(1)}$. The canonical isomorphism $\bm2\times S^{(1)}\cong S^{(1)}\times\bm2$ switching the factors is $G$-equivariant, so the same argument as before shows that the associated coherence isomorphism has the desired properties.
\end{proof}
\end{lemma}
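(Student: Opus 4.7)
The plan is to exploit the monoid homomorphism $a_{\mathbb Z/3}$ constructed in the preceding lemma and derive a contradiction by a rank argument. Specifically, I would prove that under the hypotheses of the proposition, $a_{\mathbb Z/3}$ must be a surjection of commutative monoids from $\mathbb N^2$ onto $\mathbb N^3$, and then obtain the contradiction by Grothendieck group completion.

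First I would observe that $\mathbb Z/3$ has exactly two conjugacy classes of subgroups, so exactly two isomorphism classes of transitive $\mathbb Z/3$-sets: the one-point set and the regular orbit. Using Lemma~\ref{lemma:Phi-reflect-preserve} to identify $\pi_0(\mathscr D)\cong\pi_0(\Phi\mathscr D)\cong\mathbb N$, the domain of $a_{\mathbb Z/3}$ becomes $\mathbb N\oplus\mathbb N=\mathbb N^2$, and by hypothesis its codomain is $\mathbb N^3$.

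The main step — and the principal obstacle — is to verify that $a_{\mathbb Z/3}$ is surjective. For this, I would argue that every $X_\bullet\in(\Phi\mathscr D)^{\mathbb Z/3}$ decomposes canonically as a sum over its $\mathbb Z/3$-orbits. Indeed, being $\mathbb Z/3$-fixed forces the finite support $\supp(X_\bullet)\subset\omega$ to be $\mathbb Z/3$-stable and forces $X_\bullet$ to be constant on each $\mathbb Z/3$-orbit. Writing $\supp(X_\bullet)=O_1\sqcup\cdots\sqcup O_k$ as a disjoint union of $\mathbb Z/3$-orbits and letting $X^{(j)}$ denote the common value of $X_i$ for $i\in O_j$, one reads off from the definition of $\Phi$ and of the sum in $\Phi\mathscr D$ that
\begin{equation*}
X_\bullet=\Pi(X^{(1)},O_1)+\cdots+\Pi(X^{(k)},O_k)
\end{equation*}
as objects of $\Phi\mathscr D$, with the summands $\mathbb Z/3$-fixed and pairwise disjointly supported. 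Each summand $\Pi(X^{(j)},O_j)$ represents the same class in $\pi_0((\Phi\mathscr D)^{\mathbb Z/3})$ as $a_{\mathbb Z/3}$ evaluated at $[X^{(j)}]$ in the factor indexed by the isomorphism class of $O_j$ — this is precisely the coherence-isomorphism argument already given in the proof of well-definedness of $a_{\mathbb Z/3}$. Since $a_{\mathbb Z/3}$ is a monoid homomorphism, $[X_\bullet]$ then lies in its image.

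Granted surjectivity, the hypothetical $\mathscr D$ would yield a surjective homomorphism of commutative monoids $\mathbb N^2\twoheadrightarrow\mathbb N^3$. Applying the Grothendieck group completion functor, which is a left adjoint and therefore preserves epimorphisms, we would obtain a surjection $\mathbb Z^2\twoheadrightarrow\mathbb Z^3$ of abelian groups, contradicting the well-definedness of rank. Hence no such $\mathscr D$ can exist. The remaining steps beyond surjectivity are all either elementary enumeration (counting isomorphism classes of transitive $\mathbb Z/3$-sets) or completely formal (group completion); the only real content is the rigidity of objects in $\Phi\mathscr D$, which reduces arbitrary $\mathbb Z/3$-fixed objects to sums of $\Pi$-type objects and thereby forces them into the image of $a_{\mathbb Z/3}$.
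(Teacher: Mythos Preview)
Your proposal addresses the wrong statement. The statement under consideration is the lemma asserting that $a_G$ is a well-defined monoid homomorphism; your write-up instead \emph{assumes} this lemma (``the monoid homomorphism $a_{\mathbb Z/3}$ constructed in the preceding lemma'') and proceeds to prove Proposition~\ref{prop:fixed-point-obstruction}. You have supplied no argument for the three things actually at stake in the lemma: that $[\Pi(X,S)]$ is independent of the choice of representative $X$ in its $\pi_0$-class, that it is independent of the choice of $G$-subset $S$ within its isomorphism class, and that the restriction to each factor is additive. The paper's proof handles each of these via explicit $G$-fixed coherence isomorphisms in $\Phi\mathscr D$ (using naturality for the first, transitivity of coherence isomorphisms for the second, and a lexicographic reindexing $\bm2\times S^{(1)}\cong S^{(1)}\times\bm2$ for the third).

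As a proof of the \emph{Proposition}, what you wrote is essentially the paper's own argument: the surjectivity of $a_{\mathbb Z/3}$ via the orbit decomposition of $\supp(X_\bullet)$ is exactly how the paper proceeds, and your rank contradiction after group completion is a perfectly valid variant of the paper's ``no surjection $\mathbb N^2\twoheadrightarrow\mathbb N^3$''. One small quibble: invoking Lemma~\ref{lemma:Phi-reflect-preserve} for $\pi_0(\mathscr D)\cong\pi_0(\Phi\mathscr D)$ gives only a bijection of sets a priori; you need it as an isomorphism of \emph{monoids}, which is what the paper obtains by exhibiting $a_1$ as a monoid map with a one-sided inverse.
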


\begin{proof}[Proof of Proposition~\ref{prop:fixed-point-obstruction}]
We will show that $a_G$ is surjective for all universal $G\subset\mathcal M$ and that $a_1$ is bijective. This leads to the desired contradiction as a small permutative category $\mathscr D$ with $\pi_0(\Phi\mathscr D)\cong\mathbb N$ and $\pi_0(\Phi(\mathscr D)^{\mathbb Z/3})\cong\mathbb N^3$ would then yield a surjective homomorphism
\begin{equation*}
\begin{tikzcd}
\mathbb N^2\cong\pi_0(\Phi\mathscr D)^{\times2}\arrow[r, "(a_1^{-1})^{\times2}", "\cong"'] & \pi_0(\mathscr D)^{\times2}\arrow[r, "a_{\mathbb Z/3}", two heads]&\pi_0\big((\Phi\mathscr D)^{\mathbb Z/3}\big)\cong\mathbb N^3,
\end{tikzcd}
\end{equation*}
which is impossible; here we used that there are precisely two transitive $\mathbb Z/3$-sets up to isomorphism.

To prove that $a_G$ is surjective, we observe that the support of any $G$-fixed object $X_\bullet\in\Phi\mathscr D$ is a (finite) $G$-subset of $\omega$, and that $X_\bullet$ is constant on its $G$-orbits. Thus, if we decompose $\supp X_\bullet=S_1\sqcup\cdots\sqcup S_k$ into disjoint orbits and pick orbit representatives $s_i\in S_i$, $i=1,\dots,k$, then $X_\bullet$ is actually equal in $(\Phi\mathscr D)^G$ to the sum $\Pi(X_{s_1},S_1)+\cdots+\Pi(X_{s_k},S_k)$, and in particular $[X_\bullet]=[\Pi(X_{s_1},S_1)]+\cdots+[\Pi(X_{s_k},S_k)]$ in $\pi_0((\Phi\mathscr D)^G)$. By construction, each $[\Pi(X_{s_i},S_i)]$ is contained in the image of $a_G$, hence so is $[X_\bullet]$ as $a_G$ is a monoid homomorphism.

Finally, to show that $a_1$ is also injective, it suffices to observe that it admits a left inverse induced by $X_\bullet\mapsto\bigotimes_{i\in\omega}X_i$.
\end{proof}

\begin{rk}
We remark that after passing to group completions the same argument shows that there is no (small) permutative category $\mathscr D$ such that the global algebraic $K$-theory of $\Phi\mathscr D$ is equivalent to the global algebraic $K$-theory $\textbf{K}_{\textup{gl}}(\mathbb C)$ of the complex numbers. On the other hand, there is indeed a small permutative category $\mathscr D$ such that the \emph{saturation} $(\Phi\mathscr D)^{\text{sat}}$ is globally weakly equivalent to the parsummable category $\mathcal C$ considered above, and in particular the global algebraic $K$-theory of $(\Phi\mathscr D)^{\text{sat}}$ (which by definition is the global algebraic $K$-theory of the permutative category $\mathscr D$) is equivalent to the global algebraic $K$-theory of $\mathbb C$; thus, the above example does not contradict the global version of Thomason's theorem announced in the introduction. In fact, it is not hard to show that $\mathscr D$ can be taken to be any small permutative category equivalent to the symmetric monoidal category of finite dimensional $\mathbb C$-vector spaces and $\mathbb C$-linear isomorphisms under $\oplus$. We remark that the mere existence of such $\mathscr D$ also follows from $(\ref{eq:perm-sat-unsat})$ being a homotopy equivalence (which we did not prove here) or, once one observes that $\mathcal C$ is saturated \cite[Theorem 10.3-(i)]{schwede-k-theory}, from $(\ref{eq:perm-sat})$ being a homotopy equivalence.
\end{rk}

\frenchspacing
\bibliographystyle{alpha}
\bibliography{literature.bib}

\begin{thebibliography}{GMMO20}

\bibitem[B{\'e}n63]{cat-avec-mult}
Jean B{\'e}nabou.
\newblock {Categories avec multiplication.}
\newblock {\em {C. R. Acad. Sci., Paris}}, 256:1887--1890, 1963.

\bibitem[GMMO20]{gmmo}
Bertrand~J. {Guillou}, J.~Peter {May}, Mona {Merling}, and Ang\'elica~M.
  {Osorno}.
\newblock {Symmetric monoidal \(G\)-categories and their strictification}.
\newblock {\em {Q. J. Math.}}, 71(1):207--246, 2020.

\bibitem[KS15]{joyal-strict}
Dimitar {Kodjabachev} and Steffen {Sagave}.
\newblock {Strictly commutative models for \(E_\infty\) quasi-categories.}
\newblock {\em {Homology Homotopy Appl.}}, 17(1):121--128, 2015.

\bibitem[{Len}20a]{g-global}
Tobias {Lenz}.
\newblock {\(G\)-global homotopy theory and algebraic \(K\)-theory}.
\newblock {Preprint, \texttt{arxiv:2012.12676}}, 2020.

\bibitem[{Len}20b]{sym-mon-global}
Tobias {Lenz}.
\newblock {On the global homotopy theory of symmetric monoidal categories}.
\newblock {Preprint, \texttt{arXiv:2009.07004}}, 2020.

\bibitem[{Mac}63]{nat-assoc-com}
Saunders {Mac Lane}.
\newblock {Natural associativity and commutativity}.
\newblock {\em Rice Univ. Studies}, 49:28--46, 1963.

\bibitem[{Mac}98]{cat-working}
Saunders {Mac Lane}.
\newblock {\em {Categories for the working mathematician (2nd ed.)}}, volume~5
  of {\em {Grad. Texts Math.}}
\newblock New York, NY: Springer, 2nd ed edition, 1998.

\bibitem[{May}74]{may-permutative}
J.~Peter {May}.
\newblock {E\(_\infty\) spaces, group completions, and permutative categories.}
\newblock In {\em {New developments in topology. The edited and revised
  proceedings of the symposium on algebraic topology, Oxford, June 1972}}.
  Cambridge University Press, 1974.

\bibitem[{Pow}89]{power-coherence}
A.~John {Power}.
\newblock {A general coherence result}.
\newblock {\em {J. Pure Appl. Algebra}}, 57(2):165--173, 1989.

\bibitem[{Sch}08]{schwede-semistable}
Stefan {Schwede}.
\newblock {On the homotopy groups of symmetric spectra.}
\newblock {\em {Geom. Topol.}}, 12(3):1313--1344, 2008.

\bibitem[{Sch}19a]{schwede-cat}
Stefan {Schwede}.
\newblock {Categories and orbispaces.}
\newblock {\em {Algebr.~Geom.~Top.}}, 19(6):3171--3215, 2019.

\bibitem[{Sch}19b]{schwede-k-theory}
Stefan {Schwede}.
\newblock {Global algebraic \(K\)-theory.}
\newblock {Preprint, available as \url{arxiv:1912.08872}.}, 2019.

\bibitem[SS12]{sagave-schlichtkrull}
Steffen {Sagave} and Christian {Schlichtkrull}.
\newblock {Diagram spaces and symmetric spectra.}
\newblock {\em {Adv. Math.}}, 231(3-4):2116--2193, 2012.

\bibitem[SS16]{solberg-schlichtkrull}
Christian {Schlichtkrull} and Mirjam {Solberg}.
\newblock {Braided injections and double loop spaces.}
\newblock {\em {Trans. Am. Math. Soc.}}, 368(10):7305--7338, 2016.

\bibitem[SS20]{I-vs-M-1-cat}
Steffen {Sagave} and Stefan {Schwede}.
\newblock {Homotopy invariance of convolution products.}
\newblock {To appear in \textit{Int. Math. Res. Not.}, published online
  \url{doi:10.1093/imrn/rnz334}.}, 2020.

\bibitem[{Tho}95]{thomason}
R.~W. {Thomason}.
\newblock {Symmetric monoidal categories model all connective spectra.}
\newblock {\em {Theory Appl. Categ.}}, 1:78--118, 1995.

\end{thebibliography}
\end{document}